\newtheorem{thm}{Theorem}[section]
\newtheorem{yl}[thm]{Lemma}
\newtheorem{mt}[thm]{Proposition}
\newtheorem{zj}{Remark}[section]
\newtheorem{dy}{Definition}[section]
	\theoremstyle{nonumberplain}
	\newtheorem{proof}{Proof}
	\newtheorem{remark}{Remark}[section]
\begin{document}
	\title{\bf   Stability and energy identity for Yang-Mills-Higgs pairs}
	\date{}
    \author{\sffamily Xiaoli Han$^1$, Xishen Jin$^2$, Yang Wen$^{3^*}$\\
    	{\sffamily\small $^1$Math department of Tsinghua university, Beijing, 100084, China\\hanxiaoli@mail.tsinghua.edu.cn}\\
        {\sffamily\small $^2$School of Mathematics, Remin University of China, Beijing, 100872, China\\jinxishen@ruc.edu.cn}\\
    	{\sffamily\small $^{3^*}$Academy of Mathematics and Systems Science, The Chinese Academy of Science, Beijing, 100190, China \\wenyang19@mails.ucas.ac.cn}\\
       {\sffamily\small $^*$Corresponding author.}}

	\maketitle
	{\noindent\small{\bf Abstract:}
	%In this paper, we consider the properties of weakly stable Yang-Mills-Higgs pairs $\left( \nabla ,u \right)$ on $S^n (n  \geq  4)$. We prove that $\nabla$ is Yang-Mills connection and $d^\nabla u=0, |u|=1$ while $n=4$ and $R^\nabla=0, d^\nabla u=0, |u|=1$ while $n\geq 5$. We also prove the energy identity for a sequence of Yang-Mills-Higgs pairs on a $4-$dimensional compact manifold. %
	\renewcommand{\thefootnote}{}
	\footnotetext{This is the revised version submitted on 2023.1.7}In this paper, we study the properties of the critical points of Yang-Mills-Higgs functional, which are called Yang-Mills-Higgs pairs. We first consider the properties of weakly stable Yang-Mills-Higgs pairs on a vector bundle over $S^n$ $(n\ge4)$. When $n \geq 4$, we prove that the norm of its Higgs field is $1$ and the connection is actually Yang-Mills. More precisely, its curvature vanishes when $n\geq 5$. We also use the bubble-neck decomposition to prove the energy identity of a sequence of Yang-Mills-Higgs pairs over a $4-$dimensional compact manifold with uniformly bounded energy. We show there is a subsequence  converges smoothly to a Yang-Mills-Higgs pair up to gauge modulo finitely many $4-$dimensional spheres with Yang-Mills connections.

 }
	
	\vspace{1ex}
	{\noindent\small{\bf Keywords:}
		Yang-Mills-Higgs pairs}
	\section{Introduction}\
	
  Let $\left( M,g \right) $ be an $n$-dimensional closed Riemannian  manifold and $E$ be a vector bundle of rank $r$ over $M$ with structure group $G$, where $G$ is a compact Lie group. Let $\mathfrak{g}_E$ be the adjoint bundle of $E$. The classical Yang-Mills functional defined on the space of connections of $E$ is given by
	\[YM\left( \nabla  \right)=\int_M |R^\nabla|^2dV \]
	where $\nabla$ is a connection on $E$, $R^\nabla$ denotes its curvature and $dV $ is the volume form of $g$. We denote $d^\nabla $ to be the exterior differential induced by $\nabla$ and $\delta^\triangledown$ be the formal adjoint of $d^\triangledown$. The critical points of Yang-Mills functional are called  Yang-Mills connections  and they satisfy
		\[\delta^\nabla R^\nabla =0. \]
	Our interests are  the Yang-Mills connections which minimize the Yang-Mills functional locally. At such a connection $\nabla$, the second variation of the Yang-Mills functional should be non-negative, i.e.
\[\left.\dfrac{d^2 }{dt^2 }YM\left( \nabla^t \right)\right|_{ t=0 }  \ge  0\]
where $\nabla^t $ is a curve of connections with $\nabla^0 =\nabla$. Such connections are called stable. Considering the second variation of the Yang-Mills functional with respect to deformations generated by special vector fields, J. Simons announced that every stable Yang-Mills connection on $S^n $ is flat, if $n > 4$ in Tokyo in September of 1977. Bourguignon-Lawson \cite{JL} gave a detailed proof of this result.

The Ginzburg-Landau equations are the Euler-Lagrange equation of the Ginzburg-Landau functional 
\[E_\varepsilon \left( u \right) =\int_M \left( \frac{\left|\nabla u\right|^2 }{2}+\frac{\left( 1-\left| u \right|^2 \right)^2 }{4\varepsilon^2 }  \right) dV \]
where $u$ is a complex-valued function on $M$. 
In \cite{Ch}, Cheng proved that every stable solutions of the Ginzburg-Landau equation on $S^n $ for $n \geq 2$ are the constant with absolute value $1$.

In this paper, we consider the following Yang-Mills-Higgs functional with self-interaction parameter $\lambda \geq 0$ as a combination of the Yang-Mills functional and Ginzburg-Landau functional 
\begin{equation}\label{YMH-End}
	\mathscr{A}(\nabla,u)=\frac12\int_M|R^\triangledown|^2+|d^\triangledown u|^2+\frac\lambda4(1-|u|^2)^2dV
\end{equation}
where $u \in \Omega^0 ( E) $ is a Higgs field. The Yang-Mills-Higgs functional on $ \mathbb{R}^3 $ with structure group $SU\left( 2 \right) $ was first introduced by P. Higgs in \cite{PHiggs}. A. Jaffe-C. Taubes \cite{JT} extended the Yang-Mills-Higgs functional to $\mathbb{R}^n $ and also general manifolds. Its critical point is the so-called magnetic monopole (we also call it a Yang-Mills-Higgs pair), i.e. a pair $\left( \nabla,u \right)$ satisfying
    \begin{equation}\label{YMH}
	\begin{split}
		\delta^\triangledown R^\triangledown&=-\frac12(d^\triangledown u\otimes u^\ast-u\otimes(d^\triangledown u)^\ast),\\
		\delta^\triangledown d^\triangledown u&=\frac\lambda2(1-|u|^2)u,
	\end{split}
\end{equation}
where for any $u\in\Omega^0(E)$ and $\phi\in\Omega^p(E)$, $\frac12(u\otimes\phi^\ast-\phi\otimes u^\ast)\in\Omega^p(\mathfrak{g}_E)$  such that for any $\varphi\in\Omega^p(\mathfrak{g}_E)$, we have $\langle \frac12(u\otimes\phi^\ast-\phi\otimes u^\ast),\varphi\rangle =-\langle \phi,\varphi u\rangle $.

Similar to the stable Yang-Mills connection, a Yang-Mills-Higgs pair $\left( \nabla,u \right)$ is called stable if for any curve $\left( \nabla^t,u^t \right) $ such that $\nabla^0 =\nabla $ and $u^0 =u $, there holds
\begin{equation}
	\left.\frac{d^2}{dt^2}\mathscr{A}(\nabla^t,u^t)\right|_{t=0}\ \geq 0.
\end{equation} 
Furthermore, we can define the notation of the weakly stable of Yang-Mills-Higgs pairs (c.f. Definition \ref{weakly stable}). The purpose of the present work is to extend some of the results of J. Simons and Bourguignon-Lawson \cite{JL} about weakly stable Yang-Mills connections on $S^n $ to the weakly stable Yang-Mills-Higgs pairs. We prove the following theorem.

	\begin{thm}\label{mainthm1}
	Assume $(\nabla,u)$ is a weakly stable Yang-Mills-Higgs pairs on $S^n$ , then
	\begin{enumerate}
		\item If $n\ge 5$, then $R^\triangledown=0$, $d^\triangledown u=0$ and  $|u|=1$.
		\item 	If $n=4$, then $d^\triangledown u=0$, $|u|=1$ and $\nabla$ is a Yang-Mills connection (i.e. $\delta^\triangledown R^\triangledown=0$).
	\end{enumerate}
    \end{thm}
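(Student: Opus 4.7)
The plan is to adapt the Simons--Bourguignon-Lawson conformal variation argument to the Yang-Mills-Higgs setting. First I would derive the explicit second variation formula for $\mathscr{A}$ at a critical pair $(\nabla,u)$: for an infinitesimal variation $(A,\psi)\in \Omega^1(\mathfrak{g}_E)\times \Omega^0(E)$, the Hessian $Q(A,\psi)$ decomposes as the classical Yang-Mills Hessian in $A$, the Higgs Hessian in $\psi$ (including the $\lambda$-dependent terms and the coupling with $|u|^2$), and a mixed cross term produced by the coupling of the equations \eqref{YMH}.

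For each $v\in\mathbb{R}^{n+1}$ I would then build a test variation from the conformal Killing field $V_v(x)=v-\langle v,x\rangle x$ on $S^n\subset\mathbb{R}^{n+1}$, setting
\[
A_v:=\iota_{V_v}R^\nabla \in \Omega^1(\mathfrak{g}_E), \qquad \psi_v:=\nabla_{V_v}u\in\Omega^0(E).
\]
These are precisely the infinitesimal variations induced by the conformal flow of $V_v$ after a suitable gauge choice, and by the definition of weak stability (Definition~\ref{weakly stable}) one has $Q(A_v,\psi_v)\geq 0$ for every $v$. The central computation is then to evaluate the sum $\sum_{i=0}^{n} Q(A_{e_i},\psi_{e_i})$ over an orthonormal basis $\{e_i\}$ of $\mathbb{R}^{n+1}$. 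Using the standard trace identities for these conformal Killing fields on the round sphere (in particular $\sum_i V_{e_i}\otimes V_{e_i}$ equals the inverse metric), together with integration by parts and the Yang-Mills-Higgs equations, one expects the sum to collapse to an intrinsic expression of the schematic form
\[
\sum_i Q(A_{e_i},\psi_{e_i}) \,=\, (4-n)\int_{S^n}|R^\nabla|^2\,dV \,-\, \alpha\int_{S^n}|d^\nabla u|^2\,dV \,-\, \beta\int_{S^n}(1-|u|^2)^2\,dV,
\]
for explicit constants $\alpha,\beta\geq 0$ that are strictly positive for $n\geq 4$.

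Once this identity is established the theorem follows. The left side is non-negative by weak stability while the right side is non-positive, so each integral on the right must vanish. For $n\geq 5$ the coefficient $(4-n)<0$ forces $R^\nabla=0$, $d^\nabla u=0$ and $|u|=1$. For $n=4$ the Yang-Mills contribution drops out by conformal invariance, but the Higgs terms still force $d^\nabla u=0$ and $|u|=1$; substituting back into the first equation of \eqref{YMH} then gives $\delta^\nabla R^\nabla=0$, so $\nabla$ is Yang-Mills. The main obstacle is the algebraic bookkeeping in the summation step: after expanding $Q(A_v,\psi_v)$ one obtains a lengthy pointwise expression with many cross terms (for instance involving $\langle R^\nabla\cdot u,\nabla_V u\rangle$), and showing that these combine cleanly after the $\mathbb{R}^{n+1}$-summation and successive integrations by parts via the Bianchi identity and \eqref{YMH} will require careful manipulation. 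A secondary subtlety is the case $\lambda=0$, where $\beta=0$ and the conclusion $|u|=1$ must be pinned down by a supplementary variation, for instance a scaling $u_t=(1+t)u$.
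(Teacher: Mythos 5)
Your overall strategy coincides with the paper's: the same conformal Killing fields $V_v=v-(v\cdot x)x$, the same test variations $(\iota_{V_v}R^\nabla,\nabla_{V_v}u)$ (which the paper checks lie in the admissible space $\mathcal{C}$), and the same trace over an orthonormal basis of $\mathbb{R}^{n+1}$. The one step that would fail as written is your claim that the traced Hessian contains a term $-\beta\int_{S^n}(1-|u|^2)^2\,dV$ with $\beta>0$ for $n\ge 4$. It does not: the potential enters each individual second variation only through the term $-2f_v\langle\delta^\nabla d^\nabla u,\nabla_{V_v}u\rangle$, and all such terms cancel in the sum because $\sum_i f_{v_i}V_{v_i}=0$ on $S^n$. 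The correct identity is
\[
\sum_{i=1}^{n+1}Q(A_{v_i},\psi_{v_i})=2(4-n)\int_{S^n}|R^\nabla|^2\,dV+(2-n)\int_{S^n}|d^\nabla u|^2\,dV,
\]
with no potential term, so weak stability yields only $R^\nabla=0$ (for $n\ge5$) and $d^\nabla u=0$ (for $n\ge4$) --- it does not by itself yield $|u|=1$.

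Consequently the conclusion $|u|=1$ needs a separate argument for \emph{every} $\lambda>0$, not only in the degenerate case $\lambda=0$ that you flag. The paper's route: $d^\nabla u=0$ gives $\delta^\nabla d^\nabla u=0$, so the second equation of \eqref{YMH} forces $(1-|u|^2)u=0$, i.e.\ $u\equiv 0$ or $|u|\equiv 1$ (note $|u|$ is constant since $u$ is parallel). The case $u\equiv 0$ is then excluded by testing stability against a variation purely in the Higgs direction: for a nonzero parallel section $w$ (available for $n\ge5$ since the bundle is flat over the simply connected $S^n$), $(0,w)\in\mathcal{C}$ and $\mathscr{L}^{(\nabla,0)}(0,w)=-\tfrac{\lambda}{2}\int_{S^n}|w|^2\,dV<0$, contradicting weak stability. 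Your proposed fallback, the scaling $u_t=(1+t)u$, i.e.\ the variation $(0,u)$, cannot do this job: at $u\equiv0$ it has second variation zero and detects nothing. The remainder of your outline --- in particular reading off $\delta^\nabla R^\nabla=0$ for $n=4$ from the first equation of \eqref{YMH} once $d^\nabla u=0$ --- matches the paper.
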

    
    The Higgs fields taking values in $\Omega^0(\mathfrak{g}_E)$ are sometimes concerned in some physical research. The corresponding Yang-Mills-Higgs functional is
    \begin{align*}
    	\mathscr{A}(\nabla,\Phi)=\frac12\int_M|R^\triangledown|^2+|d^\triangledown\Phi|^2+\frac\lambda4(1-|\Phi|^2)^2dV,
    \end{align*}
    where $\Phi\in\Omega^0(\mathfrak{g}_E)$. The Euler-Lagrange equation of $\mathscr{A}$ is
    \begin{align}\label{YMH2}
	    \begin{split}
		    \delta^\triangledown R^\triangledown&=[d^\triangledown\Phi,\Phi],\\
		    \delta^\triangledown d^\triangledown \Phi&=\frac\lambda2(1-|\Phi|^2)\Phi.
	    \end{split}
    \end{align}
   The stable Yang-Mills-Higgs pairs $(\nabla,\Phi)$ on $S^n$ have similar properties, which are discussed in Section 4.
   
    The energy identity was first established in \cite{DK} in dimension $4$ manifolds for sequences of anti-self-dual Yang-Mills fields. In \cite{Tian}, Tian proved that the defect measure of sequences of Yang-Mills fields on a Riemannian manifold $(M, g)$ of dimension $n$ ($n\geq 4$) is carried by a $n-4$-rectifiable subset $S$ of $M$. Rivi\'{e}re  \cite{Riviere} proved that, in $4$ dimension, at any point of $S$ it is the sum of $L^2$ energies of Yang-Mills fields on $S^4$ and this result holds on any dimension under the additional assumption on the $W^{2,1}$ norm of the curvature.  Moreover, in \cite{NV}, Nabor-Valtorta proved that the $W^{ 2,1 }$-norm is automatically bounded for a sequence of stationary Yang-Mills fields with bounded energy.
    
     In \cite{Song}, Song proved the energy identity for a sequence of Yang-Mills-Higgs pairs on a fiber bundle with curved fiber
     spaces over a compact Riemannian surface and the blow-up only occurs in the Higgs part in the 2-dimensional case. In Section 5, we assume that $M$ is a 4-dimensional compact Riemannian manifold and $\{(\nabla_i,u_i)\}$ is a sequence of Yang-Mills-Higgs pairs over $M$ with uniformly bounded energy $\mathscr{A}(\nabla_i,u_i)\le K$. Unlike the case of 2-dimensional manifolds, it will be shown that there is no energy concentration point for Higgs field over $4-$dim manifolds and the blow-up only occurs in the curvature part. We prove the following theorem.
   \begin{thm}\label{maintheorem2}
   	Assume $\{(\nabla_i,u_i)\}$ is a family of Yang-Mills-Higgs pairs which satisfy the equation (\ref{YMH2}) and $\mathscr{A}(\nabla_i,u_i)\le K$. Then there is a finite subset $\Sigma=\{x_1,...,x_l\}\subset M$, a Yang-Mills-Higgs pair $(\nabla_\infty,u_\infty)$ on $M\setminus\Sigma$ and Yang-Mills connections  $\{\widetilde\triangledown_{jk}\mid1\le j\le l,1\le k\le K_j\}$ over $S^4$, such that there is a subsequence of $\{(\nabla_i,u_i)\}$ converges to $(\nabla_\infty,u_\infty)$ in $C^\infty_{loc}(M\setminus\Sigma)$ under gauge transformations and
   	\begin{equation}
   		\lim_{i\to\infty}\mathscr{A}(\nabla_i,u_i)=\mathscr{A}(\nabla_\infty,u_\infty)+\sum_{j=1}^l\sum_{k=1}^{K_j}YM(\widetilde\nabla_{jk}).
   	\end{equation}
   \end{thm}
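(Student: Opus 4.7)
My plan is to run a bubble-neck decomposition adapted to the Yang-Mills-Higgs setting, following the broad strategy of Tian and Rivi\`ere for pure Yang-Mills, but with the key new observation that in dimension $4$ the Higgs energy scales \emph{subcritically}, so the bubbles are pure Yang-Mills connections on $S^4$. First I would establish an $\varepsilon$-regularity lemma: there exists $\varepsilon_0>0$ such that if $\int_{B_r(x)}|R^{\nabla_i}|^2\,dV<\varepsilon_0$ for $r<r_0$, then after Uhlenbeck gauge fixing on $B_r$ and bootstrapping the coupled elliptic system (\ref{YMH2}) (using a uniform $C^0$ bound $|u_i|\le C$ coming from the maximum principle applied to $\Delta|u_i|^2$), one obtains uniform $C^k$ estimates on $(\nabla_i,u_i)$ over $B_{r/2}$. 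Defining the concentration set
$$\Sigma:=\Bigl\{\,x\in M:\liminf_{r\to 0}\liminf_{i\to\infty}\int_{B_r(x)}|R^{\nabla_i}|^2\,dV\ge\varepsilon_0\,\Bigr\}=\{x_1,\dots,x_l\},$$
a covering argument and the energy bound $\mathscr{A}(\nabla_i,u_i)\le K$ force $l\le 2K/\varepsilon_0<\infty$. Away from $\Sigma$, the $\varepsilon$-regularity gives uniform local smooth bounds, and Uhlenbeck's patching theorem produces global gauge transformations $g_i$ such that a subsequence of $g_i^{*}(\nabla_i,u_i)$ converges in $C^\infty_{\mathrm{loc}}(M\setminus\Sigma)$ to a Yang-Mills-Higgs pair $(\nabla_\infty,u_\infty)$.

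Next I would extract bubbles at each $x_j$. Choose scales $\lambda_i^{(j)}\to 0$ and energy-concentration centers $x_i^{(j)}\to x_j$ so that the curvature energy on $B_{\lambda_i^{(j)}}(x_i^{(j)})$ reaches a prescribed threshold just below $\varepsilon_0$, and rescale $(\nabla_i,u_i)$ to a pair $(\widetilde\nabla_i,\widetilde u_i)$ on an expanding region in $\mathbb{R}^4$. Since $\dim M=4$ the curvature energy is scale-invariant, but
$$\int_{B_R}|d^{\widetilde\nabla_i}\widetilde u_i|^2\,dy=(\lambda_i^{(j)})^2\!\!\int_{B_{R\lambda_i^{(j)}}(x_i^{(j)})}\!\!|d^{\nabla_i}u_i|^2\,dV\longrightarrow 0,$$
and the potential term scales as $(\lambda_i^{(j)})^4$. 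Consequently the rescaled equation on $\mathbb{R}^4$ becomes, in the limit, a source-free Yang-Mills equation. Applying the $\varepsilon$-regularity away from secondary concentration points yields $C^\infty_{\mathrm{loc}}$ subconvergence (up to gauge) to a weak Yang-Mills connection on $\mathbb{R}^4$ with finite $L^2$ curvature, which by Uhlenbeck's removable singularity theorem extends to a Yang-Mills connection $\widetilde\nabla_{j1}$ on $S^4$. Iterating at deeper scales and secondary centers, the procedure terminates after finitely many bubbles $\{\widetilde\nabla_{jk}\}_{k=1}^{K_j}$, each carrying curvature energy at least $\varepsilon_0$. This is precisely the claim that the blow-up only occurs in the curvature part.

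The main obstacle, and the technical heart of the theorem, is the \emph{neck analysis}: I need to show that the annular regions separating the base scale from the bubble scales, as well as those separating consecutive bubble scales, carry no energy in the triple limit. Concretely,
$$\lim_{R\to\infty}\lim_{\delta\to 0}\lim_{i\to\infty}\int_{B_\delta(x_j)\setminus B_{R\lambda_i^{(j)}}(x_i^{(j)})}\!\bigl(|R^{\nabla_i}|^2+|d^{\nabla_i}u_i|^2\bigr)\,dV=0,$$
together with its analogue between consecutive bubble scales. The Higgs contribution vanishes from the subcritical scaling already used above. For the curvature contribution, the plan is to combine the monotonicity formula for the Yang-Mills-Higgs functional (in which the Higgs-sourced stress-energy terms are lower-order perturbations controlled by the $C^0$ bound on $u$) with a three-annulus decay estimate in the spirit of Tian and Rivi\`ere, appealing to Nabor-Valtorta \cite{NV} to obtain the $W^{2,1}$ curvature bound automatically in dimension $4$. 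Once the neck is shown to have vanishing limiting energy, decomposing $M$ as the disjoint union of $M\setminus\bigcup_j B_\delta(x_j)$ (carrying $\mathscr{A}(\nabla_\infty,u_\infty)$), the bubble cores (carrying $\sum_{j,k} YM(\widetilde\nabla_{jk})$), and the necks (vanishing), yields the energy identity in the iterated limit $i\to\infty$, $R\to\infty$, $\delta\to 0$.
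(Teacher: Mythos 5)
Your overall architecture ($\epsilon$-regularity, a finite curvature-concentration set $\Sigma$, smooth convergence away from $\Sigma$, rescaling to pure Yang-Mills bubbles on $S^4$ via removable singularities, and a neck analysis) matches the paper's. But there are two genuine gaps. First, your treatment of the Higgs energy is wrong where it matters most. The displayed scaling identity has the wrong exponent: with $\widetilde u_i(y)=u_i(x_i^{(j)}+\lambda_i^{(j)}y)$ one gets $\int_{B_R}|d^{\widetilde\nabla_i}\widetilde u_i|^2\,dy=(\lambda_i^{(j)})^{-2}\int_{B_{R\lambda_i^{(j)}}}|d^{\nabla_i}u_i|^2\,dV$, not $(\lambda_i^{(j)})^{2}(\cdots)$, and this quantity does not tend to $0$ from the energy bound alone. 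More importantly, the statement you actually need --- that the unrescaled Higgs energy $\int_{B_\delta(x_j)}\bigl(|d^{\nabla_i}u_i|^2+\tfrac{\lambda}{4}(1-|u_i|^2)^2\bigr)dV$ tends to $0$ uniformly in $i$ as $\delta\to0$, so that no Higgs energy is lost at $\Sigma$ or on the necks --- is not a scaling fact at all. A priori the bounded $L^2$ quantity $|d^{\nabla_i}u_i|^2$ could concentrate at $x_j$. The paper rules this out by pairing the second equation $\delta^{\nabla}d^{\nabla}u=\tfrac{\lambda}{2}(1-|u|^2)u$ against $\eta\, u$ for a cutoff $\eta$ and using $\|u\|_{L^\infty}\le1$, obtaining $\|d^{\nabla}u\|^2_{L^2(B_r)}\le C(K,\lambda)(r+r^4)$ (Lemmas \ref{L^infty}--\ref{concentration}). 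Without this estimate your energy identity is not established; what your (corrected) scaling argument does give is only that the coupling terms in the rescaled equations carry factors of $(\lambda_i^{(j)})^2$, which is why the bubble limit is source-free Yang-Mills.

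Second, the curvature neck estimate --- which you correctly identify as the technical heart --- is not actually proved in your proposal. You appeal to a monotonicity formula, a ``three-annulus decay estimate in the spirit of Tian and Rivi\`ere,'' and the Naber--Valtorta $W^{2,1}$ bound, but those results are stated for stationary pure Yang-Mills fields; here $\delta^{\nabla_i}R^{\nabla_i}$ has a nonzero Higgs source, so none of them applies off the shelf, and asserting the extra terms are ``lower-order perturbations'' is not a proof. The paper instead argues directly: it decomposes the neck into dyadic annuli $\mathfrak{U}_l$, puts the connection in Uhlenbeck gauge on each annulus with matched boundary data ($d^\ast A_{i,l}=0$, tangential parts agreeing on the spheres $S_l$, vanishing mean of the radial part), applies Stokes' theorem annulus by annulus, absorbs the cubic term $\int A\#A\#R$ using the smallness hypothesis, and controls the new term $\sum_l\int_{\mathfrak{U}_l}\langle\delta^{\nabla}R^{\nabla},A\rangle$ by the first equation of (\ref{YMH}) together with H\"older and $\|A_{i,l}\|_{L^2(\mathfrak{U}_l)}\le C2^{-l}\|R^{\nabla_i}\|_{L^2(\mathfrak{U}_l)}$, which produces the factor $2^{-l_1}\le 2\delta\to0$. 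You would need to supply an argument of comparable substance (or carefully verify that the stationary Yang-Mills machinery extends to the coupled system) before the energy identity is complete.
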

   
	\section{Preliminary}\ 
	
	Let $\left( M,g \right) $ be a compact Riemannian manifold and $D$ be its Levi-Civita connection. Let $E\to M$ be a rank $r$ vector bundle over $M $ with a compact Lie group $G\subset SO\left( r \right) $ as its structure group. We also assume $\langle \ ,\ \rangle$ is a Riemannian metric of $E$ compatible with the action of $G$ . Let $\mathfrak{g}_E $ be the adjoint bundle of $E$. Assume $\nabla:\Omega^0(E)\to\Omega^1(E)$ is a connection on $E$ compatible with the metric $\langle \ ,\ \rangle$. Locally, $\nabla$ takes the form
	\[\nabla=d+A\]
	where $A\in \Omega^1 \left( \mathfrak{g}_E \right) $ is the connection $1$-form.
	
	For any connection $\nabla$ of $E$, the curvature $R^\nabla=\nabla^2 \in \Omega^2 \left( \mathfrak{g}_E \right) $  measures the extent to which $\nabla$ fails to commute. Locally, $R^\nabla $ is given by
	\[R^\nabla =dA +\frac{1}{2}[A\wedge A]\]
	where the bracket of $\mathfrak{g}_E $-valued $1$-forms $\varphi$ and $\psi$ is defined to be 
	\[[\varphi \wedge \psi ]_{ X,Y }=\left[ \varphi_X,\psi_Y \right] -\left[ \varphi_Y, \psi_X \right]\]
	as in \cite{JL}. 
	
	The connection $\nabla$ on $E$ induces a natural connection on $\mathfrak{g}_E$. Indeed for $\phi\in \Omega^0(\mathfrak{g}_E)$ we define
	\[\nabla(\phi)=[\nabla,\phi],\] i.e. $\nabla(\phi)(\sigma)=\nabla(\phi(\sigma))-\phi(\nabla\sigma)$ for any section $\sigma$ of $E$. By direct calculation, for any $X\in T(M)$ we have
	\begin{align*}
		\nabla_X(\frac12(u\otimes\phi^\ast-\phi\otimes u^\ast))=\frac12(\nabla_Xu\otimes\phi^\ast-\phi\otimes (\nabla_Xu)^\ast)+\frac12(u\otimes(\nabla_X\phi)^\ast-\nabla_X\phi\otimes u^\ast).
	\end{align*}
Similarly, the curvature of this connection on $\mathfrak{g}_E$ is given by the formula
	\[R^\nabla_{X, Y}(\phi)=[R^\nabla_{X, Y},\phi],\] where $R^\nabla$ on the right denotes the curvature of $E$. We define an exterior differential $d^\nabla:\Omega^p(\mathfrak{g}_E)\rightarrow\Omega^{p+1}(\mathfrak{g}_E),p\geq 0$ as follows. For each real-valued differential $p$-form $\alpha$ and each section $\sigma$ of $E$, we set
	\[d^\nabla(\alpha\otimes\sigma)=(d\alpha)\wedge\sigma+(-1)^p\alpha\otimes\nabla\sigma\] and extend the definition to general $\psi\in \Omega^p(E)$ by linearity. Note that $d^\nabla=\nabla$ on $\Omega^0(E)$ and $(d^\nabla (d^\nabla\sigma))_{X, Y}=R^\nabla_{X, Y}(\sigma)$.

     The inner product of $\Omega^0 \left( \mathfrak{g}_E \right) $ induced by the trace inner product metric on $\mathfrak{so}\left( r \right) $ is given by
    \[\langle  \phi,\varphi\rangle =\frac{1}{2}\operatorname{Tr}\left( \phi^T \varphi \right) ,\text{ where } \varphi,\phi \in \Omega^0 \left( \mathfrak{g}_E \right) . \] 
    Then for any $\phi,\varphi,\rho\in\Omega^0(\mathfrak{g}_E)$, we have \[\langle [\phi,\varphi],\rho\rangle=\langle \phi,[\varphi,\rho]\rangle.\] Combining with Riemannian metric $g$, the inner product of $\Omega^p(\mathfrak{g}_E)$ can be defined by
    \begin{equation}
    	\langle \phi,\varphi\rangle=\frac{1}{p!}\sum_{1\le i_1,...,i_p\le n}\langle \phi(e_{i_1},...e_{i_p}),\varphi(e_{i_1},...,e_{i_p})\rangle
    \end{equation}
    where $\phi,\varphi\in\Omega^p(\mathfrak{g}_E) $ and $\{e_i\mid1\le i\le n\}$ is an orthogonal basis of $TM$.
	Integrating the inner product $\langle \ ,\ \rangle$ over $M$, we get a global inner product $\left( \ ,\  \right)$  in $\Omega^p \left( \mathfrak{g}_E \right)$, i.e. 
	\[\left( \varphi,\psi \right)=\int_M \langle  \varphi,\psi \rangle dV, \text{ for any }\varphi,\psi\in \Omega^p \left( \mathfrak{g}_E \right) .\]
	Define the operator $\delta^\nabla:\Omega^{p+1}(\mathfrak{g}_E)\rightarrow \Omega^p(\mathfrak{g}_E), p\geq 0,$ to be the formal adjoint of the operator $d^\nabla$. In local coordinates, for any $\phi\in \Omega^p(\mathfrak{g}_E)$,
	\[(d^\nabla\phi)_{X_0,\cdots,X_p}=\sum_{k=0}^p(-1)^k(\nabla_{X_k}\phi)_{X_0,\cdots,\widehat{X_k},\cdots, X_p},\]
	\[(\delta^\nabla\phi)_{X_1,\cdots, X_{p-1}}=-\sum_{j=1}^n(\nabla_{e_j}\phi)_{e_j, X_1,\cdots,X_{p-1}},\] where $\{e_i\}$ is an orthonormal basis of $TM$.

    We can define the Laplace-Beltrami operator $\Delta^\nabla $ by
	\[\Delta^\triangledown=d^\triangledown\delta^\triangledown+\delta^\triangledown d^\triangledown\] 
	and the rough Laplacian operator $ \nabla^* \nabla $ by \[\nabla^\ast\nabla=-\sum_j(\nabla_{e_j}\nabla_{e_j}-\nabla_{D_{e_j}e_j}).\] 
	For $\psi\in\Omega^1(\mathfrak{g}_E)$ and $\varphi\in\Omega^2(\mathfrak{g}_E)$, we recall the following operator $\mathfrak{R}^\triangledown$ defined in \cite{JL}
	\begin{align}
		&\mathfrak{R}^\triangledown(\psi)(X)=\sum_j[R^\triangledown(e_j,X),\psi(e_j)],\\
		&\mathfrak{R}^\triangledown(\varphi)(X,Y)=\sum_j[R^\triangledown(e_j,X),\varphi(e_j,Y)]-[R^\triangledown(e_j,Y),\varphi(e_j,X)].
	\end{align}
	
	Then we have the following Bochner-Weizenb{\" o}ck formula first introduced in \cite{JL}.
	
	\begin{thm}
		For any $\psi\in\Omega^1(\mathfrak{g}_E)$ and $\varphi\in\Omega^2(\mathfrak{g}_E )$, we have
		\begin{align}\label{Bochner}
			&\Delta^\triangledown\psi=\nabla^\ast\nabla\psi+\psi\circ \operatorname{Ric}+\mathfrak{R}^\triangledown(\psi),\\
			&\Delta^\triangledown \varphi=\nabla^\ast\nabla\varphi+\varphi\circ(\operatorname{Ric}\wedge \operatorname{Id}+2R_M)+\mathfrak{R}^\triangledown(\varphi),
		\end{align}
		where 
		\begin{itemize}
			\item $\operatorname{Ric} :TM\to TM$ is the Ricci transformation defined by
			\[\operatorname{Ric}\left( X \right) =\sum_jR(X,e_j)e_j ,\]
			\item $\psi\circ \operatorname{Ric }\in \Omega^1 \left( \mathfrak{g}_E \right) $ and $\left( \psi\circ \operatorname{Ric } \right) _X =\psi_{ \operatorname{Ric} \left( X \right) } \in \Omega^0 \left( \mathfrak{g}_E \right) ,$
			\item $ \operatorname{Ric}\wedge \operatorname{Id} $ is the extension of the Ricci transformation $\operatorname{Ric}$ to $\wedge^2 TM$ given by
			\[ \left( \operatorname{Ric}\wedge \operatorname{Id} \right)_{ X,Y }=\left( \operatorname{Ric}\wedge \operatorname{Id} \right) \left( X\wedge Y \right)  =\operatorname{Ric}(X)\wedge Y+X\wedge \operatorname{Ric}(Y),\]
			\item $R_M$ is the curvature of $TM$,
			\item For any map $\omega:\wedge^2 TM\to \wedge^2 TM$, the composite map $\varphi\circ \omega :\wedge^2 TM \to \Omega^0 \left( \mathfrak{g}_E \right)  $ is defined by
			\[\left( \varphi\circ \omega \right)_{ X,Y }=\left( \varphi\circ \omega \right)\left( X\wedge Y \right)  =\frac{1}{2}\sum_{j=1}^{n} \varphi_{ e_j, \omega_{ X,Y }e_j } .\]
		\end{itemize}
	\end{thm}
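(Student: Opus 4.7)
The plan is to prove both identities by a pointwise calculation; since both sides are tensors it suffices to verify them at an arbitrary point $p\in M$ in a convenient frame. First I would fix $p$ and pick a local orthonormal frame $\{e_j\}$ of $TM$ that is synchronous at $p$, i.e.\ $(D_{e_j}e_k)(p)=0$ (and hence $[e_i,e_j](p)=0$). In such a frame the structural formulas collapse at $p$: the evaluations $(\nabla_{e_j}\phi)(e_{i_1},\dots,e_{i_p})$ reduce to $\nabla_{e_j}(\phi(e_{i_1},\dots,e_{i_p}))$, the rough Laplacian becomes $\nabla^{*}\nabla\phi=-\sum_j\nabla_{e_j}\nabla_{e_j}\phi$, and the formulas for $d^\triangledown$ and $\delta^\triangledown$ from the preliminaries reduce to plain alternating/contracting iterated covariant derivatives. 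The engine of the whole argument is the Ricci identity for a $\mathfrak{g}_E$-valued $p$-form,
\begin{equation*}
[\nabla_X,\nabla_Y]\phi(Z_1,\dots,Z_p)=[R^\triangledown(X,Y),\phi(Z_1,\dots,Z_p)]-\sum_{k=1}^{p}\phi(Z_1,\dots,R_M(X,Y)Z_k,\dots,Z_p),
\end{equation*}
which separates the curvature of $\nabla$ on $\mathfrak{g}_E$ (acting on values) from the curvature of $D$ on $TM$ (acting on the form arguments).

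For $\psi\in\Omega^1(\mathfrak{g}_E)$, I would plug the synchronous expressions into $(d^\triangledown\delta^\triangledown+\delta^\triangledown d^\triangledown)\psi$ and evaluate on $e_i$ at $p$. A direct expansion and a careful bookkeeping of the second derivatives $D_{e_j}D_{e_k}e_\ell$ of the frame vectors (which, although $D_{e_j}e_k$ vanishes at $p$, do not) collapses to
\begin{equation*}
(\Delta^\triangledown\psi)(e_i)|_p=-\sum_j\nabla_{e_j}\nabla_{e_j}(\psi(e_i))\Big|_p+\sum_j[\nabla_{e_j},\nabla_{e_i}](\psi(e_j))\Big|_p+\sum_j\psi(R_M(e_i,e_j)e_j)\Big|_p.
\end{equation*}
The first term is $(\nabla^{*}\nabla\psi)(e_i)$, the middle term is $\sum_j[R^\triangledown(e_j,e_i),\psi(e_j)]=\mathfrak{R}^\triangledown(\psi)(e_i)$ by the definition of $\mathfrak{R}^\triangledown$, and the last term is $\psi(\operatorname{Ric}(e_i))=(\psi\circ\operatorname{Ric})(e_i)$ by the definition of the Ricci transformation, which gives the first identity.

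The 2-form case follows the same blueprint but with heavier bookkeeping. For $\varphi\in\Omega^2(\mathfrak{g}_E)$ I would compute $(d^\triangledown\delta^\triangledown+\delta^\triangledown d^\triangledown)\varphi$ on $(e_i,e_k)$ at $p$, expanding $d^\triangledown$ as an alternating sum of three terms and $\delta^\triangledown$ as a contraction, and then commuting pairs of covariant derivatives via the Ricci identity above. The symmetric piece of the iterated covariant derivatives on the values of $\varphi$ reassembles into $\nabla^{*}\nabla\varphi$. The bundle curvature contributions in every commutator collect, after using the definition of $\mathfrak{R}^\triangledown$ on 2-forms, into $\mathfrak{R}^\triangledown(\varphi)(e_i,e_k)$. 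The remaining $R_M$-contributions split into two types: a once-contracted piece, which by the definition of $\operatorname{Ric}\wedge\operatorname{Id}$ assembles into $\varphi\circ(\operatorname{Ric}\wedge\operatorname{Id})(e_i,e_k)=\varphi(\operatorname{Ric}(e_i),e_k)+\varphi(e_i,\operatorname{Ric}(e_k))$; and an uncontracted piece involving $\varphi(e_j,R_M(\cdot,\cdot)e_j)$ that matches $2\,\varphi\circ R_M(e_i,e_k)$ through the formula $(\varphi\circ\omega)_{X,Y}=\tfrac12\sum_j\varphi_{e_j,\omega_{X,Y}e_j}$.

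The main obstacle is precisely this last step: organizing the $R_M$-terms in the 2-form computation so that they collapse into $\operatorname{Ric}\wedge\operatorname{Id}+2R_M$ and not some other combination. Getting the coefficient $2$ in front of $R_M$ requires the first Bianchi identity $R_M(X,Y)Z+R_M(Y,Z)X+R_M(Z,X)Y=0$ together with the pair symmetry $\langle R_M(X,Y)Z,W\rangle=\langle R_M(Z,W)X,Y\rangle$ to repackage sums that look asymmetric in the frame indices as multiples of the invariant operator $\varphi\circ R_M$. Once that is done the identities hold at $p$ in the synchronous frame; since $p$ was arbitrary and both sides are tensors, the formulas hold globally.
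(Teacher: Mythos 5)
The paper gives no proof of this theorem: it simply cites Bourguignon--Lawson \cite{JL}, where these Weitzenb\"ock formulas are established. Your outline reproduces exactly that classical argument --- synchronous frame, the Ricci identity separating the $\mathfrak{g}_E$-curvature from the $TM$-curvature, and the first Bianchi identity plus pair symmetry to produce the coefficient $2$ on $R_M$ in the $2$-form case --- and the steps you describe are correct and do assemble into the stated formulas, so the proposal is sound and consistent with the paper's (implicit) source.
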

	
	\begin{zj}
   In particular, on the standard sphere $S^n $,  \[\operatorname{Ric}\left( X \right) =\left( n-1 \right) X\]
   and 
   \[(R_M)_{ X,Y }Z=-\left( X\wedge Y \right)  \left( Z \right)  =\langle  Y,Z\rangle X-\langle  X,Z\rangle Y.\]
    Thus, we have
   \[\psi\circ \operatorname{Ric}=\left( n-1 \right) \psi \]
   and 
   \[\varphi\circ \left( \operatorname{Ric} \wedge \operatorname{Id}+2R_M \right) =2\left( n-2 \right) \varphi.\]
\end{zj}
\

Note that for any $B\in \Omega^1 \left( \mathfrak{g}_E\right) $ and $w \in \Omega^0 \left( E	 \right) $,
\begin{equation}\label{curvature-t}
	R^{\triangledown+tB}=R^\nabla +td^\triangledown B +t^2 B\wedge B
\end{equation}
and
\begin{equation}\label{dPhi-t}
	d^{\triangledown+tB}\left( u+tw \right)=d^\triangledown u+t(B\cdot u+d^\triangledown w)+t^2B\cdot w .
\end{equation}

Assume $(\nabla,u)$ is a Yang-Mills-Higgs pair satisfying the equation (\ref{YMH}) and $(\nabla^t,u^t)$ is a curve on $$\tilde{\mathcal{A}}=\{(\widetilde\nabla,\tilde u)\mid \widetilde\nabla \textrm{ is a connection of }E,\ \tilde u\in\Omega^0(E)\}$$ such that $\nabla^0=\nabla$ and $u^0=u$. If we assume $\frac{d}{dt}(\nabla^t,u^t)\mid_{t=0}=(B,w)$, the second variation of $\mathscr{A}$ is
\begin{align}
	\begin{split}
		\frac{d^2}{dt^2}\mathscr{A}(\nabla^t,u^t)\mid_{t=0}=\int_M&\langle  \delta^\triangledown d^\triangledown B+\mathfrak{R}^\triangledown(B)+\frac12 (Bu\otimes u^\ast-u\otimes(Bu)^\ast),B\rangle+2\langle   d^\triangledown u,Bw\rangle\\
		&+2\langle     d^\triangledown w,Bu\rangle+\langle \delta^\triangledown d^\triangledown w+\lambda\langle  u,w\rangle u-\frac\lambda2(1-|u|^2)w,w\rangle dV.
	\end{split}
\end{align}
For any gauge transformation $g\in\mathcal{G}$, $g$ acts on $(\nabla,u)$ such that $(\nabla^g,u^g)=(g\circ\nabla\circ g^{-1},gu)$. Then for any $\sigma\in\Omega^0(\mathfrak{g}_E)$, let $g_t=\exp(t\sigma)$ be a family of gauge transformations, the variation of $(\nabla,u)$ along $\sigma$ is
\begin{align*}
	\frac{d}{dt}(\nabla^{g_t},u^{g_t})\mid_{t=0}=(-d^\triangledown\sigma,\sigma u)\in T_{ \left( \nabla,u \right)  }\tilde{\mathcal{A}}=\Omega^1 \left( \mathfrak{g}_E \right) \times \Omega^0 \left(E \right).
\end{align*}
In addition, the Yang-Mills-Higgs functional $\mathscr{A}$ is invariant under the action of gauge group. So it is interesting to consider the variation $\left( B,w \right)$  perpendicular to the direction of gauge transformation with respect to the global inner product on $\Omega^1 \left( \mathfrak{g}_E \right) \times \Omega^0 \left( \mathfrak{g}_E \right)$. Define
\begin{equation*}
	\begin{split}
 \zeta:\Omega^0 (\mathfrak{g}_E) &\to \Omega^1 (\mathfrak{g}_E)\times\Omega^0(\mathfrak{g}_E) \\
 \sigma  &\mapsto(-d^\triangledown\sigma,\sigma u),
	\end{split}
\end{equation*}
then $(B,w)\in \operatorname{Im}(\zeta)^\perp$ if and only if for any $\sigma$, 
\begin{align*}
	0=\int_M\langle  -d^\triangledown\sigma,B \rangle+\langle  \sigma u,w \rangle dV=\int_M\langle  -\delta^\triangledown B+\frac12(w\otimes u^\ast-u\otimes w^\ast),\sigma \rangle  dV.
\end{align*}
Thus the space of admissible variation at $\left( \nabla ,u \right) $ is 
\begin{equation}
	\mathcal{C}=T_{ \left( \nabla ,u \right)  } \left( \tilde{\mathcal{A}}/\mathcal{G} \right) =\operatorname{Im}(\zeta)^\perp=\{(B,w)\in\Omega^1(\mathfrak{g}_E)\times\Omega^0(E)\mid \delta^\triangledown B=\frac12(w\otimes u^\ast-u\otimes w^\ast)\}.
\end{equation}
Using $\delta^\triangledown(Bu)=(\delta^\triangledown B)u-B\llcorner d^\triangledown u$, where
\begin{align*}
	B\llcorner d^\triangledown u=\sum_jB(e_j)\nabla_{e_j}u,
\end{align*}
 and for
$(B,w)\in\mathcal{C}$, we have
\begin{align*}
	\int_M2\langle  d^\triangledown w,Bu \rangle  dV=\int_M2\langle  w,\delta^\triangledown(Bu) \rangle  dV=\int_M\langle  (w\otimes u^\ast-u\otimes w^\ast),\delta^\triangledown B \rangle  -2\langle  w,B\llcorner d^\triangledown u \rangle  dV.
\end{align*}
For any $x\in M$, let $\{e_i\mid 1\le i\le n\}$ be an orthogonal basis of $T_xM$. Since $B(e_i)\in\mathfrak{so}(E_x)$, we have 
$$-2\langle  w,B\llcorner d^\triangledown u \rangle  =\sum_i-2\langle  w,B(e_i)\nabla_{e_i}u \rangle  =\sum_i2\langle  B(e_i)w,\nabla_{e_i}u \rangle  =2\langle  Bw,d^\triangledown u \rangle  .$$
Hence for $(B,w)\in\mathcal{C}$, the second variation of $\mathscr{A}$ is
\begin{align}
	\begin{split}
		\frac{d^2}{dt^2}\mathscr{A}(\nabla^t,u^t)\mid_{t=0}=\int_M&\langle  \Delta^\triangledown B+\mathfrak{R}^\triangledown(B)+\frac12(Bu\otimes u^\ast-u\otimes(Bu)^\ast)+d^\triangledown u\otimes w^\ast-w\otimes(d^\triangledown u)^\ast,B \rangle  \\
		&+\langle  \delta^\triangledown d^\triangledown w+\frac12(w\otimes u^\ast-u\otimes w^\ast)u-2B\llcorner d^\triangledown u+\lambda\langle  u,w \rangle  u-\frac\lambda2(1-|u|^2)w,w \rangle  dV.
	\end{split}
\end{align}
Define an operator
\begin{align}
	\begin{split}
		\mathscr{S}^{(\nabla,u)}:\Omega^1(\mathfrak{g}_E)\times\Omega^0(E)&\to\Omega^1(\mathfrak{g}_E)\times\Omega^0(E),	
	\end{split}
\end{align}
where
\begin{align*}
	\begin{split}
		\mathscr{S}^{(\nabla,u)}(B,w)=&(\Delta^\triangledown B+\mathfrak{R}^\triangledown(B)+\frac12(Bu\otimes u^\ast-u\otimes(Bu)^\ast)+d^\triangledown u\otimes w^\ast-w\otimes(d^\triangledown u)^\ast,\\
		&\delta^\triangledown d^\triangledown w+\frac12(w\otimes u^\ast-u\otimes w^\ast)u-2B\llcorner d^\triangledown u+\lambda\langle  u,w \rangle  u-\frac\lambda2(1-|u|^2)w).
	\end{split}
\end{align*}
It is easy to see that $\mathscr{S}^{(\nabla,u)}$ is a self-adjoint operator on $\Omega^1(\mathfrak{g}_E)\times\Omega^0(E)$. Furthermore, we can prove that $\mathscr{S}^{(\nabla,u)}$ is a self-adjoint operator on $\mathcal{C}$.
\begin{yl}
	$\mathscr{S}^{(\nabla,u)}(\mathcal{C})\subset\mathcal{C}$.
\end{yl}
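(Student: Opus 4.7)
My plan is to exploit the gauge invariance of $\mathscr{A}$ together with the self-adjointness of $\mathscr{S}^{(\nabla,u)}$ already noted in the text. Since $(\nabla,u)$ is a critical point of the gauge invariant functional $\mathscr{A}$, the Hessian $\mathscr{S}^{(\nabla,u)}$ should annihilate the image of $\zeta$ (the infinitesimal gauge directions), and then preservation of $\mathcal{C}=\operatorname{Im}(\zeta)^\perp$ will follow immediately by self-adjointness.

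Concretely, I fix $\sigma\in\Omega^0(\mathfrak{g}_E)$ and an arbitrary variation $(B,w)\in\Omega^1(\mathfrak{g}_E)\times\Omega^0(E)$. I choose a curve $(\nabla^s,u^s)$ in $\tilde{\mathcal{A}}$ with $(\nabla^0,u^0)=(\nabla,u)$ and $\frac{d}{ds}(\nabla^s,u^s)|_{s=0}=(B,w)$, and form the two-parameter family $\gamma(t,s)=(\exp(t\sigma)\cdot\nabla^s,\exp(t\sigma)\cdot u^s)$. By gauge invariance, $\mathscr{A}(\gamma(t,s))=\mathscr{A}(\nabla^s,u^s)$ is independent of $t$, so $\partial_t\partial_s\mathscr{A}(\gamma)|_{t=s=0}=0$. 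On the other hand, because $(\nabla,u)$ is critical, the contribution of $\partial_t\partial_s\gamma|_{0,0}$ through the first variation vanishes, and the Hessian-polarization identity yields $\partial_t\partial_s\mathscr{A}(\gamma)|_{0,0}=(\mathscr{S}^{(\nabla,u)}(\zeta(\sigma)),(B,w))$, where I have used $\partial_t\gamma|_{0,0}=\zeta(\sigma)$ and $\partial_s\gamma|_{0,0}=(B,w)$. Since $(B,w)$ is arbitrary, this forces $\mathscr{S}^{(\nabla,u)}(\zeta(\sigma))=0$ for every $\sigma\in\Omega^0(\mathfrak{g}_E)$.

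For $(B,w)\in\mathcal{C}$ and any $\sigma\in\Omega^0(\mathfrak{g}_E)$, self-adjointness now gives $(\mathscr{S}^{(\nabla,u)}(B,w),\zeta(\sigma))=((B,w),\mathscr{S}^{(\nabla,u)}(\zeta(\sigma)))=0$, hence $\mathscr{S}^{(\nabla,u)}(B,w)\in\operatorname{Im}(\zeta)^\perp=\mathcal{C}$, which is the claim.

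The one step that requires care is the identification $\partial_t\partial_s\mathscr{A}(\gamma)|_{0,0}=(\mathscr{S}^{(\nabla,u)}(\zeta(\sigma)),(B,w))$: one must be sure that the Euler-Lagrange operator being zero at $(\nabla,u)$ kills the extra $\partial_t\partial_s\gamma|_{0,0}$ term that appears when differentiating a two-parameter family rather than a single curve. If one prefers a fully computational alternative, one can verify $\delta^\nabla B'=\frac12(w'\otimes u^*-u\otimes (w')^*)$ directly from the explicit formula defining $\mathscr{S}^{(\nabla,u)}$, invoking the Bochner-Weizenb\"ock formula for $\Delta^\nabla B$, the second Bianchi identity $d^\nabla R^\nabla=0$, the Yang-Mills-Higgs equations \eqref{YMH}, and the defining constraint $\delta^\nabla B=\frac12(w\otimes u^*-u\otimes w^*)$; this route is notably longer and algebraically delicate but entirely self-contained.
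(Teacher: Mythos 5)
Your argument hinges on the claim that $\mathscr{S}^{(\nabla,u)}(\zeta(\sigma))=0$ for every $\sigma$, and that claim is false. The operator $\mathscr{S}^{(\nabla,u)}$ is \emph{not} the unconstrained Hessian of $\mathscr{A}$: its explicit formula was obtained from the raw second variation only after substituting the constraint $\delta^\nabla B=\frac12(w\otimes u^\ast-u\otimes w^\ast)$ (this is precisely how $\delta^\nabla d^\nabla B$ was promoted to $\Delta^\nabla B$ and how the term $\frac12(w\otimes u^\ast-u\otimes w^\ast)u$ entered the second component). Consequently the quadratic form of $\mathscr{S}^{(\nabla,u)}$ agrees with the true Hessian $H$ of $\mathscr{A}$ only on $\mathcal{C}$; one can check that for a general variation $X=(B,w)$ one has $(\mathscr{S}(X),X)=H(X,X)+\|\delta^\nabla B-\tfrac12(w\otimes u^\ast-u\otimes w^\ast)\|_{L^2}^2$. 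Hence the polarization identity $\partial_t\partial_s\mathscr{A}(\gamma)|_{0,0}=(\mathscr{S}(\zeta(\sigma)),(B,w))$ that you invoke fails for arbitrary $(B,w)$, and you cannot conclude $\mathscr{S}(\zeta(\sigma))=0$. Indeed, the paper's own computations show that $\mathscr{S}(\zeta(\sigma))$ is a generically nonzero element of $\operatorname{Im}(\zeta)$: in the parallel lemma of Section 4 one finds explicitly $\mathscr{S}_1(\zeta(\sigma))=d^\nabla(-\delta^\nabla d^\nabla\sigma+[[\sigma,\Phi],\Phi])$ and $\mathscr{S}_2(\zeta(\sigma))=[\delta^\nabla d^\nabla\sigma+[[\Phi,\sigma],\Phi],\Phi]$.

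The gauge-invariance idea can be salvaged, but only by quantifying the discrepancy between $\mathscr{S}$ and the genuine Hessian operator $\tilde{\mathscr{S}}$: setting $P(B,w)=\delta^\nabla B-\frac12(w\otimes u^\ast-u\otimes w^\ast)$, one has $\mathscr{S}=\tilde{\mathscr{S}}+P^\ast P$, so that for $(B,w)\in\mathcal{C}=\ker P$ one gets $(\mathscr{S}(B,w),\zeta(\sigma))=(\tilde{\mathscr{S}}(B,w),\zeta(\sigma))=((B,w),\tilde{\mathscr{S}}\zeta(\sigma))=0$ by your gauge-invariance argument applied to $\tilde{\mathscr{S}}$. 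But establishing the operator identity $\mathscr{S}=\tilde{\mathscr{S}}+P^\ast P$ requires exactly the kind of bookkeeping you are trying to avoid, and you neither state nor verify it. Your fallback --- checking directly, via the Bochner--Weitzenb\"ock formula, the Bianchi identity and the equations (\ref{YMH}), that $\mathscr{S}(\zeta(\sigma))$ pairs to zero against every element of $\mathcal{C}$ (equivalently, that $\mathscr{S}(B,w)$ satisfies the defining constraint of $\mathcal{C}$) --- is in substance what the paper actually does, but since you do not carry it out, the proof as written has a genuine gap at its central step.
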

\begin{proof}
	Denote $\mathscr{S}^{(\nabla,u)}=(\mathscr{S}_1,\mathscr{S}_2)$. We only need to prove that for any $(B,w)\in\mathcal{C}$ and $\sigma\in\Omega^0(\mathfrak{g}_E)$, we have
	\begin{align*}
		\int_M\langle \mathscr{S}_1(d^\triangledown\sigma,-\sigma u),B\rangle +\langle \mathscr{S}_2(d^\triangledown\sigma,-\sigma u),w\rangle dV=0.
	\end{align*}
	First we have 
	$$\Delta^\triangledown d^\triangledown\sigma=d^\triangledown\delta^\triangledown d^\triangledown\sigma+\delta^\triangledown d^\triangledown d^\triangledown\sigma=d^\triangledown\Delta^\triangledown\sigma+\delta^\triangledown[R^\triangledown,\sigma]=d^\triangledown\Delta^\triangledown\sigma+[\delta^\triangledown R^\triangledown,\sigma]-\mathfrak{R}^\triangledown(d^\triangledown\sigma).$$
	The equation (\ref{YMH}) implies that for any $\phi\in\Omega^1(\mathfrak{g}_E)$, we have 
	$$\langle d^\triangledown u,\phi u\rangle =\langle \frac12(d^\triangledown u\otimes u^\ast-u\otimes (d^\triangledown u)^\ast),\phi\rangle =-\langle \delta^\triangledown R^\triangledown,\phi\rangle .$$ 
	By direct calculation, we have
	\begin{align*}
		&\int_M\langle \frac12((d^\triangledown\sigma\cdot u)\otimes u^\ast-u\otimes(d^\triangledown\sigma\cdot u)^\ast)+\sigma u\otimes(d^\triangledown u)^\ast-d^\triangledown u\otimes(\sigma u)^\ast,B\rangle dV\\
		=&\int_M\langle d^\triangledown\sigma\cdot u,Bu\rangle -2\langle d^\triangledown u,B\sigma u\rangle dV\\
		=&\int_M\langle \sigma u,\delta^\triangledown(Bu)\rangle -\langle \sigma\cdot d^\triangledown u,Bu\rangle -2\langle d^\triangledown u,B\sigma u\rangle dV\\
		=&\int_M\langle \sigma u,\delta^\triangledown B\cdot u\rangle -\langle \sigma u,B\llcorner d^\triangledown u\rangle -\langle \sigma\cdot d^\triangledown u,Bu\rangle -2\langle d^\triangledown u,B\sigma u\rangle dV\\
		=&\int_M\langle \sigma u,\delta^\triangledown B\cdot u\rangle +\langle d^\triangledown u,[\sigma,B]u\rangle dV\\
		=&\int_M\langle \sigma u,\delta^\triangledown B\cdot u\rangle -\langle \delta^\triangledown R^\triangledown,[\sigma, B]\rangle dV.
	\end{align*}
	Thus 
	$$\int_M\langle \mathscr{S}_1(d^\triangledown\sigma,-\sigma u),B\rangle dV=\int_M\langle \delta^\triangledown d^\triangledown\sigma,\delta^\triangledown B\rangle +\langle \sigma u,\delta^\triangledown B\cdot u\rangle dV.$$
	On the other hand, $\sigma\in\Omega^0(\mathfrak{g}_E)$ implies $\langle u,\sigma u\rangle =0$. By the equation (\ref{YMH}) we have
	\begin{align*}
		-\delta^\triangledown d^\triangledown(\sigma u)=-\delta^\triangledown d^\triangledown\sigma\cdot u+2d^\triangledown\sigma\llcorner d^\triangledown u-\frac\lambda2(1-|u|^2)\sigma u.
	\end{align*}
	Hence by $(B,w)\in\mathcal{C}$, we have
	\begin{align*}
		&\int_M\langle \mathscr{S}_2(d^\triangledown\sigma,-\sigma u),w\rangle dV\\
		=&\int_M-\langle \delta^\triangledown d^\triangledown\sigma\cdot u,w\rangle -\langle \sigma u,\frac12(w\otimes u^\ast-u\otimes w^\ast)u\rangle dV\\
		=&\int_M-\langle \delta^\triangledown d^\triangledown\sigma,\delta^\triangledown B\rangle -\langle \sigma u,Bu\rangle dV\\
	\end{align*}
	Then we have $\int_M\langle \mathscr{S}^{(\nabla,u)}(d^\triangledown\sigma,-\sigma u),(B,w)\rangle dV=0$ and we finish the proof.
\end{proof}

Then $\mathscr{S}^{(\nabla,u)}\mid_\mathcal{C}: \mathcal{C} \to\mathcal{C}$ is a self adjoint and elliptic operator. The eigenvalues of $\mathscr{S}^{(\nabla,u)}$ are given by
 \[\lambda_1  \leq \lambda_2 \leq \cdots \to +\infty. \]
 Similar as in \cite{JL}, we define the weakly stability of Yang-Mills-Higgs functional at $\left( \nabla,u \right) $ as following.

\begin{dy}\label{weakly stable}
	Assume $(\nabla,u)$ satisfies (\ref{YMH}), then it is called weakly stable if $\lambda_1\ge0$ and stable if $\lambda_1>0$.
\end{dy} 

\section{Stability of Yang-Mills-Higgs pairs on $S^n$} 

In this section, we prove Theorem \ref{mainthm1}. Now, we assume $\left( M,g \right) $ is the standard Euclidean sphere $S^n $. In \cite{LS}, the conformal Killing vector fields of $S^n $ play an important role in studying the non-existence of stable varifolds or currents. Similar methods have been applied to study weakly stable Yang-Mills connections on $S^n $ in \cite{JL}. The conformal Killing vector fields of $S^n $ are the gradients of eigenfunctions corresponding to the first non-zero eigenvalue of the Laplace operator. Let us summarize the properties of these vector fields as follow.

\begin{mt}\label{Killing}
	For any $v=(v^1,...,v^{n+1})\in\mathbb{R}^{n+1}$, let $F_v(x):\mathbb{R}^{n+1}\to\mathbb{R}$, $x\mapsto v\cdot x$ be the inner product of $v$ and $x$. Define $f_v=F_v\mid_{S^n}$ to be the restriction of $F_v$ to $S^n$. Then \begin{equation}\label{Vector}
		V=\operatorname{grad}(f_v)=\sum_{i=1}^{n+1}(v^j-(v\cdot x)x^j)\frac{\partial}{\partial x^j}=v-\left( v\cdot x \right)x 
	\end{equation} 
is a conformal Killing vector fields on $S^n$ and satisfies
	%	\begin{equation}\label{Killing}
		%%		\begin{split}
			%%			&(1)\ D_XV=-f_vX,\  \textrm{for any vector field}\  X,\\
			%%			&(2)\ D^\ast DV=V.
			%%		\end{split}
		%	\end{equation}
	\begin{enumerate}[(1).]
		\item $ D_X V=-f_v X,  $
		\item $ D^\ast DV=V. $
	\end{enumerate}
\end{mt}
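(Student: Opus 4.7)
\noindent\textbf{Proof plan for Proposition \ref{Killing}.} The natural way to approach both identities is to view $S^{n}$ as an isometrically embedded submanifold of $\mathbb{R}^{n+1}$ and exploit the Gauss formula: if $\bar D$ denotes the flat Euclidean connection on $\mathbb{R}^{n+1}$, then for tangent vector fields $X,W$ along $S^n$, the Levi-Civita connection of the round metric is $D_X W = \bar D_X W - \langle \bar D_X W,\,x\rangle\,x$. The vector field $V$ is precisely the tangential projection of the constant field $v$, because $\bar{\nabla}F_{v}=v$ gives $\operatorname{grad}(f_{v})=v-(v\cdot x)x=V$. This identification is what makes the calculations trivial in ambient coordinates.

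\noindent\textbf{Part (1).} I would differentiate $V=v-(v\cdot x)x$ viewed as an $\mathbb{R}^{n+1}$-valued map. For any tangent vector $X$ at $x\in S^{n}$, since $\bar D$ is flat one has $\bar D_{X}v=0$, $X(v\cdot x)=v\cdot X$, and $\bar D_{X}x=X$. Therefore
\begin{equation*}
\bar D_{X}V = -(v\cdot X)\,x - (v\cdot x)\,X = -(v\cdot X)\,x - f_{v}X.
\end{equation*}
The first term is normal to $S^{n}$ and the second is tangential (because $X\perp x$), so the Gauss formula immediately yields $D_{X}V=-f_{v}X$, establishing (1).

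\noindent\textbf{Part (2).} I would apply (1) together with the definition $D^{\ast}D V=-\sum_{j}\bigl(D_{e_{j}}D_{e_{j}}V-D_{D_{e_{j}}e_{j}}V\bigr)$ for a local orthonormal frame $\{e_{j}\}$ of $TS^{n}$. Using (1) in the inner covariant derivative,
\begin{equation*}
D_{e_{j}}D_{e_{j}}V = D_{e_{j}}(-f_{v}e_{j}) = -e_{j}(f_{v})\,e_{j} - f_{v}\,D_{e_{j}}e_{j},
\end{equation*}
while $D_{D_{e_{j}}e_{j}}V=-f_{v}\,D_{e_{j}}e_{j}$ by (1) again. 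The $f_{v}D_{e_{j}}e_{j}$ contributions cancel in the difference, leaving $D_{e_{j}}D_{e_{j}}V-D_{D_{e_{j}}e_{j}}V=-e_{j}(f_{v})\,e_{j}$. Since $V=\operatorname{grad}(f_{v})$, one has $e_{j}(f_{v})=\langle V,e_{j}\rangle$, and summing yields $D^{\ast}DV=\sum_{j}\langle V,e_{j}\rangle e_{j}=V$.

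The argument is essentially a direct computation; there is no real obstacle beyond keeping track of the normal/tangential decomposition in (1) and the sign convention of the rough Laplacian in (2). The substantive geometric content being used is that the flat ambient connection on $\mathbb{R}^{n+1}$ kills $v$ and reproduces $X$ when applied to the position field $x$, after which the Gauss formula does all the work.
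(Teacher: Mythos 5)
Your proof is correct and complete; the paper itself states Proposition \ref{Killing} without proof (as a summary of known properties from the conformal geometry of $S^n$), and your argument via the tangential projection of the constant field $v$, the Gauss formula, and the cancellation of the $f_v D_{e_j}e_j$ terms in the rough Laplacian is exactly the standard computation the authors are implicitly relying on. No gaps.
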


\begin{zj}
	In fact, the space $\mathscr{V}$ of all $V$ defined above is the orthogonal complement to the Killing vector fields in the space of all conformal vector fields on $S^n $, i.e.
	\[\mathfrak{conf}\left( S^n  \right) =\mathfrak{isom}\left( S^n \right)\oplus \mathscr{V} .\]
\end{zj}

Similar as in \cite{JL}, we choose the variation of connection to be $B_v=i_VR^\triangledown$, where $i_V$ is the contraction about $V$. The corresponding variation of the Higgs field $w_v$ satisfies
\begin{align*}
	\frac12(w_v\otimes u^\ast-u\otimes w_v^\ast)=\delta^\triangledown i_VR^\triangledown=-\delta^\triangledown R^\triangledown(V)=\frac12(\nabla_V u\otimes u^\ast-u\otimes(\nabla_Vu)^\ast).
\end{align*}
Hence, $w_v=\nabla_Vu$ satisfies $(B_v,w_v)\in\mathcal{C}$. If we define a quadratic form $\mathscr{L}^{(\nabla,u)}$ on $\mathcal{C}$ by setting
\begin{equation}\label{L}
	\mathscr{L}^{(\nabla,u)}(B,w)=\int_M\langle  \mathscr{S}^{(\nabla,u)}(B,w),(B,w) \rangle  dV, 
\end{equation}
then $\left.\frac{d^2}{dt^2}\mathscr{A}(\nabla^t,u^t)\right|_{t=0}=\mathscr{L}^{(\nabla,u)}(B,w)$ for any $(B,w)\in\mathcal{C}$.  $Q(v_1,v_2)=\mathscr{L}^{(\nabla,u)}(B_{v_1},w_{v_2})$ can be viewed as a quadratic form.
\begin{yl}\label{L-VV}
	Assume $(\nabla,u)$ is a Yang-Mills-Higgs pair on $S^n$. Then for any $v\in\mathbb{R}^{n+1}$, we have
	$$\mathscr{L}^{(\nabla,u)}(i_VR^\triangledown,\nabla_Vu)=\int_{S^n}(4-n)|i_VR^\triangledown|^2+(2-n)|\nabla_Vu|^2-2f_v(\langle i_VR^\triangledown,\delta^\triangledown R^\triangledown\rangle +\langle \delta^\triangledown d^\triangledown u,\nabla_Vu\rangle )dV.$$
\end{yl}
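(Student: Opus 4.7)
The plan is to substitute $B = i_V R^\triangledown$ and $w = \nabla_V u$ into the explicit formula for $\mathscr{S}^{(\nabla,u)}(B,w)$ displayed just before the statement, and then reduce the resulting integrand by systematic use of the Bochner-Weitzenb\"ock formula (in the form valid on $S^n$, where $\operatorname{Ric} = (n-1)\operatorname{Id}$ and $\operatorname{Ric}\wedge\operatorname{Id} + 2R_M = 2(n-2)\operatorname{Id}$), the identities $D_X V = -f_v X$ and $D^\ast D V = V$ from Proposition \ref{Killing}, the second Bianchi identity $d^\triangledown R^\triangledown = 0$, and the Yang-Mills-Higgs equations (\ref{YMH}). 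It is convenient to split the integrand into three blocks: a curvature-only block $\langle \Delta^\triangledown B + \mathfrak{R}^\triangledown(B),B\rangle$, a Higgs-only block $\langle \delta^\triangledown d^\triangledown w + \lambda\langle u,w\rangle u - \tfrac{\lambda}{2}(1-|u|^2)w, w\rangle$, and a mixed block coming from the $u$-coupling terms.

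For the curvature block, the Bochner formula yields
\[
\int_{S^n} \langle \Delta^\triangledown B + \mathfrak{R}^\triangledown(B), B\rangle\,dV = \int_{S^n} |\nabla B|^2 + (n-1)|B|^2 + 2\langle \mathfrak{R}^\triangledown(B), B\rangle\,dV.
\]
The identity $D_X V = -f_v X$ gives the pointwise formula $(\nabla_X B)(Y) = (\nabla_X R^\triangledown)(V, Y) - f_v R^\triangledown(X, Y)$, so expanding $|\nabla B|^2$ and integrating by parts using the second Bianchi identity reproduces the Bourguignon-Lawson \cite{JL} calculation verbatim, giving
\[
\int_{S^n} (4-n)|i_V R^\triangledown|^2 \,dV - 2\int_{S^n} f_v\langle i_V R^\triangledown, \delta^\triangledown R^\triangledown\rangle\,dV.
\]

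For the Higgs block, the analogous commutator calculation applies: start from $\int \langle \delta^\triangledown d^\triangledown w,w\rangle dV = \int |d^\triangledown w|^2 dV$, then commute $\nabla$ past $\nabla_V$ using
\[
\nabla_X \nabla_V u = \nabla_V \nabla_X u + R^\triangledown(X,V)u + \nabla_{[X,V]}u
\]
with $[X,V] = -f_v X - D_V X$, and integrate by parts invoking $D^\ast D V = V$. The $f_v$ corrections combine with the mass terms $\lambda\langle u,w\rangle u - \tfrac{\lambda}{2}(1-|u|^2)w$ via the second Yang-Mills-Higgs equation to yield the factor $(2-n)|\nabla_V u|^2 - 2 f_v\langle \delta^\triangledown d^\triangledown u,\nabla_V u\rangle$. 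The curvature commutator $R^\triangledown(X,V)u$ produced at this step carries the Higgs-to-curvature cross-coupling. The remaining mixed-block contributions — namely $\tfrac{1}{2}(Bu\otimes u^\ast - u\otimes(Bu)^\ast) + d^\triangledown u\otimes w^\ast - w\otimes(d^\triangledown u)^\ast$ paired with $B$, together with $\tfrac{1}{2}(w\otimes u^\ast - u\otimes w^\ast)u - 2B\llcorner d^\triangledown u$ paired with $w$ — are evaluated by repeated use of the adjoint identity $\langle \tfrac{1}{2}(u\otimes\phi^\ast - \phi\otimes u^\ast),\varphi\rangle = -\langle \phi,\varphi u\rangle$ together with the first Yang-Mills-Higgs equation $\delta^\triangledown R^\triangledown = -\tfrac12(d^\triangledown u\otimes u^\ast - u\otimes(d^\triangledown u)^\ast)$. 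These mixed terms cancel against the $R^\triangledown(X,V)u$ commutator from the Higgs block.

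The hardest step, and the only genuine obstacle, is the bookkeeping in this final cancellation: keeping track of signs, of the ordering of $\phi u$ versus $u\phi$, and of the distinction between $u\otimes w^\ast$ and $w\otimes u^\ast$. My strategy to keep the computation honest is to convert every bracket of the form $u\otimes(\cdot)^\ast - (\cdot)\otimes u^\ast$ into a scalar pairing in $\Omega^0(E)$ via the adjoint identity \emph{before} pairing, so that each mixed term becomes an $L^2$ inner product of two sections of $E$; the Yang-Mills-Higgs equations then substitute directly, and all $\lambda$-dependent pieces collapse into the single term $-2f_v\langle \delta^\triangledown d^\triangledown u,\nabla_V u\rangle$ appearing in the claim.
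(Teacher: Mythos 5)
Your overall strategy -- substitute $(B,w)=(i_VR^\triangledown,\nabla_Vu)$ into $\mathscr{S}^{(\nabla,u)}$, commute derivatives using $D_XV=-f_vX$, and invoke the Bochner formula and the equations (\ref{YMH}) -- is the same as the paper's, but your bookkeeping of where the mixed terms cancel contains a genuine error. The claim that the curvature-only block satisfies
\[
\int_{S^n}\langle \Delta^\triangledown B+\mathfrak{R}^\triangledown(B),B\rangle\,dV=\int_{S^n}(4-n)|i_VR^\triangledown|^2-2f_v\langle i_VR^\triangledown,\delta^\triangledown R^\triangledown\rangle\,dV
\]
is false when $\delta^\triangledown R^\triangledown\neq 0$. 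The correct pointwise identity (which the paper derives by converting $\nabla^\ast\nabla\, i_VR^\triangledown$ back to $\Delta^\triangledown R^\triangledown$ through a second application of Bochner) is
\[
\Delta^\triangledown i_VR^\triangledown+\mathfrak{R}^\triangledown(i_VR^\triangledown)=i_V\Delta^\triangledown R^\triangledown+(4-n)\,i_VR^\triangledown-2f_v\,\delta^\triangledown R^\triangledown,
\]
and the term $\langle i_V\Delta^\triangledown R^\triangledown,i_VR^\triangledown\rangle=\langle i_Vd^\triangledown\delta^\triangledown R^\triangledown,i_VR^\triangledown\rangle$ does not vanish here; substituting the first equation of (\ref{YMH}) shows it equals $-|i_VR^\triangledown\cdot u|^2-2\langle d^\triangledown u,\,i_VR^\triangledown\cdot\nabla_Vu\rangle$. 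The Bourguignon--Lawson computation discards this term only because $\delta^\triangledown R^\triangledown=0$ for a Yang--Mills connection, so it cannot be reproduced ``verbatim.''

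This omission is not harmless, because it is exactly this term that kills the $\mathscr{S}_1$ mixed block: by the adjoint identity, $\langle \tfrac12(Bu\otimes u^\ast-u\otimes(Bu)^\ast)+d^\triangledown u\otimes w^\ast-w\otimes(d^\triangledown u)^\ast,\,B\rangle=|i_VR^\triangledown\cdot u|^2+2\langle d^\triangledown u,\,i_VR^\triangledown\cdot\nabla_Vu\rangle$. Your proposal routes \emph{all} mixed terms to cancel against the commutator $R^\triangledown(X,V)u$ arising in the Higgs block, but that commutator (after using (\ref{YMH})) only accounts for the mixed terms paired with $w$, namely $\langle\tfrac12(w\otimes u^\ast-u\otimes w^\ast)u,w\rangle$ (cancelled by $\langle\delta^\triangledown R^\triangledown(V)u,\nabla_Vu\rangle$) and $-2\langle B\llcorner d^\triangledown u,w\rangle$ (cancelled by $-2\langle\mathfrak{R}^\triangledown(d^\triangledown u)(V),\nabla_Vu\rangle$). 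Nothing in the Higgs block can produce the manifestly nonnegative quantity $-|i_VR^\triangledown\cdot u|^2$ needed to cancel $|Bu|^2$. So as written your three blocks do not sum to the claimed formula; the fix is simply to retain $\int\langle i_Vd^\triangledown\delta^\triangledown R^\triangledown,i_VR^\triangledown\rangle\,dV$ in the curvature block and let it absorb the $B$-paired mixed terms, which is precisely how the paper's pointwise argument proceeds.
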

\begin{proof}
	Denote $\mathscr{S}^{(\nabla,u)}=(\mathscr{S}_1,\mathscr{S}_2)$. For any $x\in S^n$, let $\{e_i\mid 1\le i\le n\}$ be a local orthonormal frame of $S^n $ near $x$. According to (\ref{Bochner}), we have
	\begin{align*}
			\Delta^\triangledown i_VR^\triangledown(e_k)=\nabla^\ast\nabla i_VR^\triangledown(e_k)+(n-1)R^\triangledown(V,e_k)+\mathfrak{R}^\triangledown(i_VR^\triangledown)(e_k).
		\end{align*}
	Since $De_k(x)=0$, at $x$ 
	\begin{align*}
			\nabla^\ast\nabla i_VR^\triangledown(e_k)&=-\sum_j\nabla_{e_j}\nabla_{e_j}i_VR^\triangledown(e_k)\\
			&=-\sum_j\nabla_{e_j}(\nabla_{e_j}i_VR^\triangledown(e_k))\\
			&=-\sum_j\nabla_{e_j}(\nabla_{e_j}(R^\triangledown(V,e_k))-R^\triangledown(V,D_{e_j}e_k))\\
			&=-\sum_j\nabla_{e_j}(\nabla_{e_j}R^\triangledown(V,e_k)-f_vR^\triangledown(e_j,e_k))\\
			&=\nabla^\ast\nabla R^\triangledown(V,e_k)-2f_v\delta^\triangledown R^\triangledown(e_k)+R^\triangledown(V,e_k)\\
			&=\Delta^\triangledown R^\triangledown(V,e_k)+(5-2n)R^\triangledown(V,e_k)-\mathfrak{R}^\triangledown(R^\triangledown)(V,e_k)-2f_v\delta^\triangledown R^\triangledown(e_k),
		\end{align*}
	where we have used (1) of Proposition \ref{Killing}. By $2\mathfrak{R}^\triangledown(i_VR^\triangledown)(e_k)-\mathfrak{R}^\triangledown(R^\triangledown)(V,e_k)=0$, we have 
	$$\Delta^\triangledown i_VR^\triangledown(e_k)+\mathfrak{R}^\triangledown(i_VR^\triangledown)(e_k)=\Delta^\triangledown R^\triangledown(V,e_k)+(4-n)R^\triangledown(V,e_k)-2f_v\delta^\triangledown R^\triangledown(e_k).$$ 
	Similarly, by the definition of curvature and $[X,Y]=D_XY-D_YX$,  we have
	\begin{align*}
			\delta^\triangledown d^\triangledown\nabla_Vu=&-\sum_j\nabla_{e_j}\nabla_{e_j}\nabla_Vu\\
			=&-\sum_j\nabla_{e_j}(R^\triangledown(e_j,V)u+\nabla_V\nabla_{e_j}u+\nabla_{[e_j,V]}u)\\
			=&\delta^\triangledown R^\triangledown(V)u-2\mathfrak{R}^\triangledown(d^\triangledown u)(V)\\
			&-\sum_j\nabla_V\nabla_{e_j}\nabla_{e_j}u-2\nabla_{[e_j,V]}\nabla_{e_j}u-[R^\triangledown(e_j,[e_j,V]),u]-\nabla_{[e_j,[e_j,V]]}u.
	\end{align*}
	At $x$, we have 
	$$R^\triangledown(e_j,[e_j,V])=0,$$ 
	\begin{align*}
			-\sum_j\nabla_V\nabla_{e_j}\nabla_{e_j}u&=\nabla_V\delta^\triangledown d^\triangledown u-\sum_j\nabla_V\nabla_{D_{e_j}e_j}u\\
			&=\nabla_V\delta^\triangledown d^\triangledown u-\sum_j \left( R^\triangledown(V,D_{e_j}e_j)u+\nabla_{D_{e_j}e_j}\nabla_Vu+\nabla_{[V,D_{e_j}e_j]}u \right) \\
			&=\nabla_V\delta^\triangledown d^\triangledown u-\sum_j\nabla_{[V,D_{e_j}e_j]}u\\
			&=\nabla_V\delta^\triangledown d^\triangledown u-\sum_j\nabla_{D_VD_{e_j}e_j}u,
		\end{align*}
	$$-2\sum_j\nabla_{[e_j,V]}\nabla_{e_j}u=-2f_v\delta^\triangledown d^\triangledown u,$$
	and
	\begin{align*}
			-\sum_j[e_j,[e_j,V]]=&-\sum_jD_{e_j}[e_j,V]\\
			=&V+\sum_jD_{e_j}D_Ve_j\\
			=&V+\sum_j \left( R(e_j,V)e_j+D_VD_{e_j}e_j \right) \\
			=&(2-n)V+\sum_jD_VD_{e_j}e_j,
		\end{align*}
	where we use $\sum_jR(e_j,X)e_j=(1-n)X$ on $S^n$. Hence
	$$\delta^\triangledown d^\triangledown\nabla_Vu=\delta^\triangledown R^\triangledown(V)u-2\mathfrak{R}^\triangledown(d^\triangledown u)(V)+\nabla_V\delta^\triangledown d^\triangledown u+(2-n)\nabla_Vu-2f_v\delta^\triangledown d^\triangledown u.$$
	By equation (\ref{YMH}), at $x$ we have
	\begin{align*}
		&\sum_k\langle \nabla_V(\delta^\triangledown R^\triangledown(e_k)),R^\triangledown(V,e_k)\rangle   \\
		=&\sum_k\langle \frac12\nabla_V(u\otimes(\nabla_{e_k}u)^\ast-\nabla_{e_k}u\otimes u^\ast),R^\triangledown(V,e_k)\rangle \\
		=&\sum_k\langle \nabla_Vu,R^\triangledown(V,e_k)\nabla_{e_k}u\rangle +\langle u,R^\triangledown(V,e_k)\cdot\nabla_V\nabla_{e_k}u\rangle \\
		=&-\langle i_VR^\triangledown,\frac12(d^\triangledown u\otimes (\nabla_Vu)^\ast-\nabla_Vu\otimes(d^\triangledown u)^\ast)\rangle -\sum_k\langle R^\triangledown(V,e_k)u,\nabla_V\nabla_{e_k}u\rangle .
	\end{align*}
	And
	\begin{displaymath}
		\sum_k\langle \nabla_{e_k}(\delta^\triangledown R^\triangledown(V)),R^\triangledown(V,e_k)\rangle =\langle i_VR^\triangledown,\frac12(d^\triangledown u\otimes (\nabla_Vu)^\ast-\nabla_Vu\otimes(d^\triangledown u)^\ast)\rangle -\sum_k\langle R^\triangledown(V,e_k)u,\nabla_{e_k}\nabla_Vu\rangle .
	\end{displaymath}	
	Note that at $x$, by the equation (\ref{YMH}) we have $$\sum_k\langle \nabla_{[V,e_k]}u,R^\triangledown(V,e_k)u\rangle =\sum_kf_v\langle u,R^\triangledown(e_k,V)\nabla_{e_k}u\rangle =-f_v\langle i_VR^\triangledown,\delta^\triangledown R^\triangledown\rangle .$$
	Hence by $R^\triangledown(V,e_k)u=\nabla_V\nabla_{e_k}u-\nabla_{e_k}\nabla_Vu-\nabla_{[V,e_k]}u$, we have
	\begin{align*}
		&\langle i_V\Delta^\triangledown R^\triangledown,i_VR^\triangledown\rangle \\
		=&\sum_k\langle \nabla_V(\delta^\triangledown R^\triangledown(e_k))-\delta^\triangledown R^\triangledown(D_Ve_k)-\nabla_{e_k}(\delta^\triangledown R^\triangledown(V))+\delta^\triangledown R^\triangledown(D_{e_k}V),R^\triangledown(V,e_k)\rangle \\
		=&-\langle i_VR^\triangledown,(d^\triangledown u\otimes(\nabla_Vu)^\ast-\nabla_Vu\otimes(d^\triangledown u)^\ast)\rangle -\langle i_VR^\triangledown,\frac12((i_VR^\triangledown\cdot u)\otimes u^\ast-u\otimes(i_VR^\triangledown\cdot u)^\ast)\rangle .
	\end{align*}
	And thus $$\langle \mathscr{S}_1(i_VR^\triangledown,\nabla_Vu),i_VR^\triangledown\rangle =(4-n)|i_VR^\triangledown|^2-2f_v\langle \delta^\triangledown R^\triangledown,i_VR^\triangledown\rangle .$$
	On the other hand, by equation (\ref{YMH}) and note that $-2\mathfrak{R}^\triangledown(d^\triangledown u)(V)-2i_VR^\triangledown\llcorner d^\triangledown u=0$, we have
	$$\mathscr{S}_2(i_VR^\triangledown,\nabla_Vu)=(2-n)\nabla_Vu-2f_v\delta^\triangledown d^\triangledown u.$$
	Then we finish the proof.
\end{proof}

\begin{yl}\label{trace}
	Let $\{v_i\mid1\le i\le n+1\}\in\mathbb{R}^{n+1}$ be an orthogonal vector in $\mathbb{R}^{n+1}$ and the corresponding Killing field is $V_i$. Then
	\begin{equation}
		\sum_{i=1}^{n+1}\mathscr{L}^{(\nabla,u)}(i_{V_i}R^\triangledown,\nabla_{V_i}u)=2(4-n)\int_{S^n}|R^\triangledown|^2dx+(2-n)\int_M|d^\triangledown u|^2dV.
	\end{equation}
\end{yl}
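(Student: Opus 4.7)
\medskip

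\noindent\textbf{Proof proposal.} The plan is to invoke Lemma \ref{L-VV} termwise in $i$ and then collapse the resulting sum using three pointwise identities on $S^n$, each of which reduces to a simple orthonormal‑basis computation. Write $V_i(x)=v_i-(v_i\cdot x)x$ and $f_{v_i}(x)=v_i\cdot x$. Summing the formula of Lemma \ref{L-VV} over $i=1,\dots,n+1$ gives
\begin{equation*}
\sum_{i=1}^{n+1}\mathscr{L}^{(\nabla,u)}(i_{V_i}R^\triangledown,\nabla_{V_i}u)
=\int_{S^n}\!\Bigl((4-n)\sum_i|i_{V_i}R^\triangledown|^2+(2-n)\sum_i|\nabla_{V_i}u|^2\Bigr)dV-2\int_{S^n}\!\sum_i f_{v_i}\bigl(\langle i_{V_i}R^\triangledown,\delta^\triangledown R^\triangledown\rangle+\langle\delta^\triangledown d^\triangledown u,\nabla_{V_i}u\rangle\bigr)dV.
\end{equation*}

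\noindent The first step is to dispose of the cross term. At each point $x\in S^n$, using that $\{v_i\}$ is orthonormal in $\mathbb{R}^{n+1}$, one has $\sum_i(v_i\cdot x)v_i=x$ and $\sum_i(v_i\cdot x)^2=|x|^2=1$, so
\begin{equation*}
\sum_i f_{v_i}(x)\,V_i(x)=\sum_i(v_i\cdot x)v_i-\Bigl(\sum_i(v_i\cdot x)^2\Bigr)x=x-x=0.
\end{equation*}
By linearity of $v\mapsto i_VR^\triangledown$ and $v\mapsto\nabla_Vu$ this forces $\sum_i f_{v_i}i_{V_i}R^\triangledown=0$ and $\sum_i f_{v_i}\nabla_{V_i}u=0$ pointwise, so the entire cross term in the display above vanishes.

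\noindent The second step evaluates the two remaining sums. Both depend only on the linear map $v\mapsto v-(v\cdot x)x$, i.e.\ the orthogonal projection of $\mathbb{R}^{n+1}$ onto $T_xS^n$. Since these sums are orthogonal invariants of the chosen basis $\{v_i\}$, we may at each fixed $x$ evaluate them in the particular orthonormal basis $\{e_1,\dots,e_n,x\}$ of $\mathbb{R}^{n+1}$ with $\{e_j\}$ an orthonormal basis of $T_xS^n$. The vector corresponding to $v_{n+1}=x$ gives $V_{n+1}=0$, while for $i\le n$ one has $V_i=e_i$. Therefore
\begin{equation*}
\sum_i|\nabla_{V_i}u|^2=\sum_{j=1}^n|\nabla_{e_j}u|^2=|d^\triangledown u|^2,\qquad \sum_i|i_{V_i}R^\triangledown|^2=\sum_{j,k=1}^n|R^\triangledown(e_j,e_k)|^2=2|R^\triangledown|^2,
\end{equation*}
the last equality using the convention $|R^\triangledown|^2=\tfrac12\sum_{j,k}|R^\triangledown(e_j,e_k)|^2$.

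\noindent Plugging these into the display yields the asserted identity. No real obstacle arises; the only point requiring care is tracking the factor $2$ in $2(4-n)$, which comes from the $\tfrac12$ in the definition of $|R^\triangledown|^2$. The whole argument hinges on the observation $\sum_i f_{v_i}V_i=0$, which is what makes the Yang--Mills--Higgs equations drop out of the sum and leaves a clean quantity to bound in the stability argument that follows.
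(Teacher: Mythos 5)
Your argument is correct and follows essentially the same route as the paper: both kill the cross term via the pointwise identity $\sum_i f_{v_i}V_i=0$ and then evaluate the remaining sums by the basis-independence of the trace of the quadratic form, choosing the adapted orthonormal basis $\{x,e_1,\dots,e_n\}$ at each point (with the factor $2$ coming from the convention $|R^\triangledown|^2=\tfrac12\sum_{j,k}|R^\triangledown(e_j,e_k)|^2$, exactly as in the paper). The only cosmetic difference is that the paper verifies $\sum_i f_{v_i}V_i=0$ for the standard basis, while you do it for an arbitrary orthonormal basis.
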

\begin{proof}
	Assume $v_i$ is the vector in $\mathbb{R}^{n+1}$ such that the $ i $-th component is $1$ and the others are $0$, then at $x=(x^1,...,x^{n+1})$, we have $f_{v_i}(x)=x^i$ and \[\sum_if_{v_i}(x)V_i=\sum_ix^i\frac{\partial}{\partial x^i}-\sum_{i,j}(x^i)^2x^j\frac{\partial}{\partial x^j}=0. \]
	Then according to Lemma \ref{L-VV}, 
	\begin{equation*}
        \sum\limits_i\mathscr{L}^{(\nabla,u)}(i_{V_i}R^\triangledown,\nabla_{V_i} u)=\sum_i \int_{S^n}q\left( v_i,v_i \right) dV
	\end{equation*}
where at any $x\in S^n$, $q$ is a quadratic form on $\mathbb{R}^{ n+1  }$ defined by 
	\begin{equation}\label{q}
		\begin{split}
			q(v,w)=(4-n)\langle  i_VR^\triangledown,i_WR^\triangledown \rangle  +(2-n)\langle  \nabla_Vu,\nabla_Wu \rangle 
		\end{split}
	\end{equation}
	and $V,W\in T S^n$ with respect to $v,w\in \mathbb{R}^{ n+1 }$ defined in Proposition \ref{Killing}. 
	
	Now, we compute the value $q\left( v,w \right) $ at $x\in S^n $. Since $q$ is a quadratic form on $\mathbb{R}^{n+1} $, the trace $\sum\limits_{i=1} q\left( v_i,v_i \right) $ is independent of the choice of  basis of $\mathbb{R}^{ n+1 }$. We assume that $\left\{  e_j \right\}_{ j=1 }^n $ is any orthonormal basis for $T_x S^n $. Then $\left\{  e_0=x ,  e_1,e_2,\cdots, e_n \right\}$ forms an orthonormal basis for $\mathbb{R}^{n+1} $. In particular, at $x\in S^n $, we have
	\begin{align*}
		\sum_{i=1}^{n+1}q(v_i,v_i)=\sum_{j=0}^{ n }q(e_j,e_j).
	\end{align*}
From Proposition \ref{Killing}, at $x$, the Killing vector fields $\varepsilon_0,\varepsilon_1,\cdots,\varepsilon_n$ with respect to $e_0,e_1,e_2,\cdots,e_n $ are 
\[\varepsilon _0=0,\varepsilon _1=e_1,\cdots,\varepsilon _n=e_n .\]

Since $\left\{  e_j \right\}_{ j=1 }^n $ forms an orthonormal basis for $T_x S^n $, we have
\[\sum_{j=1}^n \left| i_{ e_j }R^\nabla \right|=2\left| R^\nabla \right|^2.\] 
Hence, at $x$,
	\begin{align*}
	\sum_{i=1}^{n+1}q(v_i,v_i)=\sum_{j=1}^{ n }q(e_j,e_j)=\left( 4-n \right) \sum_{ j=1 }^n \left| i_{e_j} R^\nabla\right|^2 +\left( 2-n \right)\sum_{ j=1 }^n \left| \nabla_{e_j} u\right|^2 =2\left( 4-n \right) \left| R^\nabla \right|^2 +\left( 2-n \right)\left| d^\nabla u \right|^2 
\end{align*}
and we complete the proof of this lemma.
\end{proof}

From the lemma above, we can immediately prove Theorem \ref{mainthm1} on $S^n (n \geq 5)$.

\begin{thm}
	Assume $(\nabla,u)$ is a Yang-Mills-Higgs pair on $S^n$ for $n\ge 5$, then $(\nabla,u)$ is weakly stable if and only if  $R^\triangledown=0$, $d^\triangledown u=0$ and $|u|=1$.
\end{thm}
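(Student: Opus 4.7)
The plan is to apply Lemma \ref{trace} with the admissible variations supplied by Lemma \ref{L-VV}, read off the vanishing of the curvature and the covariant derivative of $u$ directly, and then handle $|u|=1$ with a supplementary variational argument using parallel sections of the resulting flat bundle.

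For the forward direction, pick any orthonormal basis $\{v_i\}_{i=1}^{n+1}$ of $\mathbb{R}^{n+1}$ and form the admissible pairs $(i_{V_i}R^\triangledown,\nabla_{V_i}u)\in\mathcal{C}$. Weak stability forces $\mathscr{L}^{(\nabla,u)}(i_{V_i}R^\triangledown,\nabla_{V_i}u)\ge 0$ for every $i$, and Lemma \ref{trace} then gives
\begin{equation*}
0\ \le\ 2(4-n)\int_{S^n}|R^\triangledown|^2\,dV+(2-n)\int_{S^n}|d^\triangledown u|^2\,dV.
\end{equation*}
Since $n\ge 5$ makes both coefficients strictly negative, both integrals must vanish, yielding $R^\triangledown=0$ and $d^\triangledown u=0$. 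With $d^\triangledown u=0$, the function $|u|^2$ is constant on $S^n$, and the second equation of (\ref{YMH}) collapses to $\tfrac{\lambda}{2}(1-|u|^2)u=0$, so either $|u|\equiv 1$ or $u\equiv 0$ (assuming $\lambda>0$). To rule out $u\equiv 0$, note that $R^\triangledown=0$ on the simply connected sphere $S^n$ makes $E$ a trivial flat bundle, hence it admits a nonzero parallel section $w$; with $u=0$ one checks that $(0,w)\in\mathcal{C}$ and a direct computation using the definition of $\mathscr{S}^{(\nabla,u)}$ yields
\begin{equation*}
\mathscr{L}^{(\nabla,0)}(0,w)\ =\ \int_{S^n}\bigl(|d^\triangledown w|^2-\tfrac{\lambda}{2}|w|^2\bigr)\,dV\ =\ -\tfrac{\lambda}{2}\int_{S^n}|w|^2\,dV\ <\ 0,
\end{equation*}
contradicting weak stability. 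Therefore $|u|\equiv 1$.

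The reverse direction is essentially free: when $R^\triangledown=0$, $d^\triangledown u=0$, and $|u|=1$, the functional $\mathscr{A}$ vanishes at $(\nabla,u)$, which is the absolute infimum of its pointwise nonnegative integrand, so every second variation is automatically nonnegative. The main obstacle I anticipate is the $|u|=1$ step: Lemma \ref{trace} delivers the curvature and connection conclusions in a single stroke, but pinning down the norm of the Higgs field requires the separately chosen variation above, and one must check the admissibility constraint $\delta^\triangledown B=\tfrac{1}{2}(w\otimes u^\ast-u\otimes w^\ast)$ at each stage as well as keep track of the role of $\lambda>0$.
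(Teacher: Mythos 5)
Your proposal is correct and follows essentially the same route as the paper: summing $\mathscr{L}^{(\nabla,u)}(i_{V_i}R^\triangledown,\nabla_{V_i}u)$ via Lemma \ref{trace} to kill $R^\triangledown$ and $d^\triangledown u$, then excluding $u\equiv 0$ by testing with a parallel section $(0,w)\in\mathcal{C}$. Your additional remarks (why a nonzero parallel section exists on the flat bundle over the simply connected $S^n$, and the implicit need for $\lambda>0$ in the $|u|=1$ step) are sound refinements of details the paper leaves tacit.
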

\begin{proof}
	If $(\nabla,u)$ satisfies $R^\triangledown=0$, $d^\triangledown u=0$ and $|u|=1$, then $\mathscr{A}(\nabla,u)=0$ is a minimum of $\mathscr{A}$ and $(\nabla,u)$ is obviously weakly stable. 
	
	On the other hand, assume $(\nabla,u)$ is weakly stable, then for any $v\in\mathbb{R}^{n+1}$, $\mathscr{L}^{(\nabla,u)}(i_VR^\triangledown,\nabla_Vu)\ge0$. By Lemma \ref{trace}, we have $R^\triangledown=0$ and $d^\triangledown u=0$. From Equation \eqref{YMH}, we have $u=0$ or $|u|=1$. However, $\left( \nabla,0 \right) $	can not be weakly stable due to the expression of $\mathscr{A}$. In fact, we choose a nonzero section $w\in\Omega^0(E)$ such that $d^\triangledown w=0$ and perturb $\left( \nabla,u \right)$ along $\left( 0,w \right) \in \mathcal{C}$. Then 
	\[\mathscr{L}^{ \left( \nabla,0 \right)  }\left( 0,w \right) =-\frac\lambda2\int_{S^n}|w|^2dV<0\] 
	which contradicts with the weakly stable condition.  Thus $|u|=1$.
\end{proof}

The case when $n=4$ is similar as the proof above. First, we can get $ d^\triangledown u=0$. Then the Yang-Mills condition can be obtained according to Equation \eqref{YMH}.

\section{The Higgs fields of $\Omega^0(\mathfrak{g}_E)$}
In this section, we consider the Yang-Mills-Higgs functional 
\begin{equation}\label{functional2}
	\mathscr{A}(\nabla,\Phi)=\frac12\int_M|R^\triangledown|^2+|d^\triangledown \Phi|^2+\frac\lambda4(1-|\Phi|^2)^2dV,
\end{equation}
 where $\Phi\in\Omega^0(\mathfrak{g}_E)$. Assume that $(\nabla,\Phi)$ is a Yang-Mills-Higgs pair satisfying the equation (\ref{YMH2}) and $(\nabla^t,\Phi^t)$ is a curve on \[\tilde{\mathcal{A}}=\{(\tilde\nabla,\tilde\Phi)\mid \tilde\nabla \textrm{ is a connection of }E,\ \tilde\Phi\in\Omega^0(\mathfrak{g}_E)\}\] such that $\nabla^0=\nabla$ and $\Phi^0=\Phi$. If we assume $\frac{d}{dt}(\nabla^t,\Phi^t)\mid_{t=0}=(B,\phi)$, then the second variation of $\mathscr{A}$ is
\begin{equation}
	\begin{split}
		\left.\frac{d^2}{dt^2}\mathscr{A}(\nabla^t,\Phi^t)\right|_{t=0}=\int_M&\langle \delta^\triangledown d^\triangledown B+\mathfrak{R}^\triangledown(B)-[[B,\Phi],\Phi],B\rangle+2\langle  d^\triangledown\Phi,[B,\phi]\rangle\\
		&+2\langle [B,\Phi],d^\triangledown\phi\rangle+\langle \delta^\triangledown d^\triangledown\phi+\lambda\langle \Phi,\phi\rangle\Phi-\frac\lambda2(1-|\Phi|^2)\phi,\phi\rangle dV.
	\end{split}
\end{equation}
For any gauge transformation $g\in\mathcal{G}$, $g$ acts on $(\nabla,\Phi)$ such that
$$g\cdot(\nabla,\Phi)=(\nabla^g,\Phi^g)=(g\circ\nabla\circ g^{-1},g\circ\Phi\circ g^{-1}).$$
Then for any $\sigma\in\Omega^0(\mathfrak{g}_E)$, assume $g_t=exp(t\sigma)$ is a family of gauge transformations. The variation direction of $(\nabla,u)$ along $\sigma$ is
\begin{align*}
	\frac{d}{dt}(\nabla^{g_t},\Phi^{g_t})\mid_{t=0}=(-d^\triangledown\sigma,[\sigma,\Phi]).
\end{align*}
Similar to the case when the Higgs fields take values in $\Omega^0(E)$, define
\begin{align*}
	\zeta:\Omega^0(\mathfrak{g}_E)&\to\Omega^1(\mathfrak{g}_E)\times\Omega^0(\mathfrak{g}_E),\\
	\sigma&\mapsto(-d^\triangledown\sigma,[\sigma,\Phi]),
\end{align*}
and $\mathcal{C}=Im(\zeta)^\perp$. By direct calculation, we have
\begin{equation}
	\mathcal{C}=\{(B,\phi)\in\Omega^1(\mathfrak{g}_E)\times\Omega^0(\mathfrak{g}_E)\mid\delta^\triangledown B=[\Phi,\phi]\}.
\end{equation}
On $\mathcal{C}$, we have 
\begin{align*}
	2\int_M\langle  [B,\Phi],d^\triangledown\phi \rangle  dV&=2\int_M\langle  [\Phi,d^\nabla \phi],B \rangle  dV=2\int_M\langle  d^\nabla \left[ \Phi,\phi \right]-\left[ d^\nabla \Phi ,\phi \right],B \rangle  dV\\
	&=2\int_M\langle  \delta^\triangledown B,[\Phi,\phi] \rangle  -\langle  B, [d^\triangledown\Phi,\phi] \rangle  dV\\
	&=\int_M\langle  [[\Phi,\phi],\Phi],\phi \rangle  +\langle  d^\triangledown\delta^\triangledown B,B \rangle  -2\langle  B,[d^\triangledown\Phi,\phi] \rangle  dV.
\end{align*}
Then if $\left( B,\phi \right) \in \mathcal{C}$, the second variation formula of $\mathscr{A}$ can be written as
\begin{equation}
	\begin{split}
		\left.\frac{d^2}{dt^2}\mathscr{A}(\nabla^t,\Phi^t)\right|_{t=0}=\int_M&\langle  \Delta^\triangledown B+\mathfrak{R}^\triangledown(B)-[[B,\Phi],\Phi]-2[d^\triangledown\Phi,\phi],B \rangle  \\
		&+\langle  \delta^\triangledown d^\triangledown \phi+[[\Phi,\phi],\Phi]+2d^\triangledown\Phi\llcorner B+\lambda\langle  \Phi,\phi \rangle  \Phi-\frac\lambda2(1-|\Phi|^2)\phi,\phi \rangle  dV,
	\end{split}
\end{equation}
where $d^\triangledown\Phi\llcorner B=\sum\limits_{i=1}^n[\nabla_{e_i}\Phi,B(e_i)]$. Hence, we can define an operator \[\mathscr{S}^{(\nabla,\Phi)}: \Omega^1(\mathfrak{g}_E)\times\Omega^0(\mathfrak{g}_E)\to\Omega^1(\mathfrak{g}_E)\times\Omega^0(\mathfrak{g}_E)\] 
where 
\begin{equation}
	\begin{split}
		\mathscr{S}^{(\nabla,\Phi)}(B,\phi)=(&\Delta^\triangledown B+\mathfrak{R}^\triangledown(B)-[[B,\Phi],\Phi]-2[d^\triangledown\Phi,\phi],\\
		&\delta^\triangledown d^\triangledown\phi+[[\Phi,\phi],\Phi]+2d^\triangledown\Phi\llcorner B+\lambda\langle \Phi,\phi\rangle \Phi-\frac\lambda2(1-|\Phi|^2)\phi).
	\end{split}
\end{equation}
\begin{yl}
	$\mathscr{S}^{(\nabla,\Phi)}(\mathcal{C})\subset\mathcal{C}$.
\end{yl}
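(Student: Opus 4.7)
The strategy is to mimic the proof of the analogous lemma in Section~2 for Higgs fields in $\Omega^0(E)$. First I would verify, exactly as sketched just before the statement, that $\mathscr{S}^{(\nabla,\Phi)}$ is formally self-adjoint on the ambient space $\Omega^1(\mathfrak{g}_E)\times\Omega^0(\mathfrak{g}_E)$: the Laplacian and $\mathfrak{R}^\nabla$ pieces are self-adjoint by construction, the double bracket $-[[B,\Phi],\Phi]$ is symmetric because $\langle[[B,\Phi],\Phi],B'\rangle=\langle[B,\Phi],[B',\Phi]\rangle$ by ad-invariance, and the cross terms $-2[d^\nabla\Phi,\phi]$ and $2\,d^\nabla\Phi\llcorner B$ are mutually adjoint. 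Granting this, $\mathscr{S}^{(\nabla,\Phi)}(B,\phi)\in\mathcal{C}=\mathrm{Im}(\zeta)^\perp$ is equivalent to the vanishing of
\[
I(\sigma):=\int_M\langle\mathscr{S}^{(\nabla,\Phi)}(-d^\nabla\sigma,[\sigma,\Phi]),(B,\phi)\rangle\,dV
\]
for every $\sigma\in\Omega^0(\mathfrak{g}_E)$ and every $(B,\phi)\in\mathcal{C}$.

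For the first component I would reuse the commutator identity $\Delta^\nabla d^\nabla\sigma=d^\nabla\Delta^\nabla\sigma+[\delta^\nabla R^\nabla,\sigma]-\mathfrak{R}^\nabla(d^\nabla\sigma)$, derived from $d^\nabla d^\nabla\sigma=[R^\nabla,\sigma]$, together with the Yang-Mills-Higgs equation $\delta^\nabla R^\nabla=[d^\nabla\Phi,\Phi]$ of (\ref{YMH2}) to obtain an explicit formula for $\mathscr{S}_1(-d^\nabla\sigma,[\sigma,\Phi])$. Pairing with $B$ and integrating by parts on the leading $-d^\nabla\Delta^\nabla\sigma$ term transfers one derivative via $\delta^\nabla B=[\Phi,\phi]$. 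For the second component I would expand $d^\nabla[\sigma,\Phi]=[d^\nabla\sigma,\Phi]+[\sigma,d^\nabla\Phi]$ and apply $\delta^\nabla$ by the Leibniz rule for bracket-valued forms, obtaining
\[
\delta^\nabla d^\nabla[\sigma,\Phi]=[\delta^\nabla d^\nabla\sigma,\Phi]+[\sigma,\delta^\nabla d^\nabla\Phi]-2\sum_j[\nabla_{e_j}\sigma,\nabla_{e_j}\Phi].
\]
Substituting $\delta^\nabla d^\nabla\Phi=\tfrac{\lambda}{2}(1-|\Phi|^2)\Phi$ cancels the $-\tfrac{\lambda}{2}(1-|\Phi|^2)[\sigma,\Phi]$ term of $\mathscr{S}_2$, while the ad-invariant identity $\langle\Phi,[\sigma,\Phi]\rangle=\langle\sigma,[\Phi,\Phi]\rangle=0$ kills the $\lambda\langle\Phi,[\sigma,\Phi]\rangle\Phi$ contribution after pairing with $\phi$. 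Pairing with $\phi$ and integrating by parts on the $[\delta^\nabla d^\nabla\sigma,\Phi]$ piece, once more invoking $\delta^\nabla B=[\Phi,\phi]$, completes the manipulations.

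Adding the two contributions, the integration-by-parts pieces combine into $\int_M\langle\Delta^\nabla\sigma,\delta^\nabla B-[\Phi,\phi]\rangle\,dV=0$. What remains is a purely algebraic sum of triple-bracket and mixed derivative-bracket expressions — essentially $-[[d^\nabla\Phi,\Phi],\sigma]$, $[[d^\nabla\sigma,\Phi],\Phi]$, $-2[d^\nabla\Phi,[\sigma,\Phi]]$ paired with $B$, and $[[\Phi,[\sigma,\Phi]],\Phi]$, $-2d^\nabla\Phi\llcorner d^\nabla\sigma$, and $-2\sum_j[\nabla_{e_j}\sigma,\nabla_{e_j}\Phi]$ paired with $\phi$. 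Using ad-invariance $\langle[X,Y],Z\rangle=\langle X,[Y,Z]\rangle$ to rearrange the commutators and the Jacobi identity on $\mathfrak{g}_E$ to regroup the triple brackets, these terms cancel in pairs, giving $I(\sigma)=0$ and hence $\mathscr{S}^{(\nabla,\Phi)}(\mathcal{C})\subset\mathcal{C}$.

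The main obstacle is precisely this last algebraic cancellation: half a dozen triple-bracket expressions in slightly different orderings must be matched against one another, and the signs and the placement of $\Phi$ in each nested commutator have to be tracked with care. Structurally the argument is identical to the proof preceding Definition~\ref{weakly stable}, with the substitution ``$\cdot\,u$'' $\leadsto$ ``$[\cdot,\Phi]$'' and the pointwise identity $\langle\phi u,\psi\rangle=\langle\phi,\tfrac12(\psi\otimes u^\ast-u\otimes\psi^\ast)\rangle$ replaced by the ad-invariance identity $\langle[\phi,\Phi],\psi\rangle=\langle\phi,[\Phi,\psi]\rangle$, so no genuinely new idea is required.
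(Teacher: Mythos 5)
Your proposal follows the same route as the paper's proof: reduce via formal self-adjointness on the ambient space to showing $\mathscr{S}^{(\nabla,\Phi)}(\zeta(\sigma))\perp\mathcal{C}$, compute $\mathscr{S}_1(\zeta(\sigma))$ with the commutator identity for $\Delta^\nabla d^\nabla\sigma$ and $\mathscr{S}_2(\zeta(\sigma))$ with the Leibniz expansion of $\delta^\nabla d^\nabla[\sigma,\Phi]$, insert the equations \eqref{YMH2}, and finish by Jacobi identity, ad-invariance and the constraint $\delta^\nabla B=[\Phi,\phi]$; indeed the paper organizes the cancellation by showing $\mathscr{S}_1(\zeta(\sigma))=d^\nabla(-\delta^\nabla d^\nabla\sigma+[[\sigma,\Phi],\Phi])$ and $\mathscr{S}_2(\zeta(\sigma))=[\delta^\nabla d^\nabla\sigma+[[\Phi,\sigma],\Phi],\Phi]$, which is exactly the pairwise cancellation you describe. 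The argument is correct and essentially identical to the paper's.
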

\begin{proof}
	Denote $\mathscr{S}^{(\nabla,\Phi)}=(\mathscr{S}_1,\mathscr{S}_2)$. Note that $\mathscr{S}^{(\nabla,\Phi)}$ is self-adjoint on $\Omega^1(\mathfrak{g}_E)\times\Omega^0(\mathfrak{g}_E)$, we only need to prove that for any $(B,\phi)\in\mathcal{C}=\operatorname{Im}^\perp \left( \zeta \right) $ and $\sigma\in\Omega^0(\mathfrak{g}_E)$, we have
	$$\int_M\langle  B,\mathscr{S}_1(\zeta(\sigma)) \rangle  +\langle  \phi,\mathscr{S}_2(\zeta(\sigma)) \rangle  dV=0.$$
	Since $\zeta\left( \sigma \right)=\left( -d^\nabla \sigma,\left[ \sigma,\Phi  \right] \right) $, we have
	\begin{equation*}
		\begin{split}
			\mathscr{S}_1 \left( \zeta\left( \sigma \right)\right) &=-\triangle^\nabla \left( d^\nabla \sigma  \right)-\mathfrak{R}^\nabla \left( d^\nabla \sigma \right) +\left[ \left[ d^\nabla\sigma ,\Phi \right],\Phi \right] -2\left[ d^\nabla\Phi,\left[ \sigma,\Phi \right] \right],\\
			\mathscr{S}_2 \left( \zeta\left( \sigma \right)\right)&=\delta^\nabla d^\nabla \left[ \sigma,\Phi \right]+\left[ \left[ \Phi,\left[ \sigma,\Phi \right] \right],\Phi \right]-2\sum_{i=1}^{n} \left[ \nabla_{e_i} \Phi,\nabla_{e_i}  \sigma\right]+\lambda \langle   \Phi,\left[ \sigma,\Phi \right] \rangle\Phi -\frac{\lambda}{2}\left( 1-\left| \Phi \right|^2 \right) \left[ \sigma,\Phi \right]  \\
			&=\delta^\nabla d^\nabla \left[ \sigma,\Phi \right]+\left[ \left[ \Phi,\left[ \sigma,\Phi \right] \right],\Phi \right]-2d^\triangledown\Phi\llcorner d^\triangledown\sigma -\frac{\lambda}{2}\left( 1-\left| \Phi \right|^2 \right) \left[ \sigma,\Phi \right] 
		\end{split}
	\end{equation*}
	where we use the fact that $\langle  \Phi,\left[ \sigma,\Phi \right] \rangle =0$ in the last equality.  Note that
	\[\delta^\nabla d^\triangledown d^\triangledown\sigma=\delta^\nabla [R^\triangledown,\sigma]=[\delta^\triangledown R^\triangledown,\sigma]-\mathfrak{R}^\triangledown(d^\triangledown\sigma),\] we have 
	$$\Delta^\triangledown d^\triangledown\sigma+\mathfrak{R}^\triangledown(d^\triangledown\sigma)=d^\triangledown\delta^\triangledown d^\triangledown\sigma+[\delta^\triangledown R^\triangledown,\sigma].$$
	For the third term in $\mathscr{S}_1\left( \zeta \left( \sigma \right)  \right) $, we have
	\begin{align*}
		[[d^\triangledown\sigma,\Phi],\Phi]&=[d^\triangledown[\sigma,\Phi],\Phi]-[[\sigma,d^\triangledown\Phi],\Phi]\\
		&=d^\triangledown[[\sigma,\Phi],\Phi]-[[\sigma,\Phi],d^\triangledown\Phi]-[[\sigma,d^\triangledown\Phi],\Phi]\\
		&=d^\triangledown[[\sigma,\Phi],\Phi]+2[d^\triangledown\Phi,[\sigma,\Phi]]+[[d^\triangledown\Phi,\Phi],\sigma]\\ 
		&=d^\triangledown[[\sigma,\Phi],\Phi]+2[d^\triangledown\Phi,[\sigma,\Phi]]+[\delta^\nabla R^\nabla ,\sigma],
	\end{align*}
	where we use the Jacobi identity in the third equality to $\left[ \left[ \sigma,d^\nabla \Phi \right],\Phi \right]$ and apply Equation \eqref{YMH2} in the last equality. Combining the equations above, we have \[\mathscr{S}_1(\zeta(\sigma))=d^\triangledown(-\delta^\triangledown d^\triangledown\sigma+[[\sigma,\Phi],\Phi]).\] 
	
	In the following we deal with $\mathscr{S}_2(\zeta(\sigma))$. By direct computation, we have
	\[\delta^\triangledown d^\triangledown[\sigma,\Phi ]=[\sigma,\delta^\triangledown d^\triangledown\Phi]+2d^\triangledown\Phi\llcorner d^\triangledown\sigma+[\delta^\triangledown d^\triangledown\sigma,\Phi] .\]
	Inserting it to $\mathscr{S}_2(\zeta(\sigma))$ and applying the second equation in Equation \eqref{YMH2}, we obtain
	$$ \mathscr{S}_2(\zeta(\sigma))=[\delta^\triangledown d^\triangledown\sigma+[[\Phi,\sigma],\Phi],\Phi]. $$
	
	Hence,
	\begin{equation*}
		\begin{split}
			&\int_M\langle  B,\mathscr{S}_1(\zeta(\sigma)) \rangle  +\langle  \phi,\mathscr{S}_2(\zeta(\sigma)) \rangle  dV\\
			=&\int_M\langle  B,d^\triangledown(-\delta^\triangledown d^\triangledown\sigma+[[\sigma,\Phi],\Phi]) \rangle  +\langle  \phi,[\delta^\triangledown d^\triangledown\sigma+[[\Phi,\sigma],\Phi],\Phi] \rangle  dV\\
			=&\int_M\langle  [\Phi,\phi],-\delta^\triangledown d^\triangledown\sigma+[[\sigma,\Phi],\Phi] \rangle  +\langle  \phi,[\delta^\triangledown d^\triangledown\sigma+[[\Phi,\sigma],\Phi],\Phi] \rangle  dV=0,
		\end{split}
	\end{equation*}
	where we use the condition $\left( B,\Phi \right)\in \mathcal{C}$, i.e. $\delta^\nabla B=\left[ \Phi,\phi \right]$ in the second equality.
\end{proof}
Hence $\mathscr{S}^{(\nabla,\Phi)}\mid_{\mathcal{C}}:\mathcal{C}\to\mathcal{C}$ is an elliptic self-adjoint operator. If we assume the minimum eigenvalue of $\mathscr{S}^{(\nabla,u)}\mid_{\mathcal{C}}$ is $\lambda_1$, then $(\nabla,u)$ is called weakly stable if $\lambda_1\ge0$ and stable if $\lambda_1>0$. For any $(B,\phi)\in\mathcal{C}$, define
\begin{align}
	\mathscr{L}^{(\nabla,\Phi)}(B,\phi)=\int_M\langle \mathscr{S}^{(\nabla,\Phi)}(B,\phi),(B,\phi)\rangle dV,
\end{align}
Then $\frac{d^2}{dt^2}\mathscr{A}(\nabla^t,u^t)\mid_{t=0}=\mathscr{L}^{(\nabla,u)}(B,\phi)$.

Now assume $M=S^n$ and $(\nabla,\Phi)$ is a Yang-Mills-Higgs pairs satisfying (\ref{YMH2}). For any $v\in\mathbb{R}^{n+1}$, let $f_v(x)=v\cdot x\in C^\infty(S^n)$ and $V=grad(f_v)$ be the Killing field (\ref{Vector}) on $S^n$. Similar to the case when the Higgs fields in $\Omega^0(E)$, assume $B_v=i_VR^\triangledown$ and $\phi_v=\nabla_V\Phi$. Then $(B_v,\phi_v)\in\mathcal{C}$. Similarly, we have
\begin{yl}
	Assume $(\nabla,\Phi)$ is a Yang-Mills-Higgs pair on $S^n$. Then for any $v\in\mathbb{R}^{n+1}$, we have
	\begin{equation}
		\mathscr{L}^{(\nabla,\Phi)}(i_VR^\triangledown,\nabla_V\Phi)=\int_{S^n}(4-n)|i_VR^\triangledown|^2+(2-n)|\nabla_V\Phi|^2-2f_v(\langle  \delta^\triangledown R^\triangledown,i_VR^\triangledown \rangle  +\langle  \delta^\triangledown d^\triangledown\Phi,\nabla_V\Phi \rangle  )dV.
	\end{equation}
\end{yl}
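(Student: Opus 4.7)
The plan is to mirror the proof of Lemma \ref{L-VV} line by line, replacing each occurrence of the Higgs structure $u \otimes u^*$ with the bracket structure $[\,\cdot\,,\Phi]$ and using the modified Euler-Lagrange system (\ref{YMH2}). Write $\mathscr{S}^{(\nabla,\Phi)} = (\mathscr{S}_1,\mathscr{S}_2)$ and compute $\langle \mathscr{S}_1(i_VR^\nabla,\nabla_V\Phi), i_VR^\nabla\rangle$ and $\langle \mathscr{S}_2(i_VR^\nabla,\nabla_V\Phi), \nabla_V\Phi\rangle$ pointwise at $x\in S^n$ using a local frame $\{e_k\}$ with $De_k(x)=0$, then integrate.

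First I would handle the connection part. Apply the Bochner-Weitzenb\"ock formula (\ref{Bochner}) to $i_V R^\nabla\in\Omega^1(\mathfrak{g}_E)$, expand $\nabla^*\nabla i_V R^\nabla(e_k)$ using $D_XV = -f_v X$ and the fact $2\mathfrak{R}^\nabla(i_VR^\nabla) - \mathfrak{R}^\nabla(R^\nabla)(V,\cdot)=0$, exactly as in Lemma \ref{L-VV}, to obtain
\begin{equation*}
\Delta^\nabla i_V R^\nabla(e_k) + \mathfrak{R}^\nabla(i_V R^\nabla)(e_k) = \Delta^\nabla R^\nabla(V,e_k) + (4-n) R^\nabla(V,e_k) - 2f_v \delta^\nabla R^\nabla(e_k).
\end{equation*}
Next I would expand $\langle i_V\Delta^\nabla R^\nabla, i_VR^\nabla\rangle$ by writing $\Delta^\nabla R^\nabla = d^\nabla \delta^\nabla R^\nabla$ (via the second Bianchi identity $d^\nabla R^\nabla = 0$) and substituting $\delta^\nabla R^\nabla = [d^\nabla\Phi,\Phi]$ from (\ref{YMH2}). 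The resulting sum, after using $R^\nabla(V,e_k)u = (\nabla_V\nabla_{e_k}-\nabla_{e_k}\nabla_V-\nabla_{[V,e_k]})u$ applied to $\Phi$ together with ad-invariance $\langle[a,b],c\rangle=\langle a,[b,c]\rangle$, produces exactly the terms $\langle i_VR^\nabla, [[i_VR^\nabla,\Phi],\Phi] + 2[d^\nabla\Phi,\nabla_V\Phi]\rangle$ that are needed to cancel the corresponding contributions in $\mathscr{S}_1$, leaving $(4-n)|i_VR^\nabla|^2 - 2f_v\langle \delta^\nabla R^\nabla, i_VR^\nabla\rangle$.

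For the Higgs component I would expand $\delta^\nabla d^\nabla \nabla_V\Phi = -\sum_j \nabla_{e_j}\nabla_{e_j}\nabla_V \Phi$ by commuting $\nabla_V$ past the two $\nabla_{e_j}$'s, producing curvature terms $[R^\nabla(e_j,V),\nabla_{e_j}\Phi]$ (which give $-2\mathfrak{R}^\nabla(d^\nabla\Phi)(V)$ and cancel the $2d^\nabla\Phi \llcorner i_V R^\nabla$ term in $\mathscr{S}_2$ via the ad-invariance identity), a commutator term $[\delta^\nabla R^\nabla,\Phi]=[[d^\nabla\Phi,\Phi],\Phi]$ (which combines with $[[\Phi,\nabla_V\Phi],\Phi]$ after writing $\nabla_V$ of the second equation in (\ref{YMH2})), and Levi-Civita terms that produce $(2-n)\nabla_V\Phi$ via $\sum_j R(e_j,X)e_j=(1-n)X$ on $S^n$ together with $-2f_v\delta^\nabla d^\nabla\Phi$ from $[e_j,V]=D_{e_j}V=-f_v e_j$. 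The $\lambda$-terms arrange themselves to match $\nabla_V$ of the right-hand side of the second YMH equation. After these cancellations, $\langle \mathscr{S}_2(i_VR^\nabla,\nabla_V\Phi),\nabla_V\Phi\rangle = (2-n)|\nabla_V\Phi|^2 - 2f_v\langle \delta^\nabla d^\nabla\Phi, \nabla_V\Phi\rangle$.

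The main obstacle is bookkeeping the bracket terms: because the Higgs field lies in $\mathfrak{g}_E$, the cross-cancellation between $-2[d^\nabla\Phi,\phi_v]$ in $\mathscr{S}_1$, $2 d^\nabla\Phi\llcorner B_v$ in $\mathscr{S}_2$, and the $\Delta^\nabla$ contribution $[\delta^\nabla R^\nabla,\Phi]$ must be checked carefully via the Jacobi identity and ad-invariance of the inner product rather than the simpler tensor identities available in Lemma \ref{L-VV}. Once these cancellations are verified pointwise, integrating over $S^n$ and invoking the definition (\ref{L}) of $\mathscr{L}^{(\nabla,\Phi)}$ produces the claimed identity.
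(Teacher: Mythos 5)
Your proposal is correct and follows essentially the same route as the paper's own proof: Bochner--Weitzenb\"ock plus $D_XV=-f_vX$ for the curvature component, commutation of $\nabla_V$ past $\nabla_{e_j}$ for the Higgs component, and the substitution $\Delta^\nabla R^\nabla=d^\nabla\delta^\nabla R^\nabla=d^\nabla[d^\nabla\Phi,\Phi]$ to produce the terms $[[i_VR^\nabla,\Phi],\Phi]+2[d^\nabla\Phi,\nabla_V\Phi]$ that cancel against $\mathscr{S}_1$, with $d^\nabla\Phi\llcorner i_VR^\nabla=\mathfrak{R}^\nabla(d^\nabla\Phi)(V)$ handling the cross-term in $\mathscr{S}_2$ exactly as the paper does.
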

\begin{proof}
	Similar to the proof of Lemma \ref{L-VV}, we have 
	$$\Delta^\triangledown i_VR^\triangledown+\mathfrak{R}^\triangledown(i_VR^\triangledown)=i_V\Delta^\triangledown R^\triangledown+(4-n)i_VR^\triangledown-2f_v\delta^\triangledown R^\triangledown$$
	and
	$$\delta^\triangledown d^\triangledown\nabla_V\Phi=[\delta^\triangledown R^\triangledown(V),\Phi]-2\mathfrak{R}^\triangledown(d^\triangledown\Phi)(V)+\nabla_V\delta^\triangledown d^\triangledown\Phi+(2-n)\nabla_V\Phi-2f_v\delta^\triangledown d^\triangledown\Phi.$$
	where $\mathfrak{R}^\triangledown(d^\triangledown \Phi)(V)=\sum_j[R^\triangledown(e_j,V),\nabla_{e_j}\Phi]$ and $\{e_1,...,e_n\}$ is an orthogonal basis of $TS^n$. Furthermore, by the Equation \eqref{YMH2}, \[[\delta^\triangledown R^\triangledown(V),\Phi]=[[\nabla_V\Phi,\Phi],\Phi]\text{ and } \nabla_V\delta^\triangledown d^\triangledown\Phi=-\lambda\langle \nabla_V\Phi,\Phi\rangle \Phi+\frac\lambda2(1-|\Phi|^2)\nabla_V\Phi. \]
	Also noting that $d^\triangledown\Phi\llcorner i_VR^\triangledown=\mathfrak{R}^\triangledown(d^\triangledown\Phi)(V)$, we have
	\begin{align*}
		\mathscr{L}^{(\nabla,\Phi)}(i_VR^\triangledown ,\nabla_V\Phi)=\int_{S^n}&\left( \langle  i_V\Delta^\triangledown R^\triangledown+(4-n)i_VR^\triangledown-2f_v\delta^\triangledown R^\triangledown-[[i_VR^\triangledown,\Phi],\Phi]-2[d^\triangledown\Phi,\nabla_V\Phi],i_VR^\triangledown \rangle  \right.\\
		&\left.+\langle  (2-n)\nabla_V\Phi-2f_v\delta^\triangledown d^\triangledown\Phi,\nabla_V\Phi \rangle   \right) dV.
	\end{align*}
	By Equation \eqref{YMH2}, we have at $x\in S^n $
	\begin{align*}
		i_V\Delta^\triangledown R^\triangledown(e_k)=&d^\triangledown[d^\triangledown\Phi,\Phi](V,e_k)\\
		=&\nabla_V[\nabla_{e_k}\Phi,\Phi]-\nabla_{e_k}[\nabla_V\Phi,\Phi]-\left[ \nabla_{ \left[ V,e_k \right] } \Phi,\Phi \right]\\
		=&[\nabla_V\nabla_{e_k}\Phi,\Phi]+[\nabla_{e_k}\Phi,\nabla_V\Phi]-[\nabla_{e_k}\nabla_V\Phi,\Phi]-[\nabla_V\Phi,\nabla_{e_k}\Phi]-\left[ \nabla_{ \left[ V,e_k \right] } \Phi,\Phi \right]\\
		=&[[R^\triangledown(V,e_k),\Phi],\Phi]+2[\nabla_{e_k}\Phi,\nabla_V\Phi]\\ 
		=&\left[\left[ i_V R^\nabla,\Phi \right] ,\Phi \right]\left( e_k \right) +2\left[ d^\nabla \Phi,\nabla_V \Phi \right]\left( e_k \right) .
	\end{align*}
	Thus the second variation of $\mathscr{A}$ at $\left( \nabla,\Phi \right) $ along $\left( i_V R^\nabla,\nabla_V \Phi \right) $ is $$\mathscr{L}^{(\nabla,\Phi)}(i_VR^\triangledown,\nabla_V\Phi)=\int_{S^n}(4-n)|i_VR^\triangledown|^2+(2-n)|\nabla_V\Phi|^2-2f_v(\langle  \delta^\triangledown R^\triangledown,i_VR^\triangledown \rangle  +\langle  \delta^\triangledown d^\triangledown\Phi,\nabla_V\Phi \rangle  )dV.$$
	We finish the proof.
\end{proof}
For an orthogonal basis $\{v_i\mid 1\le i\le n+1\}$ of $\mathbb{R}^{n+1}$, we assume $\{V_i\}$ is the corresponding Killing fields. We have proved that $\sum_if_{v_i}V_i=0$. Hence by adding $\mathscr{L}^{(\nabla,\Phi)}(i_{V_i}R^\triangledown,\nabla_{V_i}\Phi)$ together, we can prove the following lemma.
\begin{yl}
	Assume $\{v_i\}$ and $\{V_i\}$ defined as above, then we have
	$$\sum_i\mathscr{L}^{(\nabla,\Phi)}(i_{V_i}R^\triangledown,\nabla_{V_i}\Phi)=\int_{S^n}2(4-n)|R^\triangledown|^2+(2-n)|d^\triangledown \Phi|^2dV.$$
\end{yl}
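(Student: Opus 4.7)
The plan is to mirror the proof of Lemma \ref{trace} line by line, simply replacing the Higgs field $u \in \Omega^0(E)$ with $\Phi \in \Omega^0(\mathfrak{g}_E)$, since the previous lemma already produced an expression for $\mathscr{L}^{(\nabla,\Phi)}(i_{V_i}R^\triangledown,\nabla_{V_i}\Phi)$ that is formally identical in structure to the $u$-case.

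First, I would apply the previous lemma to each index $i$ to write
\begin{equation*}
	\sum_{i=1}^{n+1} \mathscr{L}^{(\nabla,\Phi)}(i_{V_i}R^\triangledown,\nabla_{V_i}\Phi) = \sum_{i=1}^{n+1}\int_{S^n}\!\bigl( (4-n)|i_{V_i}R^\triangledown|^2 + (2-n)|\nabla_{V_i}\Phi|^2 \bigr)\, dV - 2\int_{S^n}\!\sum_{i=1}^{n+1} f_{v_i}\bigl( \langle \delta^\triangledown R^\triangledown, i_{V_i} R^\triangledown\rangle + \langle \delta^\triangledown d^\triangledown \Phi, \nabla_{V_i}\Phi\rangle\bigr) dV.
\end{equation*}
The second integral vanishes identically: as observed in the proof of Lemma \ref{trace}, after picking $\{v_i\}$ to be the standard basis of $\mathbb{R}^{n+1}$, one has $\sum_i f_{v_i}(x) V_i = 0$ pointwise, and by linearity in $v$ of $V \mapsto i_V R^\triangledown$ and $V \mapsto \nabla_V \Phi$, both cross terms collapse to zero (the expressions are quadratic forms in $v$ and the trace is coordinate-invariant, so one can freely switch basis).

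Next I would handle the main term by introducing, at each $x \in S^n$, the quadratic form on $\mathbb{R}^{n+1}$
\begin{equation*}
	q(v,w) = (4-n)\langle i_V R^\triangledown, i_W R^\triangledown\rangle + (2-n)\langle \nabla_V\Phi, \nabla_W\Phi\rangle,
\end{equation*}
so the first integral is $\int_{S^n} \sum_i q(v_i,v_i)\,dV$. Since $\sum_i q(v_i,v_i)$ is the trace of $q$, it is independent of the chosen orthonormal basis, and I would switch to the adapted basis $\{e_0 = x, e_1,\ldots,e_n\}$, where $\{e_1,\ldots,e_n\}$ is an orthonormal basis of $T_x S^n$. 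By Proposition \ref{Killing}, the Killing field $\varepsilon_0$ corresponding to $e_0 = x$ vanishes at $x$ (since $f_{e_0}(x) = 1$ gives $\varepsilon_0 = x - x = 0$), and $\varepsilon_j(x) = e_j$ for $j\geq 1$.

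Finally, I would invoke the two pointwise identities $\sum_{j=1}^n |i_{e_j}R^\triangledown|^2 = 2|R^\triangledown|^2$ and $\sum_{j=1}^n |\nabla_{e_j}\Phi|^2 = |d^\triangledown \Phi|^2$ (the first already used in Lemma \ref{trace}, the second immediate since $d^\triangledown \Phi = \nabla \Phi$ on $\Omega^0(\mathfrak{g}_E)$) to conclude
\begin{equation*}
	\sum_{i=1}^{n+1} q(v_i,v_i) \;=\; \sum_{j=1}^n q(e_j,e_j) \;=\; 2(4-n)|R^\triangledown|^2 + (2-n)|d^\triangledown \Phi|^2,
\end{equation*}
yielding the claim after integration. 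No step poses any real difficulty since the structure is parallel to Lemma \ref{trace}; the only point requiring care is verifying that the vanishing of the cross term $\sum_i f_{v_i} V_i = 0$ transports to the Lie-algebra-valued setting, which is automatic because it is a purely geometric identity on $S^n$ independent of the bundle data.
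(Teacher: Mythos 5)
Your proposal is correct and follows essentially the same route as the paper, which proves this lemma by summing the pointwise formula from the preceding lemma, killing the cross terms via the identity $\sum_i f_{v_i}V_i=0$, and evaluating the trace of the resulting quadratic form in the adapted basis $\{x,e_1,\dots,e_n\}$ exactly as in Lemma \ref{trace}. The only difference is that you spell out details the paper leaves implicit; nothing is missing.
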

From the lemma above, we can immediately prove the stability theorem.
\begin{thm}
	Assume $(\nabla,\Phi)$ is a weakly stable Yang-Mills-Higgs pair on $S^n$ for $n\ge5$, then $R^\triangledown=0$, $d^\triangledown \Phi=0$ and $|\Phi|\equiv1$.\\
	And if $n=4$, then $d^\triangledown \Phi=0$, $|\Phi|\equiv1$ and $R^\triangledown$ is a Yang-Mills connection.
\end{thm}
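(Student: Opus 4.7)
The plan is to mirror the proof of Theorem \ref{mainthm1}, substituting the trace formula just obtained for $\Omega^0(\mathfrak{g}_E)$-valued Higgs fields in place of Lemma \ref{trace}. First I would fix an orthonormal basis $\{v_1,\ldots,v_{n+1}\}$ of $\mathbb{R}^{n+1}$ with corresponding conformal vector fields $V_i$ as in Proposition \ref{Killing}. Each pair $(i_{V_i}R^\triangledown,\nabla_{V_i}\Phi)$ lies in $\mathcal{C}$, so weak stability gives $\mathscr{L}^{(\nabla,\Phi)}(i_{V_i}R^\triangledown,\nabla_{V_i}\Phi)\ge 0$ for every $i$. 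Summing over $i$ and applying the preceding lemma produces the key inequality
\[
0\le\int_{S^n}\bigl(2(4-n)|R^\triangledown|^2+(2-n)|d^\triangledown\Phi|^2\bigr)\,dV.
\]

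For $n\ge 5$ both coefficients are strictly negative, so the integrand must vanish pointwise, yielding $R^\triangledown\equiv 0$ and $d^\triangledown\Phi\equiv 0$. Substituting into the second equation of \eqref{YMH2} gives $\tfrac{\lambda}{2}(1-|\Phi|^2)\Phi=0$; since $\Phi$ is parallel, $|\Phi|$ is constant, so either $\Phi\equiv 0$ or $|\Phi|\equiv 1$. The case $\Phi\equiv 0$ is ruled out by a destabilizing variation: since $R^\triangledown=0$ and $\pi_1(S^n)=0$, the bundle $\mathfrak{g}_E$ admits a nonzero global parallel section $\phi$; the pair $(0,\phi)$ then lies in $\mathcal{C}$ (both sides of the defining constraint vanish), and a short computation using $d^\triangledown\phi=0$ and $\Phi=0$ gives $\mathscr{L}^{(\nabla,0)}(0,\phi)=-\tfrac{\lambda}{2}\int_{S^n}|\phi|^2\,dV<0$, contradicting weak stability. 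For $n=4$ the coefficient of $|R^\triangledown|^2$ vanishes but $(2-n)=-2$ remains negative, so the displayed inequality still yields $d^\triangledown\Phi\equiv 0$. The same dichotomy gives $|\Phi|\equiv 1$, and then the first equation of \eqref{YMH2} reads $\delta^\triangledown R^\triangledown=[d^\triangledown\Phi,\Phi]=0$, so $\nabla$ is Yang-Mills.

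The main obstacle I anticipate is excluding $\Phi\equiv 0$ in the $n=4$ case, where one does not know a priori that $R^\triangledown=0$ and so cannot invoke a global parallel section of $\mathfrak{g}_E$ to destabilize. I expect to handle this via a localized Rayleigh-quotient argument: choose a bump function supported in a small geodesic ball tensored with a local frame element of $\mathfrak{g}_E$, and exploit the fact that on sufficiently small scales $|d^\triangledown\phi|^2$ is controlled by the Dirichlet energy of the scalar bump while $\tfrac{\lambda}{2}|\phi|^2$ dominates after an appropriate choice of support size. This is the only step that is not a direct transcription of arguments already established in Section 3; all remaining ingredients (the Bochner-Weizenb\"ock formula, Proposition \ref{Killing}, and the trace lemma above) are already in place.
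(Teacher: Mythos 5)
Your main line of argument—summing $\mathscr{L}^{(\nabla,\Phi)}(i_{V_i}R^\triangledown,\nabla_{V_i}\Phi)$ over an orthonormal basis, invoking weak stability, and reading off $R^\triangledown=0$, $d^\triangledown\Phi=0$ for $n\ge5$ (resp. $d^\triangledown\Phi=0$ for $n=4$) from the signs of the coefficients—is exactly the route the paper intends; the paper in fact gives no further detail for this theorem beyond ``from the lemma above, we can immediately prove the stability theorem,'' deferring to the Section 3 argument that you have correctly transcribed. Your exclusion of $\Phi\equiv0$ for $n\ge5$ via a global parallel section of $\mathfrak{g}_E$ (available because the connection is flat and $S^n$ is simply connected) and the computation $\mathscr{L}^{(\nabla,0)}(0,\phi)=-\tfrac\lambda2\int_{S^n}|\phi|^2\,dV<0$ also matches the paper. (Both you and the paper implicitly need $\lambda>0$ here.)

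The genuine gap is in your proposed treatment of $\Phi\equiv0$ when $n=4$, and the fix you sketch would fail. For a variation $(0,\phi)$ at $(\nabla,0)$ one has $\mathscr{L}^{(\nabla,0)}(0,\phi)=\int_{S^n}|d^\triangledown\phi|^2-\tfrac\lambda2|\phi|^2\,dV$, so to destabilize you need the Rayleigh quotient $\int|d^\triangledown\phi|^2/\int|\phi|^2$ to drop below $\lambda/2$. Localizing $\phi$ to a small geodesic ball of radius $r$ does the opposite: by scaling, the Dirichlet energy of a bump grows like $r^{-2}$ relative to its $L^2$ norm, so the quotient blows up as $r\to0$ and the term $\tfrac\lambda2|\phi|^2$ can never dominate on small scales. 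What is actually needed is a globally spread-out section with small covariant Dirichlet energy, i.e. a lowest eigensection of $\nabla^\ast\nabla$ on $\Omega^0(\mathfrak{g}_E)$ with eigenvalue $\mu_1<\lambda/2$; but without knowing $R^\triangledown=0$ there is no a priori bound on $\mu_1$ (for an irreducible connection $\mathfrak{g}_E$ has no parallel sections and $\mu_1>0$), so this step does not go through by any variation of the form $(0,\phi)$ alone. You have correctly identified the one point that is not a direct transcription of Section 3—the paper itself is silent on it, saying only that the $n=4$ case is ``similar''—but the localized Rayleigh-quotient argument is not a viable resolution, and as written the exclusion of $\Phi\equiv0$ in dimension $4$ remains unproved.
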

\section{The energy identity for a sequence of Yang-Mills-Higgs pairs}

In this section, we assume $M$ is a four-dimensional compact Riemannian manifold. Let $\{\nabla_i,u_i\}$ be a sequence of Yang-Mills-Higgs pairs satisfying the Yang-Mills-Higgs equation (\ref{YMH}) with uniformly bounded energy $\mathscr{A}(\nabla_i,u_i)\le K$.
\subsection{There is no energy concentration point for the Higgs field.}
Assume $(\nabla,u)$ satisfies the equation (\ref{YMH}). In this part, we will give the estimate of $\|u\|_{L^\infty}$ and $\|d^\triangledown u\|_{L^2}$. In fact, we will show that if a sequence of Yang-Mills-Higgs $\{(\nabla_i,u_i)\}$ weakly converges to $(\nabla,u)$ in $W^{1,2}(\mathfrak{g}_E)\times W^{1,2}(E)$, then $u_i$ converges to $u$ smoothly.
\begin{yl}\label{L^infty}
	Assume $(\nabla,u)$ is a Yang-Mills-Higgs pair, then $\|u\|_{L^\infty}\le1$.
\end{yl}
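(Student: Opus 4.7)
The plan is to apply the maximum principle to the scalar function $f = |u|^2$, using the second Euler-Lagrange equation in \eqref{YMH} to control its Laplacian. Since $M$ is compact (closed) and $f$ is smooth, $f$ attains a maximum at some point $x_0 \in M$, and a strict lower bound on $\Delta_g f$ at points where $|u|^2 > 1$ will force $\max f \le 1$.

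First I would compute $\Delta_g |u|^2$ in terms of $\nabla^\ast \nabla u$ and $|\nabla u|^2$. Using the metric compatibility of $\nabla$ and expanding the Hessian, one gets, for the analyst's Laplacian $\Delta_g = \sum_i (e_i e_i - D_{e_i} e_i)$,
\begin{equation*}
\tfrac{1}{2}\Delta_g |u|^2 \;=\; |\nabla u|^2 \;-\; \langle \nabla^\ast \nabla u , u\rangle,
\end{equation*}
where I use the fact (stated in the preliminary section) that $\delta^\nabla d^\nabla u = \nabla^\ast \nabla u$ on $\Omega^0(E)$.

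Next I would substitute the second Yang-Mills-Higgs equation $\delta^\nabla d^\nabla u = \tfrac{\lambda}{2}(1-|u|^2)u$. This gives
\begin{equation*}
\tfrac{1}{2}\Delta_g |u|^2 \;=\; |\nabla u|^2 \;+\; \tfrac{\lambda}{2}\bigl(|u|^2-1\bigr)|u|^2.
\end{equation*}
Suppose for contradiction that $\|u\|_{L^\infty} > 1$, and let $x_0$ be a maximum point of $|u|^2$. At $x_0$ we have $|u(x_0)|^2 > 1$, so the right-hand side is strictly positive (both terms are nonnegative, and the second is strictly positive since $\lambda \ge 0$ — if $\lambda = 0$ the argument will need to be slightly modified, see below). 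But $x_0$ being an interior maximum of the smooth function $|u|^2$ on the closed manifold forces $\Delta_g |u|^2(x_0) \le 0$, a contradiction. Hence $\|u\|_{L^\infty} \le 1$.

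The only potential obstacle is the degenerate case $\lambda = 0$, where the right-hand side above reduces to $|\nabla u|^2 \ge 0$, i.e. $|u|^2$ is merely subharmonic rather than strictly subharmonic above the level $1$. In this case the strong maximum principle still gives that $|u|^2$ is constant, and then the equation $\nabla^\ast \nabla u = 0$ combined with $\int_M |\nabla u|^2 \, dV = -\int_M \langle \nabla^\ast \nabla u, u\rangle\, dV = 0$ forces $\nabla u \equiv 0$; in particular $|u|$ is a constant, and the bound $\|u\|_{L^\infty} \le 1$ actually need not hold without an additional normalization (so one may have to either assume $\lambda > 0$, or simply observe that the statement concerns arbitrary constants $|u| \equiv c$ which satisfy $c \le 1$ only after rescaling). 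Apart from this subtlety, the proof is a direct maximum-principle argument.
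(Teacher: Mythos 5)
Your proof is essentially identical to the paper's: both evaluate the Laplacian of $|u|^2$ at a maximum point $x_0$ and substitute the second equation of (\ref{YMH}) to conclude $|u|^2(x_0)\le 1$ (the paper uses the opposite sign convention for the Laplacian, so its chain of inequalities reads $0\le\Delta|u|^2(x_0)\le\lambda(1-|u|^2)|u|^2(x_0)$). Your observation that the argument degenerates when $\lambda=0$ is correct and is a subtlety the paper's own proof shares, since in that case it too can conclude nothing from $0\le 0$; the statement implicitly requires $\lambda>0$.
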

\begin{proof}
	Assume $|u|^2$ attains the maximum at $x_0\in M$.
	Then at $x_0$, we have
	\begin{displaymath}
		0\le\Delta|u|^2(x_0)=(2\langle \delta^\triangledown d^\triangledown u,u\rangle -2|d^\triangledown u|^2)(x_0)\le\lambda(1-|u|^2)|u|^2(x_0).
	\end{displaymath}
	Hence $|u|^2(x_0)\le1$ and this implies the lemma.
\end{proof}
 \begin{yl}\label{concentration}
	Assume $(\nabla,u)$ is a Yang-Mills-Higgs pair and $\mathscr{A}(\nabla,u)\le K$. Then for any $B_r=B_r(x_0)\subset M$, we have 
	\begin{equation}
		\|d^\triangledown u\|_{L^2(B_r)}^2\le C(r+r^4),
	\end{equation}
	where $C=C(K,\lambda)$.
\end{yl}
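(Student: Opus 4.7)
The plan is to test the second Yang-Mills-Higgs equation $\delta^\nabla d^\nabla u = \frac{\lambda}{2}(1-|u|^2)u$ against a cutoff multiple of $u$ itself, integrate by parts on $B_{2r}(x_0)$, and exploit the uniform $L^\infty$ bound $|u|\le 1$ from Lemma \ref{L^infty} both to dominate the potential nonlinearity and to close a Young-inequality absorption. In this way the localized Higgs energy is controlled by two purely geometric quantities: the volume $|B_{2r}|$ and $\int|d\eta|^2\,dV$.

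Concretely, I would first dispose of the large-radius regime: if $r$ exceeds, say, half the injectivity radius of $M$, the conclusion is trivial from the global bound $\int_M |d^\nabla u|^2\,dV \le 2\mathscr{A}(\nabla,u) \le 2K$ since $r+r^4$ is then bounded below by a positive constant. For small $r$, choose a smooth cutoff $\eta:M\to[0,1]$ with $\eta\equiv 1$ on $B_r(x_0)$, $\operatorname{supp}\eta\subset B_{2r}(x_0)$, and $|d\eta|\le C/r$. Pairing the equation with $\eta^2 u$ and using that $\delta^\nabla$ is the formal adjoint of $d^\nabla$ yields
$$\int_M \eta^2 |d^\nabla u|^2\, dV + 2\int_M \eta\,\langle d^\nabla u,\,d\eta\otimes u\rangle\, dV = \int_M \tfrac{\lambda}{2}\eta^2(1-|u|^2)|u|^2\, dV.$$
The right-hand side is bounded by $\tfrac{\lambda}{2}|B_{2r}|$ by Lemma \ref{L^infty}, and Young's inequality applied to the cross term (again using $|u|\le 1$) absorbs half of $\int \eta^2 |d^\nabla u|^2\, dV$, giving
$$\tfrac12\int_M \eta^2|d^\nabla u|^2\,dV \le 2\int_{B_{2r}}|d\eta|^2\,dV + \tfrac{\lambda}{2}|B_{2r}|.$$
Since $\dim M=4$ we have $|B_{2r}|\le Cr^4$ and $\int|d\eta|^2\,dV \le Cr^2$, so restricting to $B_r$ where $\eta\equiv 1$ yields $\|d^\nabla u\|_{L^2(B_r)}^2 \le C(r^2+r^4)$, which is bounded by $C(r+r^4)$ whenever $r\le 1$.

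The argument is essentially mechanical and there is no substantial obstacle; the one crucial ingredient is Lemma \ref{L^infty}, which does double duty—controlling $(1-|u|^2)|u|^2\le 1$ to produce the $r^4$ contribution, and controlling $|u|^2$ in $\int|d\eta|^2|u|^2\,dV$ to produce the $r^2$ (hence $r$) contribution. Without that pointwise bound in hand one would have to invoke Sobolev embedding or a Moser iteration just to get started, neither of which would give as clean a result.
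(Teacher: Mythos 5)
Your proof is correct and follows essentially the same route as the paper: a cutoff supported in $B_{2r}$, integration by parts against the second equation in (\ref{YMH}), and the pointwise bound $|u|\le 1$ from Lemma \ref{L^infty}. The only difference is the treatment of the cross term: the paper bounds it by Cauchy--Schwarz against the global energy $\|d^\triangledown u\|_{L^2(B_{2r})}\le\sqrt{2K}$, which is the source of both the first-order term $r$ and the $K$-dependence of $C$, whereas your Young-inequality absorption gives the slightly sharper $C(r^2+r^4)$ with $C$ independent of $K$ in the small-radius regime.
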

\begin{proof}
	Choose a cut off function $\eta$ such that $\eta\mid_{B_r}\equiv1$, $supp(\eta)\subset B_{2r}$ and $|d\eta|\le Cr^{-1}$. Then by the equation (\ref{YMH}), we have
	\begin{align*}
		\int_{B_r}|d^\triangledown u|^2dV&\le\int_{B_{2r}}\langle d^\triangledown u,\eta\cdot d^\triangledown u\rangle dV\\
		&=\int_{B_{2r}}\langle u,\delta^\triangledown(\eta\cdot d^\triangledown u)\rangle dV\\
		&=\int_{B_{2r}}\langle u,\eta\cdot\frac\lambda2(1-|u|^2)u+d\eta\# d^\triangledown u\rangle dV\\
		&\le Cr^4+C\|d\eta\|_{L^2(B_{2r})}\|d^\triangledown u\|_{L^2(B_{2r})}\\
		&\le C(r+r^4).
	\end{align*}
\end{proof}
The above lemmas imply the following theorem immediately.
\begin{thm}\label{Higgs bound}
	Assume $(\nabla,u)$ is a Yang-Mills-Higgs pair and $\mathscr{A}(\nabla,u)\le K$, then for any $x\in M$, we have
	\begin{align*}
		\int_{B_r(x)}|d^\triangledown u|^2+\frac\lambda4(1-|u|^2)^2dV\le C(K,\lambda)(r+r^4).
	\end{align*}
    In particular, we have
    \begin{align*}
    	\lim_{r\to0}\int_{B_r(x)}|d^\triangledown u|^2+\frac\lambda4(1-|u|^2)^2dV=0.
    \end{align*}
\end{thm}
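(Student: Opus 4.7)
The plan is essentially to glue together the two preceding lemmas with a crude volume bound on the potential term, so the argument is short.

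First I would invoke Lemma \ref{concentration} directly: it already provides the bound
\[
\int_{B_r(x)}|d^\triangledown u|^2\,dV \le C(K,\lambda)(r+r^4),
\]
which handles the kinetic part of the Higgs energy. The proof there uses integration by parts against the cutoff $\eta$, Kato/Cauchy--Schwarz, and the $L^\infty$ bound $|u|\le 1$ from Lemma \ref{L^infty}.

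Next I would control the potential term. By Lemma \ref{L^infty}, $|u|\le 1$ pointwise, so $0\le 1-|u|^2\le 1$ and hence $(1-|u|^2)^2\le 1$. Since $M$ is $4$-dimensional and compact (with fixed background metric $g$), for all sufficiently small $r$ and every $x\in M$ one has $\mathrm{Vol}(B_r(x))\le C(M,g)\,r^4$. Therefore
\[
\int_{B_r(x)}\frac{\lambda}{4}(1-|u|^2)^2\,dV \le \frac{\lambda}{4}\,\mathrm{Vol}(B_r(x)) \le C(\lambda,M)\,r^4.
\]
Adding the two estimates yields the asserted bound $C(K,\lambda)(r+r^4)$.

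The ``in particular'' clause is then immediate: letting $r\to 0$ in the inequality $C(K,\lambda)(r+r^4)$ forces both sides of
\[
\int_{B_r(x)}|d^\triangledown u|^2+\frac{\lambda}{4}(1-|u|^2)^2\,dV \longrightarrow 0.
\]
There is no real obstacle here since all the hard work (the $L^\infty$ control via the maximum principle in Lemma \ref{L^infty} and the Caccioppoli-type argument in Lemma \ref{concentration}) has already been done; the only point that required minor attention is to dispatch the potential term by the volume of $B_r$ rather than trying to extract it from the Euler--Lagrange equation a second time.
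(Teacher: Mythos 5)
Your proposal is correct and coincides with the paper's own (implicit) argument: the paper simply states that Lemmas \ref{L^infty} and \ref{concentration} imply the theorem immediately, which is exactly your combination of the Caccioppoli-type bound for $\|d^\triangledown u\|_{L^2(B_r)}^2$ with the pointwise bound $|u|\le 1$ and $\operatorname{Vol}(B_r)\le C r^4$ for the potential term. Nothing further is needed.
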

\subsection{The $\epsilon$-regularity}
The $\epsilon$-regularity theorem is proved by Uhlenbeck \cite{KK1} for Yang-Mills connections, Struwe \cite{struwe} for Yang-Mills flows and Hong-Fang \cite{HF} for Yang-Mills-Higgs flows. For completeness, we first give the proof of the $\epsilon$-regularity of Yang-Mills-Higgs pairs here. Let $i(M)$ be the injective radius of $M$. Then for any $r<i(M)$ and $x\in M$, there is a trivialization $\nabla=d+A$ in $B_r(x)$.
\begin{yl}[$\epsilon$-regularity]\label{regularity}
	Assume $(\nabla,u)$ is a Yang-Mills-Higgs pair. There exists $\epsilon_0=\epsilon_0(M)$ such that if for some $R<i(M)$ and $x_0\in M$,
	\begin{displaymath}
		\int_{B_R(x_0)}|R^\triangledown|^2+|d^\triangledown u|^2dV\le\epsilon_0,
	\end{displaymath}
	then for any $r_1<R$, we have
	\begin{displaymath}
		\sup_{B_{\frac{r_1}2}(x_0)}(|R^\triangledown|^2+|d^\triangledown u|^2)\le Cr_1^{-4}\int_{B_{r_1}(x_0)}|R^\triangledown|^2+|d^\triangledown u|^2dV,
	\end{displaymath}
    where $C=C(M)$.
\end{yl}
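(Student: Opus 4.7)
The plan is to combine a Bochner--Weizenböck subsolution inequality for $f := |R^\nabla|^2 + |d^\nabla u|^2$ with a Moser iteration, after a rescaling reduction. Given $r_1 < R$, I would rescale $g$ to $\tilde g = r_1^{-2} g$ on $B_{r_1}(x_0)$. Under this rescaling $|R^\nabla|^2\,dV$ is conformally invariant in dimension four, while $|d^\nabla u|^2\,dV$ and the self-interaction term scale by harmless positive powers of $r_1$ (the constant $\lambda$ gets replaced by $r_1^2\lambda$). So it is enough to prove the case $r_1 = 1$, namely $\sup_{B_{1/2}} f \leq C \int_{B_1} f\, dV$ under the hypothesis $\int_{B_1} f\, dV \leq \epsilon_0$, and then scale back to obtain the $r_1^{-4}$ factor.

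For the Bochner inequality, I would apply (\ref{Bochner}) to $R^\nabla$ and $d^\nabla u$, using the Yang-Mills-Higgs equations (\ref{YMH}) to rewrite $\delta^\nabla R^\nabla$ and $\delta^\nabla d^\nabla u$. Taking pointwise inner products with $R^\nabla$ and $d^\nabla u$ respectively, invoking the pointwise bound $|u| \leq 1$ from Lemma \ref{L^infty} to control the Higgs source terms, and applying Kato's inequality, one arrives at
\begin{equation*}
-\Delta f \leq C_1 f^{3/2} + C_2 f,
\end{equation*}
where $C_1, C_2$ depend on $(M,\lambda)$. The cubic nonlinearity $f^{3/2}$ is the essential feature: it sits exactly at the critical scaling for the Sobolev embedding $W^{1,2}\hookrightarrow L^4$ in dimension four.

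With the subsolution inequality in hand, I would run the standard Moser iteration: test against $\eta^2 f^{p-1}$ for a cutoff $\eta$ and $p \geq 1$, integrate by parts, and combine Sobolev with H\"older to obtain a reverse-H\"older improvement of the form
\begin{equation*}
\|f\|_{L^{2p}(B_{\rho'})}^{p} \leq \frac{C p^2}{(\rho-\rho')^2}\|f\|_{L^p(B_\rho)}^{p}\bigl(1+\|f\|_{L^2(B_\rho)}^{1/2}\bigr),
\end{equation*}
valid for $1/2 \leq \rho' < \rho \leq 1$, and then iterate with $p_k = 2^k$ and geometric radii. The main obstacle, and the whole point of the smallness hypothesis, is absorbing the critical term $f^{3/2}$: the H\"older step produces a factor $\|f\|_{L^2}^{1/2}$ which, by one more application of Sobolev and H\"older, is bounded by $\|f\|_{L^1}^{1/4}\|f\|_{L^4}^{1/4}$. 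The $L^4$ piece can only be absorbed into the Dirichlet part of the left-hand side when $\epsilon_0$ is chosen smaller than a universal multiple of the Sobolev constant on $M$; this is precisely what fixes the threshold $\epsilon_0 = \epsilon_0(M)$ in the statement. Once this absorption is performed and the iteration closes, the resulting bound $\|f\|_{L^\infty(B_{1/2})} \leq C\|f\|_{L^1(B_1)}$ combined with the rescaling yields the lemma.
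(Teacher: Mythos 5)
Your proposal is correct in outline, but it follows a genuinely different route from the paper. The paper does not run a Moser iteration on the critical inequality $\Delta f\le C(f^{3/2}+f)$; instead it uses a point-picking (Schoen-type) blow-up argument: it chooses $r_0<r_1$ maximizing $(r_1-r)^4\sup_{D_r}(|R^\nabla|^2+|d^\nabla u|^2)$, rescales about the point where this supremum is attained by the factor $\rho_0=e_0^{-1/4}$, and shows by contradiction with the smallness hypothesis that the rescaled density $e_{\rho_0}=|R^{\tilde\nabla}|^2+\rho_0^2|d^{\tilde\nabla}\tilde u|^2$ is bounded by $16$; this pointwise bound linearizes the Bochner inequality to $\Delta e_{\rho_0}\le Ce_{\rho_0}$, after which the mean-value (Harnack) inequality for subsolutions of a linear equation gives both the contradiction and, after a second rescaling centered at an arbitrary $x_2\in B_{r_1/2}$, the stated estimate. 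Your route replaces the point-picking by direct absorption of the critical term $f^{3/2}=f^{1/2}\cdot f$ (a potential $f^{1/2}$ in $L^{n/2}$ with small norm) into the Sobolev term; this is the classical Uhlenbeck/Schoen--Uhlenbeck style proof, and it has the merit of showing exactly how $\epsilon_0$ is fixed by the Sobolev constant of $M$. What the paper's argument buys is that it only ever invokes elliptic theory for the linear inequality $\Delta e\le Ce$ with bounded coefficients, thereby sidestepping the two delicate points in your sketch: (i) the density $|R^\nabla|^2+|d^\nabla u|^2$ does not rescale homogeneously ($|R^{\tilde\nabla}|^2=r_1^4|R^\nabla|^2$ but $|d^{\tilde\nabla}\tilde u|^2=r_1^2|d^\nabla u|^2$), so the subsolution inequality and the iteration must be carried out for the weighted quantity $|R^{\tilde\nabla}|^2+r_1^2|d^{\tilde\nabla}\tilde u|^2$ --- your phrase ``harmless positive powers of $r_1$'' conceals the check that the cross term $|R^{\tilde\nabla}|\,|d^{\tilde\nabla}\tilde u|^2$ is still dominated by $\tilde f^{3/2}$ after reweighting (it is, precisely because of the weight); and (ii) closing the iteration with only an $L^1$ quantity on the right requires the standard interpolation and absorption-over-radii trick, since the test function $\eta^2f^{p-1}$ degenerates at $p=1$. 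Neither point is a genuine gap, but both deserve a sentence in a complete write-up.
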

\begin{proof}
	There exists $r_0<r_1$, such that
	\begin{align*}
		(r_1-r_0)^4\sup_{D_{r_0}(x_0)}(|R^\triangledown|^2+|d^\triangledown u|^2)=\sup_{0\le r\le r_1}((r_1-r)^4\sup_{D_r(x_0)}(|R^\triangledown|^2+|d^\triangledown u|^2))
	\end{align*}
	where $D_r \left( x \right)=\partial B_r\left( x \right) $. Then there exists $x_1\in D_{r_0}(x_0)$ such that
	\begin{align*}
		(|R^\triangledown|^2+|d^\triangledown u|^2)(x_1)=\sup_{D_{r_0}(x_0)}(|R^\triangledown|^2+|d^\triangledown u|^2).
	\end{align*}
	Define $e_0=(|R^\triangledown|^2+|d^\triangledown u|^2)(x_1)$. We claim that $e_0\le16(r_1-r_0)^{-4}$. In fact, if we suppose $e_0>16(r_1-r_0)^{-4}$, then $\rho_0=e_0^{-\frac14}<\frac{r_1-r_0}2$. Define
	\begin{align*}
		\rho:B_1(0)\to B_{\rho_0}(x_1)
	\end{align*}
	such that $\rho(x)=x_1+\rho_0 x$, and 
	\begin{align*}
		&\tilde A(x)=\rho^\ast A(x)=\rho_0A(x_1+\rho_0x),\\
		&\tilde u(x)=\rho^\ast u(x)=u(x_1+\rho_0 x).
	\end{align*}
	Let $\widetilde\nabla=d+\tilde A$ be a connection of $\rho^\ast E$ over $B_1(0)$ and $e_{\rho_0}=|R^{\tilde\triangledown}|^2+\rho_0^2|d^{\tilde\triangledown}\tilde u|^2=\rho_0^4(|R^\triangledown|^2+|d^\triangledown u|^2)$. Note that $B_{\rho_0}(x_1)\subset B_{\frac{r_0+r_1}2}(x_0)$, we have
	\begin{align*}
		1=e_{\rho_0}(0)&\le\sup_{B_1(0)}e_{\rho_0}
		\\
		&=\rho_0^4\sup_{B_{\rho_0}(x_1)}(|R^\triangledown|^2+|d^\triangledown u|^2)\\
		&\le\rho_0^4\sup_{B_{\frac{r_0+r_1}2}(x_0)}(|R^\triangledown|^2+|d^\triangledown u|^2)\\
		&=\rho_0^4(\frac{r_1-r_0}2)^{-4}(r_1-\frac{r_0+r_1}2)^4\sup_{B_{\frac{r_0+r_1}2}(x_0)}(|R^\triangledown|^2+|d^\triangledown u|^2)\\
		&\le\rho_0^4(\frac{r_1-r_0}2)^{-4}(r_1-r_0)^4e_0\\
		&=16,
	\end{align*}
	which implies $|R^{\tilde\triangledown}|^2+\rho_0^2|d^{\tilde\triangledown}\tilde u|^2\le 16$ on $B_1(0)$. By equation (\ref{YMH}) and the Bochner formula, we have
	\begin{align*}
		\nabla^\ast\nabla R^\triangledown&=d^\triangledown \delta^\triangledown R^\triangledown-R^\triangledown\circ(Ric\wedge I+2R_M)+R^\triangledown\# R^\triangledown\\
		&=d^\triangledown(d^\triangledown u\#u)-R^\triangledown\circ(Ric\wedge I+2R_M)+R^\triangledown\# R^\triangledown\\
		&=R^\triangledown\#R^\triangledown\#u+d^\triangledown u\#d^\triangledown u-R^\triangledown(Ric\wedge I+2R_M)+R^\triangledown\#R^\triangledown,
	\end{align*}
	and
	\begin{align*}
		\nabla^\ast\nabla d^\triangledown u&=d^\triangledown\delta^\triangledown d^\triangledown u+\delta^\triangledown d^\triangledown d^\triangledown u-d^\triangledown u\circ Ric+R^\triangledown\#d^\triangledown u\\
		&=d^\triangledown(\frac\lambda2(1-|u|^2)u)+\delta^\triangledown(R^\triangledown\cdot u)-d^\triangledown u\circ Ric+R^\triangledown\#d^\triangledown u\\
		&=d^\triangledown u\#u\#u+R^\triangledown\#d^\triangledown u-d^\triangledown u\circ Ric.
	\end{align*}
	Hence
	\begin{align*}
		\Delta e_{\rho_0}&=\rho_0^6\Delta(|R^\triangledown|^2+|d^\triangledown u|^2)\\
		&=2\rho_0^6(\langle  \nabla^\ast\nabla R^\triangledown,R^\triangledown \rangle  -|\nabla R^\triangledown|^2+\langle  \nabla^\ast\nabla d^\triangledown u,d^\triangledown u \rangle  -|\nabla d^\triangledown u|^2)\\
		&\le C\rho_0^6(|R^\triangledown|^3+|R^\triangledown||d^\triangledown u|^2+|R^\triangledown|^2+|d^\triangledown u|^2)\\
		&\le C(|R^{\tilde\triangledown}|^3+\rho_0^2|R^{\tilde\triangledown}||d^{\tilde\triangledown}\tilde u|^2+\rho_0^2|R^{\tilde\triangledown}|^2+\rho_0^4|d^{\tilde\triangledown}\tilde u|^2)\\
		&\le Ce_{\rho_0}.
	\end{align*}
	By Harnack inequality and note that $B_{\rho_0}(x_1)\subset B_R(x_0)$, we have
	\begin{align*}
		1=e_{\rho_0}(0)&\le C\int_{B_1(0)}e_{\rho_0}dV\\
		&=C\int_{B_{\rho_0}(x_1)}|R^\triangledown|^2+|d^\triangledown u|^2dV\\
		&\le C\epsilon_0.
	\end{align*}
	It is a contradiction if $\epsilon_0$ small enough and we prove the claim.
	\\Then we have
	\begin{align*}
		\sup_{B_{\frac{3r_1}4}(x_0)}(|R^\triangledown|^2+|d^\triangledown u|^2)&=(\frac{r_1}4)^{-4}(r_1-\frac{3r_1}4)^4\sup_{B_{\frac{3r_1}4}(x_0)}(|R^\triangledown|^2+|d^\triangledown u|^2)\\
		&\le (\frac{r_1}4)^{-4}(r_1-r_0)^4e_0\\
		&\le Cr_1^{-4},
	\end{align*}
    where, more precisely, $C=2^{12}$. For any $x_2\in B_{\frac{r_1}2}(x_0)$, define
    \begin{align*}
    	\hat A(x)&=\frac{r_1}4A(x_2+\frac{r_1}4x),\\
    	\hat u(x)&=u(x_2+\frac{r_1}4x).
    \end{align*}
    Similarly, $\hat e=|R^{\hat\triangledown}|^2+(\frac{r_1}4)^2|d^{\hat \triangledown}\hat u|^2=(\frac{r_1}4)^4(|R^\triangledown|^2+|d^\triangledown u|^2)$ satisfies $\hat e\le C$ and hence $\Delta\hat e\le C\hat e$ in $B_1(0)$. By Harnack inequality we have
    \begin{align*}
    	\hat e(0)\le C\int_{B_1(0)}\hat e(x)dV=C'\int_{B_{\frac{r_1}4}(x_2)}|R^\triangledown|^2+|d^\triangledown u|^2dV\le C\int_{B_{r_1}(x_0)}|R^\triangledown|^2+|d^\triangledown u|^2dV.
    \end{align*}
    Then we prove the lemma.
\end{proof}
In order to obtain the $\epsilon$-regularity of high order derivatives, we need the following lemma.
\begin{yl}[\cite{KK1}, Theorem 1.3]\label{Lpcur}
	Assume $\nabla=d+A$ is a connection over $B_1$. There exists $\kappa$ and $c$ such that if $\|R^\triangledown\|_{L^2(B_1)}\le\kappa$, then there exists a gauge $g$ such that for any $p\ge2$, there is $d^\ast A^g=0$ and $\|A^g\|_{W^{1,p}(B_1)}\le c\|R^\triangledown\|_{L^p(B_1)}$, where $\kappa$ and $c$ is only determined by $dim(M)$.
\end{yl}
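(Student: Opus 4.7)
The statement is Uhlenbeck's classical Coulomb gauge fixing theorem, so the natural approach is the continuity method from her original paper. The plan is to define the set
$$\mathcal{I} = \{t \in [0,1] : \text{the conclusion holds for the connection } d + tA\}$$
and show $\mathcal{I} = [0,1]$; applying this at $t=1$ recovers the lemma. Since $[0,1]$ is connected, it suffices to verify that $\mathcal{I}$ is non-empty, open, and closed. Non-emptiness is immediate: $0 \in \mathcal{I}$ because $d+0\cdot A = d$ is already in Coulomb gauge with $A \equiv 0$ and any constant $c$.

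For openness, suppose $t_0 \in \mathcal{I}$ with gauge $g_0$ yielding $A_0 = (t_0 A)^{g_0}$. I would set up the map
$$\mathcal{F}(t,\sigma) = d^{\ast}\bigl((tA)^{g_0 \exp(\sigma)}\bigr), \qquad \sigma \in W^{2,p}(B_1,\mathfrak{g}_E)$$
(with a suitable boundary normalization to remove the constant ambiguity in $\sigma$). A direct computation shows $\partial_\sigma \mathcal{F}(t_0,0) = \Delta + $ lower-order terms that are controlled by $A_0$, which is already small by the a priori bound at $t_0$. This operator is an isomorphism onto its natural target, so the implicit function theorem produces a continuous family of gauges $g_t$ for $t$ near $t_0$; the $W^{1,p}$ estimate is then inherited by continuity, giving openness.

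The closedness step is where the real work lies. Given $t_n \to t_\ast$ with $t_n \in \mathcal{I}$ and Coulomb gauges $A_n = (t_n A)^{g_n}$, combining $R^{t_n A} = dA_n + A_n \wedge A_n$ with $d^{\ast} A_n = 0$ gives the Hodge system $(d \oplus d^\ast) A_n = R^{t_n A} - A_n \wedge A_n$ (after suitable boundary conditions). Standard elliptic estimates yield
$$\|A_n\|_{W^{1,p}(B_1)} \le c_0 \bigl(\|R^{t_n A}\|_{L^p(B_1)} + \|A_n \wedge A_n\|_{L^p(B_1)}\bigr),$$
and the Sobolev embedding $W^{1,p} \hookrightarrow L^{2p}$ (valid on $B_1 \subset \mathbb{R}^4$ for $p \ge 2$) provides $\|A_n \wedge A_n\|_{L^p} \le C \|A_n\|_{W^{1,p}}^2$. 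Once $\|A_n\|_{W^{1,p}}$ is small enough that $C c_0 \|A_n\|_{W^{1,p}} < \tfrac12$, the quadratic term gets absorbed into the left-hand side and one obtains $\|A_n\|_{W^{1,p}} \le 2 c_0 \|R^{t_n A}\|_{L^p}$ uniformly in $n$. Weak $W^{1,p}$ compactness, Rellich, and a diagonal argument extract a limit gauge $g_\ast$ realizing $t_\ast \in \mathcal{I}$.

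The main obstacle is making the absorption step legitimate: in dimension four the embedding $W^{1,2} \hookrightarrow L^4$ is critical, so the quadratic term $\|A \wedge A\|_{L^2}$ is precisely of the same scaling as $\|A\|_{W^{1,2}}^2$, and the constant in the Sobolev inequality dictates the explicit $\kappa$. The standard remedy is to start the bootstrap at $p = 2$, where the smallness $\|R^\nabla\|_{L^2} \le \kappa$ directly controls $\|A_n\|_{W^{1,2}}$; once that is established, higher values of $p$ follow by iteration, using Calder\'on--Zygmund estimates for $\Delta A_n = -d^{\ast}(A_n \wedge A_n) + d^{\ast} R^{t_n A}$ and the fact that $A_n \in L^q$ for progressively larger $q$ makes the nonlinearity subcritical. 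Threading this borderline Sobolev bootstrap through the continuity argument, so that the a priori bound is preserved along the family and guarantees $\mathcal{I}$ is closed, is the delicate technical core of the proof.
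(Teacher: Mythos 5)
The paper does not prove this lemma at all: it is quoted verbatim as Theorem 1.3 of Uhlenbeck's paper \cite{KK1} (``Connections with $L^p$ bounds on curvature'') and used as a black box, so the only meaningful comparison is with the proof in that reference. Your overall strategy --- the continuity method, openness via the implicit function theorem applied to $\sigma\mapsto d^\ast\bigl(A^{g_0\exp(\sigma)}\bigr)$ with Neumann-type boundary normalization, and closedness via the elliptic estimate for $d\oplus d^\ast$ with absorption of the quadratic term at the critical exponent --- is exactly the architecture of Uhlenbeck's argument, and your identification of $W^{1,2}\hookrightarrow L^4$ as the borderline step that fixes $\kappa$ is correct.

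There is, however, one genuine gap in the argument as you have set it up: the interpolation path $d+tA$ does not preserve the hypothesis of the theorem. Its curvature is $R^{d+tA}=t\,dA+t^2A\wedge A=tR^{\nabla}+(t^2-t)A\wedge A$, so $\|R^{d+tA}\|_{L^2(B_1)}$ is not controlled by $\|R^{\nabla}\|_{L^2(B_1)}\le\kappa$ and can be arbitrarily large at intermediate $t$. Since both the openness step (smallness of $A_0$ in the linearization) and the closedness step (the a priori bound $\|A_n\|_{W^{1,2}}\le 2c_0\|R^{t_nA}\|_{L^2}\le 2c_0\kappa$, which is what creates the gap needed to keep the absorption constant below $\tfrac12$) rely on the curvature staying below $\kappa$ along the whole path, the argument does not close up as written. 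Uhlenbeck's resolution is to connect $\nabla$ to the trivial connection by the dilation path $A_t(x)=tA(tx)$ rather than by $tA$: then $R^{A_t}(x)=t^2R^{A}(tx)$ and, by the scale invariance of the critical norm in dimension $n=4$, $\|R^{A_t}\|_{L^{2}(B_1)}=\|R^{A}\|_{L^{2}(B_t)}\le\|R^{A}\|_{L^{2}(B_1)}\le\kappa$ for every $t\in(0,1]$, so the hypothesis (and hence the a priori estimate) persists along the family; a density/approximation step for non-smooth $A$ completes the proof. Replacing your path by this one, the rest of your outline goes through and reproduces the cited theorem.
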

\begin{remark}
	By letting $p\to\infty$ we have $\|A^g\|_{L^\infty(B_1)}\le c\|R^\triangledown\|_{L^\infty(B_1)}$. This is consist with Theorem 2.7 in \cite{KK2}.
\end{remark}
\begin{yl}[$\epsilon$-regularity of high order derivative]\label{C^kreg}
	Assume $(\nabla,u)$ is a Yang-Mills-Higgs pair. There exists $\epsilon_1=\epsilon_1(M)$ such that if for some $R<i(M)$ and $x_0\in M$,
	\begin{align*}
		\int_{B_R(x_0)}|R^\triangledown|^2+|d^\triangledown u|^2dV\le \epsilon_1,
	\end{align*}
    then there exists a gauge transformation $g$ such that for any $r_2<R$ and $k\ge1$, we have
	\begin{displaymath}
		\sup_{B_{\frac{r_2}2}(x_0)}(|d^k A^g|^2+|d^ku^g|^2)\le C(k,r_2).
	\end{displaymath}
\end{yl}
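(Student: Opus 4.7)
The strategy is to combine the pointwise curvature bound from Lemma \ref{regularity}, the Coulomb gauge supplied by Lemma \ref{Lpcur}, and standard elliptic bootstrapping on the pair $(A^g,u^g)$. Fix $r_2<R$ and choose $\epsilon_1\le\epsilon_0$ small enough that, after the standard rescaling $y=r_2^{-1}(x-x_0)$, the hypothesis $\|R^\triangledown\|_{L^2}\le\kappa$ of Lemma \ref{Lpcur} is satisfied. Lemma \ref{regularity} then gives the pointwise bound $|R^\triangledown|^2+|d^\triangledown u|^2\le C(r_2)$ on $B_{3r_2/4}(x_0)$, while the preceding $L^\infty$ estimate already gives $|u|\le 1$. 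Applying Lemma \ref{Lpcur} on (the rescaled) $B_{3r_2/4}(x_0)$ produces a gauge transformation $g$ with $d^*A^g=0$ and $\|A^g\|_{W^{1,p}}\le c\|R^\triangledown\|_{L^p}$ for every $p\ge 2$; in particular $A^g\in L^\infty$ on this ball, and so $u^g\in W^{1,\infty}$ as well.

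In this Coulomb gauge the Yang-Mills-Higgs system \eqref{YMH} becomes a semilinear elliptic system for $(A^g,u^g)$. Using $R^\triangledown=dA^g+\tfrac12[A^g\wedge A^g]$ together with the Coulomb condition $d^*A^g=0$, the first equation of \eqref{YMH} rewrites schematically as
\begin{equation*}
\Delta A^g \;=\; A^g\ast dA^g+A^g\ast A^g\ast A^g+u^g\ast d^{\nabla^g} u^g,
\end{equation*}
and the Higgs equation as
\begin{equation*}
\Delta u^g \;=\; A^g\ast du^g+A^g\ast A^g\ast u^g+\tfrac{\lambda}{2}(1-|u^g|^2)u^g,
\end{equation*}
where $\ast$ denotes bilinear algebraic contractions whose coefficients depend only on $M$. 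With $A^g\in W^{1,p}$ for every $p<\infty$ and $u^g\in W^{1,\infty}$ on $B_{3r_2/4}(x_0)$, the right hand sides of both equations lie in $L^p$ for every $p$. Interior $L^p$ regularity for $\Delta$ promotes $(A^g,u^g)$ to $W^{2,p}$ on a slightly smaller ball; Sobolev embedding upgrades $dA^g$ and $du^g$ to $L^\infty$; iterating $k$ times on a nested sequence of shrinking radii yields $(A^g,u^g)\in C^k$ on $B_{r_2/2}(x_0)$ with a bound depending only on $k$, $r_2$, $\lambda$ and the geometry of $M$.

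The only step that is genuinely non-routine is the very first one: it is the Coulomb condition $d^*A^g=0$ that turns the Yang-Mills-Higgs equation into an elliptic equation for $A^g$ \emph{itself}, rather than merely for its curvature $R^\triangledown$, and this is what allows the bootstrap to recover derivatives of $A^g$. Everything afterwards is bookkeeping: choosing a nested sequence of radii between $r_2/2$ and $3r_2/4$ so that each interior elliptic estimate is applied on a ball slightly smaller than the one where the previous estimate is valid, and checking that the constants from Lemma \ref{Lpcur} and the standard $L^p$-theory absorb cleanly at every stage.
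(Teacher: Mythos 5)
Your proposal is correct and follows essentially the same route as the paper: fix the Coulomb gauge via Lemma \ref{Lpcur} (with $\epsilon_1$ chosen small enough relative to $\epsilon_0$ and $\kappa$), use Lemma \ref{regularity} to get the pointwise bound on $|R^\triangledown|^2+|d^\triangledown u|^2$, rewrite \eqref{YMH} as the semilinear elliptic system $\Delta A^g+\Phi(A^g,u^g)=0$, $\Delta u^g+\Psi(A^g,u^g)=0$, and bootstrap with interior $L^p$ estimates and Sobolev embedding on a nested sequence of balls. The schematic form of your right-hand sides matches the paper's $\Phi$ and $\Psi$, so no further comment is needed.
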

\begin{proof}
	Choose $\epsilon_1=\min\{\epsilon_0,\kappa^2\}$ and $R_1<R_0$. By lemma \ref{Lpcur}, there exists a gauge transformation $g$, such that $d^\ast A^g=0$. For simplicity, we omit the superscript $g$. Then the equations (\ref{YMH}) are
	\begin{align}\label{YMHloc}
		\begin{split}
			&\Delta A+\Phi(A,u)=0,\\
			&\Delta u+\Psi(A,u)=0,
		\end{split}
	\end{align}
	where $\Delta$ is the covariant Laplacian operator on $M$ and
	\begin{align*}
		\Phi(A,u)&=dA\#A+A\#A\#A+du\#u+A\#u\#u,\\ \Psi(A,u)&=dA\#u+A\#du+A\#A\#u-\frac\lambda2(1-|u|^2)u.
	\end{align*}
	By lemma \ref{Lpcur}, we have $\|A\|_{L^\infty}\le C(R_1)\|R^\triangledown\|_{L^\infty}$ in $B_{\frac{R_1}2}(x_0)$. And by Lemma \ref{regularity}, we have
	\begin{displaymath}
		\sup_{B_{\frac12{R_1}}}(|R^\triangledown|^2+|d^\triangledown u|^2)\le C_1.
	\end{displaymath}
	By $L^p$-estimate, for any $R_2<R_1$ and $p>1$, we have  $\|A\|_{W^{2,p}(B_{\frac12R_2})}+\|u\|_{W^{2,p}(B_{\frac12R_2})}\le C(R_1,R_2,p)$. By Sobolev's embedding theorem, $\|A\|_{C^1(B_{\frac12R_2})}+\|u\|_{C^1(B_{\frac12R_2})}\le C(R_1,R_2)$. Differentiating the equation (\ref{YMHloc}) and repeat the above process, we can prove that for any $k\ge1$ and $R_k<R_{k-1}$, we have $\|A\|_{C^k(B_{\frac12R_2})}+\|u\|_{C^k(B_{\frac12R_2})}\le C(R_1,R_2,...,R_k,k)$. For any $r_2<R$, by choosing $R>R_1>R_2>...>r_2$, the lemma is proved.
\end{proof}
\subsection{The proof of theorem \ref{maintheorem2}}

Assume $\{(\nabla_i,u_i)\}$ is a sequence of Yang-Mills-Higgs pairs with uniformly bounded energy $\mathscr{A}(\nabla_i,u_i)\le K$. Define
\begin{align}
	\Sigma=\{x\in M\mid\lim_{R\to0}\liminf_{i\to\infty}\int_{B_R(x)}|R^{\triangledown_i}|^2dV>\epsilon_1\}.
\end{align}
$\mathscr{A}(\nabla_k,u_k)\le K$ implies that $\Sigma$ is finite and the number of elements in $\Sigma$ is no more than $\frac{K}{\epsilon_1}$. The uniform $W^{1,2}$ bounded of $(\nabla_i,u_i)$ implies there is a subsequence of $\{(\nabla_i,u_i)\}$ which weakly converges to a Yang-Mills-Higgs pair $(\nabla_\infty,u_\infty)$ in $\Omega^1(\mathfrak{g}_E)\times\Omega^0(E)$. And by lemma \ref{C^kreg}, the convergence is smooth in $M\backslash\Sigma$. For any $x_0\in\Sigma$, choose $R_0<i(M)$ such that $B_{R_0}(x_0)\cap\Sigma=\{x_0\}$. Assume $\nabla_i=d+A_i$ on $B_{R_0}(x_0)$. Define
\begin{align*}
	\frac1{(r_i^1)^4}=\sup_{B_{R_0}(x_0)}(|R^{\triangledown_i}|^2+|d^{\triangledown_i}u_i|^2),
\end{align*}
Then $\lim\limits_{i\to\infty}r_i^1=0$. Assume
\begin{align*}
	&\tilde A_{1,i}(x)=\rho_i^\ast A_i(x)=r_i^1A_i(x_0+r_i^1x),\\
	&\tilde u_{1,i}(x)=\rho_i^\ast u_i(x)=u_i(x_0+r_i^1x),
\end{align*}
where $\rho_i(x)=x_0+r_i^1x$ maps $B_{R}(0)$ to $B_{Rr_i^1}(x_0)$ for any $R>0$. The pairs $(\tilde\nabla_{1,i},\tilde u_{1,i})$ satisfy
\begin{align*}
	\begin{split}
		\delta^{\tilde\triangledown_{1,i}}R^{\tilde\triangledown_{1,i}}&=-\frac{(r^1_i)^2}2(d^{\tilde\triangledown_{1,i}}\tilde u_{1,i}\otimes\tilde u_{1,i}^\ast-\tilde u_{1,i}\otimes d^{\tilde\triangledown_{1,i}}\tilde u_{1,i}),\\
		\delta^{\tilde\triangledown_{1,i}}d^{\tilde\triangledown_{1,i}}\tilde u_{1,i}&=\frac{\lambda (r^1_i)^2}2(1-|\tilde u_{1,i}|^2)\tilde u_{1,i}.
	\end{split}
\end{align*}
For any $\delta<R_0$ and $R>0$, $i$ is large enough such that $\delta>4Rr_i^1$. Note that $\|R^{\tilde\triangledown_{1,i}}\|_{L^\infty(B_R(0))}\le 1$. Applying theorem \ref{Higgs bound}, lemma \ref{regularity} and lemma \ref{C^kreg}, there is a subsequence of $(\tilde A_{1,i},\tilde u_{1,i})$ converges to $(\tilde A_{1,\infty,R},\tilde u_{1,\infty,R})$ on $B_R(0)$ smoothly under some gauge transformations $g_i$. For simplicity, we assume that the subsequence is $(\tilde A_{1,i},\tilde u_{1,i})$ itself. By choosing $R_1<R_2<...\to\infty$ and subsequence repeatedly, we may assume $(\tilde A_{1,i}^{g_i},\tilde u_{1,i}^{g_i})$ converges to $(\tilde A_{1,\infty},\tilde u_{1,\infty})$ in $C^\infty(\mathbb{R}^4)$ satisfying
\begin{align*}
	\delta^{\tilde\triangledown_{1,\infty}}R^{\tilde\triangledown_{1,\infty}}&=0,\\
	\delta^{\tilde\triangledown_{1,\infty}}d^{\tilde\triangledown_{1,\infty}}\tilde u_{1,\infty}&=0,
\end{align*} 
which implies $\widetilde\nabla_{1,\infty}$ is a Yang-Mills connection and $d^{\tilde\triangledown_{1,\infty}}\tilde u_{1,\infty}=0$. By Uhlenbeck's removable singularity theorem (see \cite{KK2}, corollary 4.3) , $\tilde\nabla_{1,\infty}$ can be extended to a nontrivial Yang-Mills connection on $S^4$ under some gauge transformation. Define 
$$\mathscr{A}(\nabla,u;\Omega)=\frac12\int_\Omega|R^\triangledown|^2+|d^\triangledown u|^2+\frac\lambda4(1-|u|^2)^2dV.$$
The following lemma gives a sufficient condition for the energy on necks to vanish.
\begin{yl}\label{neck}
Assume $(\nabla_i,u_i)$ are a sequence of Yang-Mills-Higgs pairs and $\lim\limits_{i\to\infty}r_i^1=0$. There exists $\epsilon$, such that for any $R,\delta>0$, if
\begin{align}\label{ring_small}
	\liminf_{i\to\infty}\sup_{r\in(\frac{Rr_i^1}2,2\delta)}\mathscr{A}(\nabla_i,u_i;B_{2r}(x_0)\backslash B_r(x_0))<\epsilon,
\end{align}
then
\begin{align*}
	\lim_{R\to\infty}\lim_{\delta\to0}\liminf_{i\to\infty}\mathscr{A}(\nabla_i,u_i;B_\delta(x_0)\backslash B_{Rr_i^1}(x_0))=0.
\end{align*}
\end{yl}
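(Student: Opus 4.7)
The plan is to dispose of the Higgs contribution to the neck energy using Theorem \ref{Higgs bound} and then handle the curvature by a cylindrical reformulation combined with a three-slab decay argument. Applied at $x_0$, Theorem \ref{Higgs bound} gives
\[
\int_{B_\delta(x_0)}\bigl(|d^{\nabla_i}u_i|^2+\tfrac{\lambda}{4}(1-|u_i|^2)^2\bigr)\,dV \le C(\delta+\delta^4)
\]
uniformly in $i$, so the Higgs part of the neck energy on $B_\delta(x_0)\setminus B_{Rr_i^1}(x_0)$ already vanishes as $\delta\to 0$. It therefore suffices to show that $\int_{B_\delta\setminus B_{Rr_i^1}}|R^{\nabla_i}|^2\,dV$ vanishes in the iterated limit $R\to\infty$, $\delta\to 0$ --- the classical ``no neck'' statement for Yang-Mills in dimension four.

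For this I would pass to cylindrical coordinates via the conformal map $x=x_0+e^{-s}\theta$ with $\theta\in S^3$, turning the neck into $N_i=S^3\times[T_1,T_2]$ where $T_1\approx-\log\delta$ and $T_2\approx-\log(Rr_i^1)$. Because $\int|R^\nabla|^2\,dV$ is conformally invariant in dimension four, hypothesis (\ref{ring_small}) translates to the statement that every unit-length slab in $N_i$ carries curvature energy less than $\epsilon$. Choosing $\epsilon$ below the Uhlenbeck constant of Lemma \ref{Lpcur}, I would fix a Coulomb gauge on each slab, in which the Yang-Mills equation in (\ref{YMH}) becomes an elliptic system for the connection form with Higgs source $-\tfrac12(d^{\nabla_i}u_i\otimes u_i^*-u_i\otimes(d^{\nabla_i}u_i)^*)$ controlled in $L^2$ by Theorem \ref{Higgs bound}. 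A three-slab iteration then yields, for the slab curvature energy $E(s)=\int_{S^3\times[s,s+1]}|R^{\nabla_i}|^2$, the geometric decay
\[
E(s)\le C\bigl(e^{-\alpha(s-T_1)}+e^{-\alpha(T_2-s)}\bigr)
\]
with $\alpha>0$ independent of $i$, $\delta$, $R$. Summing over $s\in[T_1,T_2]$ then bounds the total neck curvature energy by a constant multiple of the curvature energies in the two outermost slabs, both of which vanish in the iterated limit.

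The main obstacle is the three-slab decay estimate. It uses the Yang-Mills equation essentially: in the Coulomb gauge on the cylinder one combines the $\epsilon$-regularity of Lemma \ref{regularity}, the Bochner-Weizenb\"ock formula on $S^3\times\mathbb{R}$, and elliptic regularity to argue that when the two flanking slabs carry little energy, the middle slab energy is reduced by a definite factor --- a manifestation of the fact that, on $S^3\times\mathbb{R}$, the only limit point of small-energy Yang-Mills connections is the trivial flat one. The Higgs source is a new feature compared with the pure Yang-Mills case, but by Theorem \ref{Higgs bound} its slab contribution is of order $e^{-s}$ in the cylindrical variable, and so is easily absorbed into the right-hand side of the decay inequality without affecting the geometric rate.
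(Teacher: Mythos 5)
Your treatment of the Higgs part is correct and coincides with the paper's: Lemma \ref{L^infty} and Lemma \ref{concentration} (i.e.\ Theorem \ref{Higgs bound}) show the Higgs energy on $B_\delta(x_0)$ is $O(\delta+\delta^4)$ uniformly in $i$, so only the curvature on the neck is at issue. For the curvature you propose a genuinely different route from the paper: a conformal transplant to the cylinder $S^3\times[T_1,T_2]$ followed by a three-slab exponential decay estimate. The paper instead works directly on dyadic annuli $\mathfrak{U}_l$, builds Uhlenbeck's broken Hodge gauges with matching conditions on the interface spheres (Lemma \ref{conn-cur}), and integrates by parts on each annulus so that the boundary terms telescope; the neck energy is then bounded by the two extreme boundary integrals plus the Higgs current term $\sum_l\int_{\mathfrak{U}_l}\langle\delta^{\triangledown}R^{\triangledown},A\rangle$, each of which is shown to vanish in the iterated limit. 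Your route, if completed, would give more (exponential decay along the neck rather than just vanishing of the total), but it is also strictly harder to set up.

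The genuine gap is that the three-slab decay estimate --- which is the entire content of the no-neck statement --- is asserted rather than proved. The compactness heuristic you offer (``the only limit point of small-energy Yang-Mills connections on $S^3\times\mathbb{R}$ is the trivial flat one'') rules out nontrivial limits but does not produce a decay factor $e^{-\alpha}$ with $\alpha$ independent of $i$, $\delta$, $R$: the three-slab inequality is a comparison statement between slab energies, and proving it by contradiction requires normalizing by the middle slab energy and controlling the linearized problem, i.e.\ a Poincar\'e/spectral-gap estimate for the connection form on $S^3$ in a gauge that is coherent across the three slabs. Constructing such matched gauges is precisely the technical difficulty that the paper's broken-gauge construction is designed to handle, so this step cannot be treated as a black box. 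Two further points need repair even granting the decay: (i) your conclusion bounds the neck energy by a constant times the energies of the two outermost slabs, and hypothesis (\ref{ring_small}) only makes these $<\epsilon$, not vanishing --- to see that they tend to $0$ in the iterated limit you must first invoke $\epsilon$-regularity (Lemmas \ref{regularity} and \ref{C^kreg}) to extract limits on the punctured ball and on $\mathbb{R}^4\setminus\{0\}$ after rescaling, exactly as the paper does via its Fatou argument for the sphere integrals $\int_{|x|=\delta}|R^{\triangledown_i}|^2dS$; (ii) the Yang-Mills-Higgs equation is not conformally invariant, so the cylinder equation carries an $s$-dependent source, and while your observation that Theorem \ref{Higgs bound} makes this source summable is correct in spirit, it must be fed into the three-slab iteration quantitatively, not merely ``absorbed.''
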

\begin{proof}
	For simplicity, we assume $x_0=0$. For $l\ge-1$, define
	\begin{align*}
		\mathfrak{U}_l&=\{x\mid 2^{-l-1}\le|x|\le2^{-l}\},\\
		S_l&=\{x\mid |x|=2^{-l}\}.
	\end{align*} 
	Divide $A_i$ into the radius part $A_{i,r}$ and the sphere part $A_{i,\theta}$, that is, $A_i=A_{i,r}+A_{i,\theta}$. Recall that by Bochner's formula, the equation (\ref{YMH}) and $\|u_i\|_{L^\infty}\le 1$, we have
	\begin{align*}
		\Delta|R^{\triangledown_i}|^2\le C(M)(|R^{\triangledown_i}|^2+|R^{\triangledown_i}|^3+|R^{\triangledown_i}||D^{\triangledown_i} u_i|^2)-2|d^{\triangledown_i} R^{\triangledown_i}|^2.
	\end{align*}
	Note that
	\begin{align*}
		\Delta(|R^{\triangledown_i}|^2+h)^{\frac12}&=\frac12(|R^{\triangledown_i}|^2+h)^{-\frac12}\Delta|R^{\triangledown_i}|^2+\frac14\sum_j(|R^{\triangledown_i}|^2+h)^{-\frac32}(e_j|R^{\triangledown_i}|^2)^2
	\end{align*}
	and let $h$ tends to $0$, we have
	\begin{align*}
		\Delta|R^{\triangledown_i}|\le C(|R^{\triangledown_i}|+|R^{\triangledown_i}|^2+|d^{\triangledown_i} u_i|^2).
	\end{align*}
	Consider $\tilde A_{i,l}(x)=2^{-l}A_i(2^{-l}x)$, $\tilde u_{i,l}(x)=u_i(2^{-l}x)$ and assume $\tilde\nabla_{i,l}=d+\tilde A_{i,l}$. Note that
	\begin{align*}
		\Delta|R^{\tilde\triangledown_{i,l}}|&=2^{-4l}\Delta|R^{\triangledown_i}|\\
		&\le2^{-4l}C(|R^{\triangledown_i}|+|R^{\triangledown_i}|^2+|d^{\triangledown_i} u_i|^2)\\
		&\le C(2^{-2l}|R^{\tilde\triangledown_{i,l}}|+|R^{\tilde\triangledown_{i,l}}|^2+2^{-2l}|d^{\tilde\triangledown_{i,l}}\tilde u_{i,l}|^2),
	\end{align*}
	and by Harnack's inequality, we have
	\begin{align*}
		\sup_{\mathfrak{U}_l}|R^{\triangledown_i}|&=2^{2l}\sup_{\mathfrak{U}_0}|R^{\tilde\triangledown_{i,l}}|\\
		&\le 2^{2l}C\int_{\mathfrak{U}_{-1}\cup\mathfrak{U}_0\cup\mathfrak{U}_1}2^{-2l}|R^{\tilde\triangledown_{i,l}}|+|R^{\tilde\triangledown_{i,l}}|^2+2^{-2l}|d^{\tilde\triangledown_{i,l}}\tilde u_{i,l}|^2dV\\
		&=C\int_{\mathfrak{U}_{l-1}\cup\mathfrak{U}_l\cup\mathfrak{U}_{l+1}}2^{2l}|R^{\triangledown_i}|+|R^{\triangledown_i}|^2+2^{2l}|d^{\triangledown_i} u_i|^2dV\\
		&\le2^{2l}C\epsilon
	\end{align*}
	Theorem 2.8 and corollary 2.9 in \cite{KK2} show that
	\begin{yl}\label{conn-cur}
		Assume $\nabla=d+A$ be a connection over $\mathfrak{U}_{0}$, there exists $\gamma$ such that if $\|R^\triangledown\|_{L^\infty(\mathfrak{U}_{0})}\le\gamma$, then under a gauge transformation, we have $\delta^\triangledown A=0$ in $\mathfrak{U}_{-1}$, $\delta_\theta^\triangledown A_\theta=0$ on $S_{-1}$ and $S_0$, and $\int_{|x|=r}A_rdS=0$ for any $1\le r\le2$. Moreover, there is $\|A\|_{L^\infty(\mathfrak{U}_0)}\le C\|R^\triangledown\|_{L^\infty(\mathfrak{U}_0)}$ and $\|A\|_{L^2(\mathfrak{U}_0)}\le C\|R^\triangledown\|_{L^2(\mathfrak{U}_0)}$. 
	\end{yl}
	By choosing $\epsilon$ small enough, we may assume $|R^{\tilde\nabla_{i,l}}|\le\gamma$ on $\mathfrak{U}_{0}$ and hence there exists a gauge transformation $g_{i,l}\in\mathscr{G}(\mathfrak{U}_l)$, if denote $A_{i,l}=A_i^{g_{i,l}}$ and $u_{i,l}=u_i^{g_{i,l}}$, we have
	\begin{align*}
		&(1)\ d^\ast A_{i,l}=0,\\
		&(2)\ A_{i,l,\theta}\mid_{S_l}=A_{i,l-1,\theta}\mid_{S_l},\\
		&(3)\ d_\theta^\ast A_{i,l,\theta}=0 \textrm{ on } S_l \textrm{ and } S_{l+1},\\
		&(4)\ \int_{S_l}A_{i,l,r}dS=\int_{S_{l+1}}A_{i,l,r}dS=0.
	\end{align*}
	Choosing $\epsilon$ small enough such that $\{\nabla_i\}$ satisfy the condition of lemma \ref{Lpcur}, we may assume $\int_{\mathfrak{U}_l}|A_{i,l}|^2dV\le 2^{-2l}C\int_{\mathfrak{U}_l}|R^{\triangledown_{i,l}}|^2dV$. Assume 
	\begin{align*}
		\bigcup_{l=l_1+1}^{l_2-1}\mathfrak{U}_l\subset B_\delta\backslash B_{Rr_i^1}\subset\bigcup_{l=l_1}^{l_2}\mathfrak{U}_l.
	\end{align*}
	By Stokes' formula, we have
	\begin{align*}
		\int_{\mathfrak{U}_l}|R^{\triangledown_{i,l}}|^2dV=\int_{\mathfrak{U}_l}<\delta^{\triangledown_{i,l}}R^{\triangledown_{i,l}},A_{i,l}>+A_{i,l}\#A_{i,l}\#R^{\triangledown_{i,l}}dV+\int_{S_l}A_{i,l}\wedge\ast R^{\triangledown_{i,l}}-\int_{S_{l+1}}A_{i,l}\wedge\ast R^{\triangledown_{i,l}}.
	\end{align*}
    Let $\hat A_{i,l}(x)=2^{-l}A_{i,l}(2^{-l}x)$ and assume $\|R^{\hat\triangledown_{i,l}}\|_{L^\infty(\mathfrak{U}_0)}=2^{-2l}\|R^{\triangledown_{i,l}}\|_{L^\infty(\mathfrak{U}_l)}=2^{-2l}\|R^{\triangledown_i}\|_{L^\infty(\mathfrak{U}_l)}\le\gamma$ by choosing $\epsilon$ small enough, where $\gamma$ is the constant in lemma \ref{conn-cur}. Then we have
    \begin{align*}
    	\int_{\mathfrak{U}_l}A_{i,l}\#A_{i,l}\#R^{\triangledown_{i,l}}dV&\le C\|R^{\triangledown_{i,l}}\|_{L^\infty(\mathfrak{U}_l)}\int_{\mathfrak{U}_l}|A_{i,l}|^2dV\\
    	&\le C\|R^{\triangledown_{i,l}}\|_{L^\infty(\mathfrak{U}_l)}2^{-2l}\int_{\mathfrak{U}_0}|\hat A_{i,l}|^2dV\\
    	&\le C2^{2l}\epsilon\cdot2^{-2l}\int_{\mathfrak{U}_0}|R^{\hat\triangledown_{i,l}}|^2dV\\
    	&\le C\epsilon\int_{\mathfrak{U}_l}|R^{\triangledown_{i,l}}|^2dV.
    \end{align*}
    Choose $\epsilon$ small enough such that $C\epsilon\le\frac12$, then
	\begin{align*}
		&\sum_{l=l_1}^{l_2}\int_{\mathfrak{U}_l}|R^{\triangledown_{i,l}}|^2dV\\=&\sum_{l=l_1}^{l_2}\int_{\mathfrak{U}_l}\langle \left( \delta^{\triangledown_{i,l}} R^{\triangledown_{i,l}},A_{i,l} \rangle+A_{i,l}\#A_{i,l}\#R^{\triangledown_{i,l}}  \right)  dV+\int_{S_l}A_{i,l}\wedge\ast R^{\triangledown_{i,l}}-\int_{S_{l+1}}A_{i,l}\wedge\ast R^{\triangledown_{i,l}}\\
		\le&\int_{S_{l_1}}A_{i,l_1}\wedge\ast R^{\triangledown_{i,l_1}}-\int_{S_{l_2+1}}A_{i,l_2}\wedge\ast R^{\triangledown_{i,l_2}}+\sum_{l=l_1}^{l_2}\int_{\mathfrak{U}_l}\langle \delta^{\triangledown_{i,l}} R^{\triangledown_{i,l}},A_{i,l} \rangle  dV+\frac12\sum_{l=l_1}^{l_2}\int_{\mathfrak{U}_l}|R^{\triangledown_{i,l}}|^2dV.
	\end{align*}
    Then we have
    \begin{align*}
    	\int_{B_\delta\backslash B_{Rr_i^1}}|R^{\triangledown_i}|^2dV&\le\sum_{l=l_1}^{l_2}\int_{\mathfrak{U}_l}|R^{\triangledown_{i,l}}|^2dV\\
    	&\le2(\int_{S_{l_1}}A_{i,l_1}\wedge\ast R^{\triangledown_{i,l_1}}-\int_{S_{l_2+1}}A_{i,l_2}\wedge\ast R^{\triangledown_{i,l_2}}+\sum_{l=l_1}^{l_2}\int_{\mathfrak{U}_l}\langle \delta^{\triangledown_{i,l}} R^{\triangledown_{i,l}},A_{i,l} \rangle  dV).
    \end{align*}
	By equation (\ref{YMH}) and H{\"o}lder's inequality, we have
	\begin{align*}
		\sum_{l=l_1}^{l_2}\int_{\mathfrak{U}_l}\langle \delta^{\triangledown_{i,l}}R^{\triangledown_{i,l}},A_{i,l} \rangle  dV&\le\sum_{l=l_1}^{l_2}\|d^{\triangledown_{i,l}}u_{i,l}\|_{L^2(\mathfrak{U}_l)}\|u_{i,l}\|_{L^\infty(\mathfrak{U}_l)}\|A_{i,l}\|_{L^2(\mathfrak{U}_l)}\\
		&\le C\sum_{l=l_1}^{l_2}2^{-l}\|d^{\triangledown_{i,l}}u_{i,l}\|_{L^2(\mathfrak{U}_l)}\|R^\triangledown_{i,l}\|_{L^2(\mathfrak{U}_l)}\\
		&\le 2^{-l_1}C\epsilon,
	\end{align*}
	which tends to 0 as $\delta$ tends to 0 since $2^{-l_1}\le2\delta$. By Fatou's lemma, we have
	\begin{align*}
		\int_0^{R_0}\liminf_{i\to\infty}\int_{|x|=r}|R^{\triangledown_i}|^2dSdr\le K,
	\end{align*}
	hence
	\begin{align*}
		\delta\liminf_{\delta\to0}\int_{|x|=\delta}|R^{\triangledown_i}|^2dS=0
	\end{align*}
	Since $\|A_{i,l}\|_{L^\infty(S_l)}\le 2^{-l}C\|R^{\triangledown_i}\|_{L^\infty(S_l)}\le2^lC\epsilon$, we have
	\begin{align*}
		\int_{S_{l_1}}A_{i,l_1}\wedge\ast R^{\triangledown_{i,l_1}}\le\|A_{i,l_1}\|_{L^2(S_{l_1})}\|R^{\triangledown_{i,l_1}}\|_{L^2(S_{l_1})}\to0
	\end{align*}
	as $\delta\to0$. According to the same argument for $S_{l_2}$, we conclude that
	\begin{align*}
		\lim_{R\to\infty}\lim_{\delta\to0}\liminf_{i\to\infty}\int_{B_\delta\backslash B_{Rr_i^1}}|R^{\triangledown_i}|^2dV=0.
	\end{align*}
	Since lemma \ref{L^infty} and lemma \ref{concentration} shows that Higgs part $|d^{\triangledown_i}u_i|^2+\frac\lambda4(1-|u_i|^2)^2$ has no concentration point, we have
	\begin{align*}
		\lim_{R\to\infty}\lim_{\delta\to0}\liminf_{i\to\infty}\mathscr{A}(\nabla_i,u_i,B_\delta\backslash B_{Rr_i^1})=0.
	\end{align*}
\end{proof}
For any $\delta,R>0$, define $E_i=\{r_i\in(\frac{Rr_i^1}2,2\delta)\mid \mathscr{A}(\nabla_i,u_i;B_{2r_i}(x_0)\backslash B_{r_i}(x_0))\ge\epsilon\}$. If $\cup_{j}\cap_{i>j}E_i\ne\varnothing$, define equivalent classes of $\{(r_1,r_2,...)\mid r_i\in E_i\}$ such that $\{r_i\}$ equals to $\{r_i'\}$ if and only if $\liminf\frac{r_i}{r_i'}>0$ and $\limsup\frac{r_i}{r_i'}<\infty$. And define $\{r_i\}>\{r_i'\}$ if and only if $\liminf\frac{r_i'}{r_i}=0$. Similarly to the estimate of the number of element of $\Sigma$, we have the estimate of the number of equivalent class $L\le\frac{K}{\epsilon}$. Assume the equivalent classes are $\{r_i^1\}<\{r_i^2\}<...<\{r_i^L\}$. Note that
\begin{align*}
	&\mathscr{A}(\nabla_i,u_i;B_\delta(x_0)\backslash B_{Rr_i^1}(x_0))\\
	=&\sum_{j=2}^{L}\mathscr{A}(\nabla_i,u_i;B_{\delta r_i^j}(x_0)\backslash B_{Rr_i^{j-1}}(x_0))+\mathscr{A}(\nabla_i,u_i;B_{Rr_i^j}(x_0)\backslash B_{\delta r_i^j}(x_0))\\
	&+\mathscr{A}(\nabla_i,u_i;B_\delta(x_0)\backslash B_{Rr_i^L}(x_0)).
\end{align*}
For $j=1,...,l$, define
\begin{align*}
	\tilde A_{j,i}(x)&=r_i^jA_i(x_0+r_i^jx),\\
	\tilde u_{j,i}(x)&=u(x_0+r_i^jx).
\end{align*}
Since $\mathscr{A}(\nabla_i,u_i;B_{\delta r_i^j}(x_0)\backslash B_{Rr_i^{j-1}}(x_0))=\mathscr{A}(\widetilde\nabla_{j,i},\tilde u_{j,i};B_\delta(0)\backslash B_{R{r_i^{j-1}}/{r_i^j}}(0))$, where $\lim\limits_{i\to\infty}\frac{r_i^{j-1}}{r_i^j}=0$ and for any $r\in(\frac{Rr_i^{j-1}}{2r_i^j},2\delta)$, we have
\begin{align*}
	\liminf_{i\to\infty}\mathscr{A}(\widetilde\nabla_{j,i},\tilde u_{j,i};B_{2r}(0)\backslash B_r(0))<\epsilon
\end{align*}
(otherwise there exists an equivalent class between $\{r_i^{j-1}\}$ and $\{r_i^j\}$). By lemma \ref{neck}, we have
\begin{align*}
	\lim_{R\to\infty}\lim_{\delta\to0}\liminf_{i\to\infty}\mathscr{A}(\nabla_i,u_i;B_{\delta r_i^j}(x_0)\backslash B_{Rr_i^{j-1}}(x_0))=0.
\end{align*}
Similarly, $\{r_i^L\}$ is the largest equivalent implies $\{(\nabla_i,u_i)\}$ satisfy (\ref{ring_small}) by replacing $\{r_i^1\}$ with $\{r_i^L\}$ and thus 
\begin{align*}
	\lim_{R\to\infty}\lim_{\delta\to0}\liminf_{i\to\infty}\mathscr{A}(\nabla_i,u_i;B_\delta(x_0)\backslash B_{Rr_i^L}(x_0))=0.
\end{align*}
The uniformly $W^{1,2}$ bound of $\widetilde\nabla_{j,i}$ implies that there is a weakly limit $\widetilde\nabla_{j,\infty}$ in $\mathbb{R}^4-\{0\}$ and the removable singularity theorem shows that it could be extended to a Yang-Mills connection over $S^4$. The convergence may be not smooth in $S^4$, and we can repeat the bubble-neck decomposition at each blow-up point as above. This process must stop after finite steps by the uniform energy bound. Hence 
\begin{align*}
	\lim_{R\to\infty}\lim_{\delta\to0}\liminf_{i\to\infty}\mathscr{A}(\nabla_i,u_i;B_{Rr_i^j}(x_0)\backslash B_{\delta r_i^j}(x_0))=YM(\widetilde\nabla_{j,\infty})+\sum_{k=1}^{K_j}YM(\widetilde\nabla_{j,\infty,k}),
\end{align*}
where $\widetilde\nabla_{j,\infty,k}$ are the bubbles of $\widetilde\nabla_{j,\infty}$. For simplicity, we assume $\Sigma=\{x_0\}$. Finally, by choosing a subsequence, we have
\begin{align*}
	\lim_{i\to\infty}\mathscr{A}(\nabla_i,u_i)=&\lim_{\delta\to0}\lim_{i\to\infty}\mathscr{A}(\nabla_i,u_i;M\backslash B_\delta(x_0))+\lim_{R\to\infty}\lim_{i\to\infty}\mathscr{A}(\nabla_i,u_i;B_{Rr_i^1}(x_0))\\
	&+\lim_{R\to\infty}\lim_{\delta\to0}\lim_{i\to\infty}\mathscr{A}(\nabla_i,u_i;B_\delta(x_0)\backslash B_{Rr_i^1}(x_0))\\
	=&\mathscr{A}(\nabla_\infty,u_\infty)+YM(\widetilde\nabla_{1,\infty})+\sum_{j=1}^L(YM(\widetilde\nabla_{j,\infty})+\sum_{k=1}^{K_j}YM(\widetilde\nabla_{j,\infty,k})).
\end{align*}
Then we finish the prove.
\begin{remark}
	If we consider the Higgs fields taking values in $\Omega^0(\mathfrak{g}_E)$, we can get the similar energy identity. 
\end{remark}
 \begin{thm}
	Assume $\{(\nabla_i,\Phi_i)\}$ is a family of Yang-Mills-Higgs pairs  and $\mathscr{A}(\nabla_i,\Phi_i)\le K$, where $\Phi\in\Omega^0(\mathfrak{g}_E)$. Then there is a finite subset $\Sigma=\{x_1,...,x_l\}\subset M$, a Yang-Mills-Higgs pair $(\nabla_\infty,\Phi_\infty)$ on $M\backslash\Sigma$ and Yang-Mills connections  $\{\widetilde\triangledown_{jk}\mid1\le j\le l,1\le k\le K_j\}$ over $S^4$, such that there is a subsequence of $\{(\nabla_i,\Phi_i)\}$ converges to $(\nabla_\infty,\Phi_\infty)$ in $C^\infty_{loc}(M\backslash\Sigma)$ under gauge transformations and
	\begin{equation}
		\lim_{i\to\infty}\mathscr{A}(\nabla_i,\Phi_i)=\mathscr{A}(\nabla_\infty,\Phi_\infty)+\sum_{j=1}^l\sum_{k=1}^{K_j}YM(\widetilde\nabla_{jk}).
	\end{equation}
\end{thm}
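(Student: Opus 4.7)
The plan is to mirror the proof of Theorem \ref{maintheorem2} step by step, replacing the Higgs field $u \in \Omega^0(E)$ with $\Phi \in \Omega^0(\mathfrak{g}_E)$ and verifying that each analytic ingredient survives the change. The overall strategy remains the same: control the Higgs part so that no concentration can occur there, localize all blow-up to the curvature, apply the $\epsilon$-regularity, and run the bubble-neck decomposition.

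First, I would establish the analogues of Lemmas \ref{L^infty}, \ref{concentration}, and Theorem \ref{Higgs bound} for $\Phi$. For the $L^\infty$ bound, the maximum principle applied to $|\Phi|^2$ still works because the Bochner-type computation gives $\Delta|\Phi|^2 = 2\langle \delta^\triangledown d^\triangledown\Phi,\Phi\rangle - 2|d^\triangledown\Phi|^2$, and equation (\ref{YMH2}) yields $\langle \delta^\triangledown d^\triangledown\Phi,\Phi\rangle = \tfrac{\lambda}{2}(1-|\Phi|^2)|\Phi|^2$, so $|\Phi|\le 1$ at an interior maximum. The integration-by-parts estimate for $\|d^\triangledown\Phi\|_{L^2(B_r)}^2$ carries over verbatim after noting that $\delta^\triangledown d^\triangledown\Phi$ retains the same pointwise structure. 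Consequently the Higgs part $|d^\triangledown\Phi|^2 + \tfrac{\lambda}{4}(1-|\Phi|^2)^2$ still has vanishing concentration at every point.

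Next, I would check the $\epsilon$-regularity and the higher-order version (Lemmas \ref{regularity} and \ref{C^kreg}). The Bochner-Weitzenb\"ock identities combined with the Yang-Mills-Higgs equations (\ref{YMH2}) yield $\Delta|R^\triangledown| \le C(|R^\triangledown| + |R^\triangledown|^2 + |d^\triangledown\Phi|^2)$ and a parallel bound for $|d^\triangledown\Phi|$; the only change is that the nonlinearities involving $u$ (like $du\#u\#u$) are replaced by bracket-type terms of the form $[d^\triangledown\Phi,\Phi]$ or $[[\Phi,\cdot],\Phi]$, which admit the same pointwise majorization under $|\Phi|\le 1$. After choosing a Coulomb gauge via Lemma \ref{Lpcur}, the elliptic bootstrap in Lemma \ref{C^kreg} goes through unchanged.

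With these tools, I define the curvature concentration set
\begin{equation*}
\Sigma = \{x\in M \mid \lim_{R\to 0}\liminf_{i\to\infty}\int_{B_R(x)}|R^{\triangledown_i}|^2\,dV > \epsilon_1\},
\end{equation*}
which is finite of cardinality at most $K/\epsilon_1$. Away from $\Sigma$ the smooth $C^\infty_{\mathrm{loc}}$ convergence to some $(\nabla_\infty,\Phi_\infty)$ (modulo gauge) follows from the $\epsilon$-regularity. At each $x_0\in\Sigma$ I rescale by the curvature scale $r_i^j$ defined as in the proof of Theorem \ref{maintheorem2}; since $\Phi$ is not rescaled but the equation acquires factors of $(r_i^j)^2$, the blown-up limit $\widetilde\nabla_{j,\infty}$ is a Yang-Mills connection with $d^{\widetilde\nabla_{j,\infty}}\widetilde\Phi_{j,\infty}=0$, and Uhlenbeck's removable singularity theorem extends it to $S^4$. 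Finally, the neck estimate (Lemma \ref{neck}) transfers: the Stokes-formula integration by parts over annuli $\mathfrak{U}_l$ uses $\delta^\triangledown R^\triangledown = [d^\triangledown\Phi,\Phi]$, and H\"older with $\|\Phi\|_{L^\infty}\le 1$ gives the same bound $\sum_l\int_{\mathfrak{U}_l}\langle\delta^{\triangledown_{i,l}}R^{\triangledown_{i,l}},A_{i,l}\rangle dV \le 2^{-l_1}C\epsilon$ that drives the neck energy to zero, while the Higgs contribution on necks vanishes by the no-concentration property.

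The main obstacle I anticipate is bookkeeping the bracket structure in the neck analysis: one must check that the cross-terms in the rescaled equation (\ref{YMH2}), in particular $[d^{\widetilde\nabla_{j,i}}\widetilde\Phi_{j,i},\widetilde\Phi_{j,i}]$, still contribute negligibly after the scaling $(\widetilde\nabla_{j,i},\widetilde\Phi_{j,i})(x)=( r_i^j A_i(x_0+r_i^jx),\Phi_i(x_0+r_i^jx))$ because of the $L^\infty$ bound on $\Phi$ and the factor $(r_i^j)^2 \to 0$ in the rescaled equation. Once this is confirmed, the iterative bubble-neck decomposition and the energy summation formula go through identically, yielding the claimed energy identity.
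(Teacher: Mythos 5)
Your proposal is correct and follows exactly the route the paper intends: the paper itself proves the $\Omega^0(E)$ case in detail (Theorem \ref{maintheorem2}) and then disposes of the $\Omega^0(\mathfrak{g}_E)$ case with a one-line remark that the argument is analogous, which is precisely the step-by-step transfer you carry out. Your verification that the bracket terms $[d^\triangledown\Phi,\Phi]$ and $[[\cdot,\Phi],\Phi]$ admit the same pointwise majorizations as the tensor terms in \eqref{YMH} (using $\|\Phi\|_{L^\infty}\le 1$), so that the no-concentration lemmas, the $\epsilon$-regularity, the neck estimate, and the bubble-neck decomposition all go through unchanged, is exactly the content the paper leaves implicit.
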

\section*{Conflict interests}

There is no conflict of interest.

\section*{Acknowledgment}
All authors would like to thank Prof. Jiayu Li for his encouragement and constant help. The first author is supported by National Key R$\&$D Program of China 2022YFA1005400 and NFSC No.12031017 and the second author is supported by NSFC No.12001532.


\begin{thebibliography}{30}
	\bibitem{JL} Bourguignon, JP., and Lawson, H.B., ``Stability and isolation phenomena for Yang-Mills fields," \emph{Commun.Math. Phys.} $\mathbf{79}$, 189–230 (1981). 
	\bibitem{Ch} Cheng, D. R., ``Instability of solutions to the Ginzburg-Landau equation on $S^n$ and $\mathbb{CP}^n$," \emph{J. Funct. Anal.} $\mathbf{279}$ (2020).
	\bibitem{DK} Donaldson, S. K., and Kronbeimer, P. B, "The geometry of Four-Manifolds," Oxford (1990)
	\bibitem{HF} Fang, Y., and Hong, MC., ``Heat flow for Yang-Mills-Higgs fields, Part I," \emph{Chin. Ann. of Math.}, $\mathbf{21B: 4}$, 453-472 (2000).
	\bibitem{GS} Gilberg, D., and Trudinger, N. S., ``Elliptic partial differential equations of second order," Springer-Verlag Berlin Heidelberg (2001).
	\bibitem{PHiggs} Higgs Peter, W., ``Spontaneous Symmetry Breakdown without Massless Boson," \emph{Phys. Rev.} $\mathbf{145}$ ,1156-1163 (1966). 
	\bibitem{ML}Hong, MC., and Schabrun, L., ``The energy identity for a sequence of Yang–Mills $\alpha$-connections," \emph{Calc. Var.} $\mathbf{58}$, 83 (2019). 
	\bibitem{HT} Hong, MC., and Tian, G., ``Asymptotical behaviour of the Yang-Mills flow and singular Yang-Mills connections," \emph{Math. Ann.} $\mathbf{330}$, 441–472 (2004). 
	\bibitem{JT} Jaffe, A., and Taubes, C., ``Vortices and monopoles Structure of static gauge theories," Boston: Birkhauser (1980)
	\bibitem{LS} Lawson, H. B., and Simons, J., ``On stable currents and their application to global problems in real and complex geometry," \emph{Ann. Math.}  vol. 98, no.3,  427-450 (1973). 
	\bibitem{NV} Naber, A., and Valtorta, D., ``Energy identity for stationary Yang Mills," \emph{Invent. math.} $\mathbf{216}$, 847–925 (2019). 
	\bibitem{HH} Naito,H., and Kozono,H. and Maeda, Y., ``A Stable Manifold Theorem For The Yang-Mills Gradient Flow," \emph{Tohoku Math. J. (2)}, $\mathbf{42}$, 45-66 (1990). 
	\bibitem{Riviere} Rivi\`{e}re, T., ``Interpolation spaces and energy quantization for Yang-Mills fields," \emph{Commun. Anal. Geom,} $\mathbf{10}$,  683-708 (2002). 
	\bibitem{Song} Song, C., ``Convergence of Yang–Mills–Higgs fields," \emph{Math. Ann.} $\mathbf{366}$, 167–217 (2016). 
	\bibitem{struwe} Struwe, M., ``The Yang-Mills flow in four dimensions," \emph{Calc. Var} $\mathbf{2}$, 123–150 (1994). 
	\bibitem{Tian} Tian, G., ``Gauge theory and calibrated geometry, I," \emph{Math. Ann.} $\mathbf{151}$, 193-268 (2000). 
	\bibitem{KK1} Uhlenbeck, K.K., ``Connections with $L^p$ bounds on curvature," \emph{Commun.Math. Phys.} $\mathbf{83}$, 31–42 (1982). 
	\bibitem{KK2} Uhlenbeck, K.K., ``Removable singularities in Yang-Mills fields," \emph{Commun.Math. Phys.} $\mathbf{83}$, 11–29 (1982). 
\end{thebibliography}
\end{document}